\providecommand{\U}[1]{\protect\rule{.1in}{.1in}}
\providecommand{\U}[1]{\protect\rule{.1in}{.1in}}
\providecommand{\U}[1]{\protect\rule{.1in}{.1in}}
\providecommand{\U}[1]{\protect\rule{.1in}{.1in}}
\providecommand{\U}[1]{\protect\rule{.1in}{.1in}}
\newcommand{\ulambda}{{\boldsymbol{\lambda}}}
\newcommand{\umu}{{\boldsymbol{\mu}}}
\newcommand{\unu}{{\boldsymbol{\nu}}}
\newcommand{\uemptyset }{{\boldsymbol{\emptyset}}}
\newtheorem{Th}{Theorem}[section]
\numberwithin{equation}{section}
\newtheorem{Prop}[Th]{Proposition}
\theoremstyle{remark}
\newtheorem{Rem}[Th]{Remark}{\rmfamily}
\theoremstyle{definition}
\newtheorem{Def}[Th]{Definition}{\rmfamily}
\newtheorem{exa}[Th]{Example}{\rmfamily}
\def\Z{\mathbb{Z}}
\def\Z{{\mathbb Z}}
\newcommand\blfootnote[1]{%
  \begingroup
  \renewcommand\thefootnote{}\footnote{#1}%
  \addtocounter{footnote}{-1}%
  \endgroup
}
\begin{document}

\title{Blocks of Ariki-Koike algebras and level-rank duality}

\author{David Declercq}
\address{Universit\'{e} de Reims Champagne-Ardennes, UFR Sciences exactes et
naturelles. Laboratoire de Math\'{e}matiques UMR CNRS 9008. Moulin de la Housse BP
1039. 51100 Reims. France.}
\email{david.declercq@univ-reims.fr} 

\author{Nicolas Jacon}
\address{Universit\'{e} de Reims Champagne-Ardennes, UFR Sciences exactes et
naturelles. Laboratoire de Math\'{e}matiques UMR CNRS 9008. Moulin de la Housse BP
1039. 51100 Reims. France.}
\email{nicolas.jacon@univ-reims.fr}

\maketitle
\date{}
\blfootnote{\textup{2020} \textit{Mathematics Subject Classification}: \textup{20C08,05E10,20C20}} 
\begin{abstract}
We study the  blocks for Ariki-Koike algebras using a general notion of core for $l$-partitions. We interpret  the action of the affine symmetric group on the blocks in the context of level rank duality and study the orbits under this action. 
\end{abstract}


\section{Introduction}

One of the main (and still open) problem in the  modular representation of the symmetric group  $\mathfrak{S}_n$ (that is over a field $k$ of characteristic $p>0$) 
 is the determination of the decomposition matrix. This matrix describes the decomposition of certain 
  remarkable modules, the Specht modules, into simples via a process of modular reduction. The rows are indexed by the partitions of 
   $n$ (which naturally index the Specht modules)  where as the columns  are indexed by  certain  partitions, named the  $p$-regular partitions (which index the simple modules of $k\mathfrak{S}_n$.)  The decomposition  matrix is in fact a block diagonal matrix and we may try to study these ``blocks'' independently one from  the other.  This leads to fundamental problems which have both combinatorial and geometric flavors:
   \begin{itemize}
   \item The distributions of the  partitions (and the $p$-regular partitions)  into blocks may be described  using the notion of $p$-core and $p$-quotient of a partition. The complexity of the block is measured by  a combinatorial datum, the {weight} of the block.
   
   \item There is a strong relation between some of the blocks by the works and Scopes \cite{Sc}  and Chuang-Rouquier \cite{CR}.  In particular, an action of the affine symmetric group on the set of blocks induces derived equivalences between the categories  associated to these blocks. In some cases, these equivalences are Morita equivalences and the associated block decomposition matrices are just the same.     
   \end{itemize}
   
   This paper is concerned with  the Ariki-Koike algebra which may be seen as a generalization of the group algebra of the symmetric group. It is in fact a deformation of the complex reflection group of type $G(l,1,n)$.  This algebra has  been intensively studied during the past decades (see \cite{Ar, GJ}).  In particular, we still have a notion of  decomposition matrix in this case.  The analogues of the Specht modules are 
    this time indexed by $l$-tuple of partitions (``$l$-partitions''), and the set of $e$-regular partitions by the so called Uglov $l$-partitions. A generalized version  of the notion of weight has been provided  by Fayers in \cite{Fa} and the distribution into blocks has been studied by Lyle and Mathas in \cite{LM}. 
    More recently, a notion of Scopes equivalence has been studied by  Dell'Arciprete
\cite{AD}, Lyle \cite{Ly}, Webster \cite{We}  (in a more general context) and in \cite{LiT}.   In \cite{JL}, the second author together with C.Lecouvey have introduced a generalization of the notion of $p$-core partitions in this context and show that some of the results known in the case of the modular representation theory of the symmetric group are still verified in the context of Ariki-Koike algebras.

The aim of this paper is to  go deeper in the study of the notion of blocks using this generalized notion of core.  We describe precisely the action of the affine symmetric group and we develop the combinatorics of blocks  using this notion.  In particular, generalizing works by Gerber and Lecouvey \cite{LG}, we interpret the induced action on the Uglov $l$-partitions in the context of level-rank duality studied by Uglov \cite{U} and Yvonne \cite{Y}, and show that this action may be obtained using the notion of crystal isomorphism as studied in \cite{JL}.   Hence, the equivalence between two blocks can be interpreted as a crystal isomorphism between two Fock spaces associated with two different multicharges. 
    We also show that the notion of Scopes equivalence can be easily settled  in this context using the notion of core.   Contrary to the case of the symmetric group, two blocks with the same weight may not be in the same orbit. We then study  and classify the different orbits under the action.  We will see that the generalized notion of core nicely simplifies the different classical properties of blocks.
    
    The paper will be organized as follows. First, we describe the combinatorial notions which are needed in this paper: this includes  the notion of partition, symbol, abacus, core and quotient (in the case of the symmetric group).  The third part 
     first generalizes this notion in the context of Ariki-Koike algebras. We in particular introduce the concept of level rank duality. This section contains several new results on the generalized cores (see Proposition \ref{rank} and \ref{core}). In the fourth and fifth part, we study the blocks of Ariki-Koike algebras and the action of the affine Weyl group together with  the analogue of Scopes equivalence (see Proposition \ref{scopes}) and we interpret this action using the level-rank duality in Theorem \ref{duality}.  The last part is finally devoted to the classification of the orbits under this action.

\section{Partitions, symbols and abaci}
In this part, we introduce the combinatorial notions needed to deal with the modular representation theory of Ariki-Koike algebras and to introduce the notion of blocks. 

\subsection{Partitions} By definition, a {\it partition} of $n$  is a sequence of non increasing positive integers $(\lambda_1,\ldots,\lambda_r)$ of total sum $n$.  
We denote by:
\begin{itemize}
\item  $\Pi^l$ the set of all $l$-partitions, that is, the set of all $l$-tuples $(\lambda^{0},\ldots,\lambda^{l-1})$ of partitions.
\item $\Pi=\Pi^1$ the set of all partitions.
\end{itemize}
For any subset $E$ of $\Pi^l$ and any $n\in\Z_{\geq0}$,
we denote $E(n)$ the set of $l$-partitions in $E$  with total size $n$.

\subsection{Abaci and symbols}    Let $m\in \mathbb{Z}$. 
By definition,  a {\it $\beta$-set} or {\it symbol}  of {\it charge}  $m\in \mathbb{Z}$  is an infinite sequence of integers 
 $X=(\beta_i)_{i<m}$ such that:
 \begin{enumerate}
 \item For all $i< m$, we have $\beta_{i-1}<\beta_i$ (that is $X$ is a strictly increasing sequence)
 \item There exists $N< m$ such that for all $j\leq N$, we have $\beta_j=j$.
 \end{enumerate}
 We denote by $\operatorname{Symb} (m)$ the set of symbols of charge $m$.
   If $m\in \mathbb{Z}$ then the symbol  $X=(\beta_i)_{i<m}$ such that $\beta_i=i$ for all $i<m$ is called the {\it trivial symbol} of charge $m$.
 
If $X\in \operatorname{Symb} (m)$ and if $r\in \mathbb{Z}$, we write $X[r]$ for the symbol 
 $(\gamma)_{j< r+m}\in \operatorname{Symb} (m+r) $  such that for all $j<r+m$, we have $\gamma_j=\beta_{j-r}+r$.  We write 
 $$X\simeq X[r].$$
In this way, we obtain  an equivalence relation on the set of symbols and each equivalence class has at  most  one element 
  with a fixed charge.

A symbol may be conveniently represented using its abacus configuration. We associate to a symbol $X$
  an horizontal  runner 
  full  of (an infinite number of ) beads numbered in $\mathbb{Z}$. A bead numbered by $a\in \mathbb{Z}$  is colored in black if and only if $a\in X$. The others are written in white. 
\begin{exa}
The abacus associated to $X=(\ldots,-3,-2,-1,3,5,6)$ is
        \begin{center}
\begin{tikzpicture}[scale=0.5, bb/.style={draw,circle,fill,minimum size=2.5mm,inner sep=0pt,outer sep=0pt}, wb/.style={draw,circle,fill=white,minimum size=2.5mm,inner sep=0pt,outer sep=0pt}]
	
	\node [] at (11,-1) {18};
	\node [] at (10,-1) {17};
	\node [] at (9,-1) {16};
	\node [] at (8,-1) {15};
	\node [] at (7,-1) {14};
	\node [] at (6,-1) {13};
	\node [] at (5,-1) {12};
	\node [] at (4,-1) {11};
	\node [] at (3,-1) {10};
	\node [] at (2,-1) {9};
	\node [] at (1,-1) {8};
	\node [] at (0,-1) {7};
	\node [] at (-1,-1) {6};
	\node [] at (-2,-1) {5};
	\node [] at (-3,-1) {4};
	\node [] at (-4,-1) {3};
	\node [] at (-5,-1) {2};
	\node [] at (-6,-1) {1};
	\node [] at (-7,-1) {0};
	\node [] at (-8,-1) {-1};
	\node [] at (-9,-1) {-2};

	\node [wb] at (11,0) {};
	\node [wb] at (10,0) {};
	\node [wb] at (9,0) {};
	\node [wb] at (8,0) {};
	\node [wb] at (7,0) {};
	\node [wb] at (6,0) {};
	\node [wb] at (5,0) {};
	\node [wb] at (4,0) {};
	\node [wb] at (3,0) {};
	\node [wb] at (2,0) {};
	\node [wb] at (1,0) {};
	\node [wb] at (0,0) {};
	\node [bb] at (-1,0) {};
	\node [bb] at (-2,0) {};
	\node [wb] at (-3,0) {};
	\node [bb] at (-4,0) {};
	\node [wb] at (-5,0) {};
	\node [wb] at (-6,0) {};
	\node [wb] at (-7,0) {};
	\node [bb] at (-8,0) {};
	\node [bb] at (-9,0) {};
		\draw (-10,0) node[]{$\ldots$};
	\end{tikzpicture}
\end{center}

\end{exa}
The charge of a symbol  $X$ may be conveniently read in the associated abacus as follows. For each black bead, if there exists a white bead at its left then replace the leftmost one with a black bead and the black bead itself with a white bead. We obtain the abacus of a trivial symbol. The charge of  $X$ is then the charge of the trivial symbol, that is  the number associated with the leftmost white bead. 
\begin{exa}
Keeping the above example, if we perform the above manipulation, we obtain 
        \begin{center}
\begin{tikzpicture}[scale=0.5, bb/.style={draw,circle,fill,minimum size=2.5mm,inner sep=0pt,outer sep=0pt}, wb/.style={draw,circle,fill=white,minimum size=2.5mm,inner sep=0pt,outer sep=0pt}]
	
	\node [] at (11,-1) {18};
	\node [] at (10,-1) {17};
	\node [] at (9,-1) {16};
	\node [] at (8,-1) {15};
	\node [] at (7,-1) {14};
	\node [] at (6,-1) {13};
	\node [] at (5,-1) {12};
	\node [] at (4,-1) {11};
	\node [] at (3,-1) {10};
	\node [] at (2,-1) {9};
	\node [] at (1,-1) {8};
	\node [] at (0,-1) {7};
	\node [] at (-1,-1) {6};
	\node [] at (-2,-1) {5};
	\node [] at (-3,-1) {4};
	\node [] at (-4,-1) {3};
	\node [] at (-5,-1) {2};
	\node [] at (-6,-1) {1};
	\node [] at (-7,-1) {0};
	\node [] at (-8,-1) {-1};
	\node [] at (-9,-1) {-2};

	\node [wb] at (11,0) {};
	\node [wb] at (10,0) {};
	\node [wb] at (9,0) {};
	\node [wb] at (8,0) {};
	\node [wb] at (7,0) {};
	\node [wb] at (6,0) {};
	\node [wb] at (5,0) {};
	\node [wb] at (4,0) {};
	\node [wb] at (3,0) {};
	\node [wb] at (2,0) {};
	\node [wb] at (1,0) {};
	\node [wb] at (0,0) {};
	\node [wb] at (-1,0) {};
	\node [wb] at (-2,0) {};
	\node [wb] at (-3,0) {};
	\node [wb] at (-4,0) {};
	\node [bb] at (-5,0) {};
	\node [bb] at (-6,0) {};
	\node [bb] at (-7,0) {};
	\node [bb] at (-8,0) {};
	\node [bb] at (-9,0) {};
		\draw (-10,0) node[]{$\ldots$};
	\end{tikzpicture}
\end{center}
which is the abacus of the trivial symbol of charge $3$. 

\end{exa}

\subsection{$l$-Symbol}

An {\it $l$-symbol} is a collection of $l$  symbols:
 $${\bf X}=(X^0,\ldots,X^{l-1})$$
The {\it multicharge} (or $l$-charge) of the symbol is the $l$-tuple $(m_0,\ldots,m_{l-1})\in \mathbb{Z}^l$ where for all $j=0,\ldots,l-1$, the number $m_j$ is the charge of $X^j =(\beta^j_i)_{i< m_j}$.  It will sometimes be  convenient in the following to write such a symbol from bottom to top, taking into account the  gaps between the charges of each symbol  (see the example below).  $${\bf X}=\left(\begin{array}{c}
 X^{l-1}\\
 \vdots\\
 X^0
 \end{array} \right)$$
If ${\bf X}$ is an $l$-symbol and if $r\in \mathbb{N}$, we write ${\bf X}[r]$ for the symbol 
  $(X^0[r],\ldots,X^{l-1}[r])$. Again,  we write ${\bf X}\simeq {\bf X}[r]$. 
 
\begin{exa}
Let us consider the $3$-symbol: 
$${\bf X}=((\ldots,-1,0,2,4,6),(,\ldots,-1,0,3,4),(\ldots,-1,0,2,5)).$$
 Then we can write it as follows:
 $${\bf X}=\left(\begin{array}{cccccc}
\ldots & -1 & 0 & 2 & 5 \\
 \ldots & -1 & 0 & 3 & 4 \\
  \ldots & -1 & 0 & 2 & 4 & 6 
 \end{array} \right)$$
Then  the associated multicharge is $(4,3,3)$. 
The $3$-symbol 
$${\bf Y}=((\ldots,0,1,2,3,5,7,9),(\ldots,0,1,2,3,6,7),(\ldots,0,1,2,3,5,8))$$
 has multicharge $(7,6,6)$ and we have ${\bf Y}={\bf X} [3]$ so that 
${\bf X} \simeq {\bf Y}$. 

\end{exa}

An {\it $l$-symbol} ${\bf X}=(X^0,\ldots,X^{l-1})$ can be conveniently represented using its abacus configuration. In this way, we associate to each $X^j$ from $j=0$ to $l-1$   an abacus as above and we  write them  from bottom to top so that the beads in the same column are numbered by the same integer.   We call the associated object an {\it $l$-abacus}.
  \begin{exa}\label{firstabacus}
  Let $l=3$ and let us consider the following $3$-symbol:
  $$X=((\ldots,-1,0,2,4,6),(\ldots,-1,0,3,4),(\ldots,-1,0,2,5)).$$
  The associated $3$-abacus is:
\begin{center}
\begin{tikzpicture}[scale=0.5, bb/.style={draw,circle,fill,minimum size=2.5mm,inner sep=0pt,outer sep=0pt}, wb/.style={draw,circle,fill=white,minimum size=2.5mm,inner sep=0pt,outer sep=0pt}]
	
	\node [] at (11,-1) {20};
	\node [] at (10,-1) {19};
	\node [] at (9,-1) {18};
	\node [] at (8,-1) {17};
	\node [] at (7,-1) {16};
	\node [] at (6,-1) {15};
	\node [] at (5,-1) {14};
	\node [] at (4,-1) {13};
	\node [] at (3,-1) {12};
	\node [] at (2,-1) {11};
	\node [] at (1,-1) {10};
	\node [] at (0,-1) {9};
	\node [] at (-1,-1) {8};
	\node [] at (-2,-1) {7};
	\node [] at (-3,-1) {6};
	\node [] at (-4,-1) {5};
	\node [] at (-5,-1) {4};
	\node [] at (-6,-1) {3};
	\node [] at (-7,-1) {2};
	\node [] at (-8,-1) {1};
	\node [] at (-9,-1) {0};
	\draw (-10,-1) node[]{$\ldots$};
	
	\node [wb] at (11,0) {};
	\node [wb] at (10,0) {};
	\node [wb] at (9,0) {};
	\node [wb] at (8,0) {};
	\node [wb] at (7,0) {};
	\node [wb] at (6,0) {};
	\node [wb] at (5,0) {};
	\node [wb] at (4,0) {};
	\node [wb] at (3,0) {};
	\node [wb] at (2,0) {};
	\node [wb] at (1,0) {};
	\node [wb] at (0,0) {};
	\node [wb] at (-1,0) {};
	\node [wb] at (-2,0) {};
	\node [bb] at (-3,0) {};
	\node [wb] at (-4,0) {};
	\node [bb] at (-5,0) {};
	\node [wb] at (-6,0) {};
	\node [bb] at (-7,0) {};
	\node [wb] at (-8,0) {};
	\node [bb] at (-9,0) {};
		\draw (-10,0) node[]{$\ldots$};
	
	\node [wb] at (11,1) {};
	\node [wb] at (10,1) {};
	\node [wb] at (9,1) {};
	\node [wb] at (8,1) {};
	\node [wb] at (7,1) {};
	\node [wb] at (6,1) {};
	\node [wb] at (5,1) {};
	\node [wb] at (4,1) {};
	\node [wb] at (3,1) {};
	\node [wb] at (2,1) {};
	\node [wb] at (1,1) {};
	\node [wb] at (0,1) {};
	\node [wb] at (-1,1) {};
	\node [wb] at (-2,1) {};
	\node [wb] at (-3,1) {};
	\node [wb] at (-4,1) {};
	\node [bb] at (-5,1) {};
	\node [bb] at (-6,1) {};
	\node [wb] at (-7,1) {};
	\node [wb] at (-8,1) {};
	\node [bb] at (-9,1) {};
		\draw (-10,1) node[]{$\ldots$};
	
	\node [wb] at (11,2) {};
	\node [wb] at (10,2) {};
	\node [wb] at (9,2) {};
	\node [wb] at (8,2) {};
	\node [wb] at (7,2) {};
	\node [wb] at (6,2) {};
	\node [wb] at (5,2) {};
	\node [wb] at (4,2) {};
	\node [wb] at (3,2) {};
	\node [wb] at (2,2) {};
	\node [wb] at (1,2) {};
	\node [wb] at (0,2) {};
	\node [wb] at (-1,2) {};
	\node [wb] at (-2,2) {};
	\node [wb] at (-3,2) {};
	\node [bb] at (-4,2) {};
	\node [wb] at (-5,2) {};
	\node [wb] at (-6,2) {};
	\node [bb] at (-7,2) {};
	\node [wb] at (-8,2) {};
	\node [bb] at (-9,2) {};
		\draw (-10,2) node[]{$\ldots$};
	\end{tikzpicture}
\end{center}
\end{exa}
We can recover the associated multicharge by moving the black beads at the left in each runner.

\subsection{Symbol of a multipartition}
  To each  symbol $X =(\beta_i)_{i<m}$  (and thus to each abacus)   of charge $m$ we can canonically associate 
  a partition $\lambda (X)=(\lambda_1,\ldots,\lambda_r)$ such that for all $i\geq 1$, we have $\lambda_i=\beta_{m-i}+i-m$. Note that if  $k>>1$ then  $\lambda_k=0$. Regarding the abacus associated to the set of $\beta$-numbers, the parts of the partition are easily obtained by counting the numbers of white beads at the left of each black bead.

        \begin{exa}\label{aba1} Let  $X:=(\ldots,-1,0,3,4,6,8)$. The associated charge is $m=5$.   Then we have $\lambda (X)=(4,3,2,2)$. The abacus configuration gives:
        \begin{center}
\begin{tikzpicture}[scale=0.5, bb/.style={draw,circle,fill,minimum size=2.5mm,inner sep=0pt,outer sep=0pt}, wb/.style={draw,circle,fill=white,minimum size=2.5mm,inner sep=0pt,outer sep=0pt}]
	
	\node [] at (11,-1) {20};
	\node [] at (10,-1) {19};
	\node [] at (9,-1) {18};
	\node [] at (8,-1) {17};
	\node [] at (7,-1) {16};
	\node [] at (6,-1) {15};
	\node [] at (5,-1) {14};
	\node [] at (4,-1) {13};
	\node [] at (3,-1) {12};
	\node [] at (2,-1) {11};
	\node [] at (1,-1) {10};
	\node [] at (0,-1) {9};
	\node [] at (-1,-1) {8};
	\node [] at (-2,-1) {7};
	\node [] at (-3,-1) {6};
	\node [] at (-4,-1) {5};
	\node [] at (-5,-1) {4};
	\node [] at (-6,-1) {3};
	\node [] at (-7,-1) {2};
	\node [] at (-8,-1) {1};
	\node [] at (-9,-1) {0};
		\draw (-10,-1) node[]{$\ldots$};

	\node [wb] at (11,0) {};
	\node [wb] at (10,0) {};
	\node [wb] at (9,0) {};
	\node [wb] at (8,0) {};
	\node [wb] at (7,0) {};
	\node [wb] at (6,0) {};
	\node [wb] at (5,0) {};
	\node [wb] at (4,0) {};
	\node [wb] at (3,0) {};
	\node [wb] at (2,0) {};
	\node [wb] at (1,0) {};
	\node [wb] at (0,0) {};
	\node [bb] at (-1,0) {};
	\node [wb] at (-2,0) {};
	\node [bb] at (-3,0) {};
	\node [wb] at (-4,0) {};
	\node [bb] at (-5,0) {};
	\node [bb] at (-6,0) {};
	\node [wb] at (-7,0) {};
	\node [wb] at (-8,0) {};
	\node [bb] at (-9,0) {};
			\draw (-10,0) node[]{$\ldots$};
	\end{tikzpicture}
\end{center}

        \end{exa}
  It is easy to see that $\lambda (X)=\lambda (Y)$ if and only if $X \simeq Y$. 
    Conversely, to any partition $(\lambda_1,\ldots,\lambda_r)$, we can associate a set of $\beta$-numbers  (and thus an abacus). Let  $m\in \mathbb{Z}$.  Then we define:  
  $$X^m (\lambda) =(\beta_i)_{i< m}$$
  where  for all $j\geq 1$, we have $\beta_{m-j}=\lambda_j-j+m$ (with the convention that $\lambda_k=0$ if $k>r$).  
  Note that for all $(m_1,m_2)\in \mathbb{Z}^2$, we have $X^{m_1} (\lambda)\simeq 
  X^{m_2} (\lambda)$.

  To each $l$-symbol ${\bf X}=(X^0,\ldots,X^{l-1})$, we can associate an $l$-partition $\ulambda ({\bf X})=(\lambda (X^0),\ldots,\lambda (X^{l-1}))$ attached to this symbol  together with a multicharge ${\bf s} (X)\in \mathbb{Z}^l$ which is the multicharge of the symbol. Reciprocally, to each  multipartition   \footnote{The notation $\ulambda^l$ has not to be confused with $\lambda^l$ which is the $l$th component of $\ulambda$. It will be convenient to use this notation in the following.}  $\ulambda^l=(\lambda^0,\ldots,\lambda^{l-1})$ and multicharge ${\bf s}^l=(s_0,\ldots,s_{l-1})$, one can attach an $l$-symbol ${\bf X}^{{\bf s}^l} (\ulambda^l)=(X^{s_0} (\lambda^0),\ldots,X^{s_{l-1}} (\lambda^{l-1}))$. 
 
         \begin{exa}\label{A1} Let  $X=((,\ldots,0,3,4), (,\ldots,0,1,2,5))$ then we have  $\lambda (X)=((2,2),(2))$ and ${\bf s} (X)=(3,4)$.  The abacus configuration gives:
         
         \begin{center}
\begin{tikzpicture}[scale=0.5, bb/.style={draw,circle,fill,minimum size=2.5mm,inner sep=0pt,outer sep=0pt}, wb/.style={draw,circle,fill=white,minimum size=2.5mm,inner sep=0pt,outer sep=0pt}]
	
	\node [] at (11,-1) {20};
	\node [] at (10,-1) {19};
	\node [] at (9,-1) {18};
	\node [] at (8,-1) {17};
	\node [] at (7,-1) {16};
	\node [] at (6,-1) {15};
	\node [] at (5,-1) {14};
	\node [] at (4,-1) {13};
	\node [] at (3,-1) {12};
	\node [] at (2,-1) {11};
	\node [] at (1,-1) {10};
	\node [] at (0,-1) {9};
	\node [] at (-1,-1) {8};
	\node [] at (-2,-1) {7};
	\node [] at (-3,-1) {6};
	\node [] at (-4,-1) {5};
	\node [] at (-5,-1) {4};
	\node [] at (-6,-1) {3};
	\node [] at (-7,-1) {2};
	\node [] at (-8,-1) {1};
	\node [] at (-9,-1) {0};
			\draw (-10,-1) node[]{$\ldots$};

	\node [wb] at (11,0) {};
	\node [wb] at (10,0) {};
	\node [wb] at (9,0) {};
	\node [wb] at (8,0) {};
	\node [wb] at (7,0) {};
	\node [wb] at (6,0) {};
	\node [wb] at (5,0) {};
	\node [wb] at (4,0) {};
	\node [wb] at (3,0) {};
	\node [wb] at (2,0) {};
	\node [wb] at (1,0) {};
	\node [wb] at (0,0) {};
	\node [wb] at (-1,0) {};
	\node [wb] at (-2,0) {};
	\node [wb] at (-3,0) {};
	\node [wb] at (-4,0) {};
	\node [bb] at (-5,0) {};
	\node [bb] at (-6,0) {};
	\node [wb] at (-7,0) {};
	\node [wb] at (-8,0) {};
	\node [bb] at (-9,0) {};
				\draw (-10,0) node[]{$\ldots$};
	
	\node [wb] at (11,1) {};
	\node [wb] at (10,1) {};11
	\node [wb] at (9,1) {};
	\node [wb] at (8,1) {};
	\node [wb] at (7,1) {};
	\node [wb] at (6,1) {};
	\node [wb] at (5,1) {};
	\node [wb] at (4,1) {};
	\node [wb] at (3,1) {};
	\node [wb] at (2,1) {};
	\node [wb] at (1,1) {};
	\node [wb] at (0,1) {};
	\node [wb] at (-1,1) {};
	\node [wb] at (-2,1) {};
	\node [wb] at (-3,1) {};
	\node [bb] at (-4,1) {};
	\node [wb] at (-5,1) {};
	\node [wb] at (-6,1) {};
	\node [bb] at (-7,1) {};
	\node [bb] at (-8,1) {};
	\node [bb] at (-9,1) {};
				\draw (-10,1) node[]{$\ldots$};

	\end{tikzpicture}

\end{center}

        \end{exa}

\subsection{Core and quotient}  
With all  these notions in mind, we will now be able to define the core and the quotient of a partition.  We fix $m\in \mathbb{Z}$. 
 Importantly, our notion of $e$-quotient  will depend  on this choice. 
Let $e\in \mathbb{N}$ and let $n\in \mathbb{N}$ and let 
 $\lambda \in \Pi (n)$.

First recall that   we  can  associate to $\lambda$ a 
  set of $\beta$-numbers $X^m (\lambda)$.  As usual, we take 
    $$X^m (\lambda) =(\beta_i)_{i< m}).$$
We then define:
  $${\bf X}^m (\lambda)=(X^0,\ldots,X^{e-1}),$$
  where for all $j\in \{0,\ldots,e-1\}$, $X^j$ is the  set of increasing integers obtained by reordering the set :
  $$\{ k\in \mathbb{N}\ |\ j+ke \in X^m (\lambda)\}.$$
 We denote 
  $$\mathbb{Z}^e [m]:=\{ (s_0,\ldots,s_{e-1}) \in \mathbb{Z}^e\ |\ \sum_{1\leq i\leq e-1} s_i=m\}.$$ 
Let $\ulambda_e$ be the multipartition and ${\bf s}_e:=(s_0,\ldots,s_{e-1}) \in \mathbb{Z}^e [m]$ be\footnote{the notation $\ulambda_e$ has not be be confused with the $e$th part of a partition. It is convenient to use this notation because of the level rank duality which will be define later.}  such that ${\bf X}^{{\bf s}_e} (\ulambda_e)={\bf X}^m (\lambda)$.  We denote 
$$\tau_e (\lambda) =(\ulambda_e,{\bf s}_e).$$   
We will use the following definitions:
\begin{itemize}
\item  The $e$-partition $\ulambda_e$ is the {\it $e$-quotient} of $(\lambda,m)$. 
\item Let  $\lambda^{\circ}$ be  such that ${\bf  X}^m (\lambda^{\circ})={\bf X}^{{\bf s}_e} (\uemptyset)$.   It is known as the {\it $e$-core partition} of $(\lambda,m)$.  
\item   The pair $(\lambda^{\circ},m)\in \Pi \times \mathbb{Z}$ is the {\it $e$-core} of $(\lambda,m)$ (this notations seems to be strange but it  will be convenient to use it in order to be consistent with our generalizations later in the paper). 
\item We see that the datum of the $e$-core partition (and $m$)  is equivalent to the datum of 
 ${\bf s}_e$. It will be in fact convenient for our purpose to also name the multicharge ${\bf s}_e\in \mathbb{Z}^e [m]$  itself as 
  the {\it $e$-core multicharge} of $\lambda$ and this is what will be done in the rest of the paper.  
  \end{itemize}
  In fact, the notion of $e$-core partition does not depend on $m$ but the $e$-quotient does.

\begin{exa}
Let $\lambda=(6,3,2,1,1)$.  Take $e=3$.  We have:
$$X^0(\lambda)=(\ldots,-7,-6,-4,-3,-1,1,5).$$
Then  the associated abacus is given as follows:
        \begin{center}
\begin{tikzpicture}[scale=0.5, bb/.style={draw,circle,fill,minimum size=2.5mm,inner sep=0pt,outer sep=0pt}, wb/.style={draw,circle,fill=white,minimum size=2.5mm,inner sep=0pt,outer sep=0pt}]
	
	\node [] at (11,-1) {10};
	\node [] at (10,-1) {9};
	\node [] at (9,-1) {8};
	\node [] at (8,-1) {7};
	\node [] at (7,-1) {6};
	\node [] at (6,-1) {5};
	\node [] at (5,-1) {4};
	\node [] at (4,-1) {3};
	\node [] at (3,-1) {2};
	\node [] at (2,-1) {1};
	\node [] at (1,-1) {0};
	\node [] at (0,-1) {-1};
	\node [] at (-1,-1) {-2};
	\node [] at (-2,-1) {-3};
	\node [] at (-3,-1) {-4};
	\node [] at (-4,-1) {-5};
	\node [] at (-5,-1) {-6};
	\node [] at (-6,-1) {-7};
	\node [] at (-7,-1) {-8};
	\node [] at (-8,-1) {-9};
	\node [] at (-9,-1) {-10};
			\draw (-10,-1) node[]{$\ldots$};

	\node [wb] at (11,0) {};
	\node [wb] at (10,0) {};
	\node [wb] at (9,0) {};
	\node [wb] at (8,0) {};
	\node [wb] at (7,0) {};
	\node [bb] at (6,0) {};
	\node [wb] at (5,0) {};
	\node [wb] at (4,0) {};
	\node [wb] at (3,0) {};
	\node [bb] at (2,0) {};
	\node [wb] at (1,0) {};
	\node [bb] at (0,0) {};
	\node [wb] at (-1,0) {};
	\node [bb] at (-2,0) {};
	\node [bb] at (-3,0) {};
	\node [wb] at (-4,0) {};
	\node [bb] at (-5,0) {};
	\node [bb] at (-6,0) {};
	\node [bb] at (-7,0) {};
	\node [bb] at (-8,0) {};
	\node [bb] at (-9,0) {};
				\draw (-10,0) node[]{$\ldots$};
	\end{tikzpicture}
\end{center}
The associated $3$-abacus is :
\begin{center}
\begin{tikzpicture}[scale=0.5, bb/.style={draw,circle,fill,minimum size=2.5mm,inner sep=0pt,outer sep=0pt}, wb/.style={draw,circle,fill=white,minimum size=2.5mm,inner sep=0pt,outer sep=0pt}]
	
	\node [] at (11,-1) {10};
	\node [] at (10,-1) {9};
	\node [] at (9,-1) {8};
	\node [] at (8,-1) {7};
	\node [] at (7,-1) {6};
	\node [] at (6,-1) {5};
	\node [] at (5,-1) {4};
	\node [] at (4,-1) {3};
	\node [] at (3,-1) {2};
	\node [] at (2,-1) {1};
	\node [] at (1,-1) {0};
	\node [] at (0,-1) {-1};
	\node [] at (-1,-1) {-2};
	\node [] at (-2,-1) {-3};
	\node [] at (-3,-1) {-4};
	\node [] at (-4,-1) {-5};
	\node [] at (-5,-1) {-6};
	\node [] at (-6,-1) {-7};
	\node [] at (-7,-1) {-8};
	\node [] at (-8,-1) {-9};
	\node [] at (-9,-1) {-10};
			\draw (-10,-1) node[]{$\ldots$};
	
	\node [wb] at (11,0) {};
	\node [wb] at (10,0) {};
	\node [wb] at (9,0) {};
	\node [wb] at (8,0) {};
	\node [wb] at (7,0) {};
	\node [wb] at (6,0) {};
	\node [wb] at (5,0) {};
	\node [wb] at (4,0) {};
	\node [wb] at (3,0) {};
	\node [wb] at (2,0) {};
	\node [wb] at (1,0) {};
	\node [bb] at (0,0) {};
	\node [bb] at (-1,0) {};
	\node [bb] at (-2,0) {};
	\node [bb] at (-3,0) {};
	\node [bb] at (-4,0) {};
	\node [bb] at (-5,0) {};
	\node [bb] at (-6,0) {};
	\node [bb] at (-7,0) {};
	\node [bb] at (-8,0) {};
	\node [bb] at (-9,0) {};
				\draw (-10,0) node[]{$\ldots$};
	
	\node [wb] at (11,1) {};
	\node [wb] at (10,1) {};
	\node [wb] at (9,1) {};
	\node [wb] at (8,1) {};
	\node [wb] at (7,1) {};
	\node [wb] at (6,1) {};
	\node [wb] at (5,1) {};
	\node [wb] at (4,1) {};
	\node [wb] at (3,1) {};
	\node [wb] at (2,1) {};
	\node [bb] at (1,1) {};
	\node [wb] at (0,1) {};
	\node [wb] at (-1,1) {};
	\node [bb] at (-2,1) {};
	\node [bb] at (-3,1) {};
	\node [bb] at (-4,1) {};
	\node [bb] at (-5,1) {};
	\node [bb] at (-6,1) {};
	\node [bb] at (-7,1) {};
	\node [bb] at (-8,1) {};
	\node [bb] at (-9,1) {};
				\draw (-10,1) node[]{$\ldots$};
	
	\node [wb] at (11,2) {};
	\node [wb] at (10,2) {};
	\node [wb] at (9,2) {};
	\node [wb] at (8,2) {};
	\node [wb] at (7,2) {};
	\node [wb] at (6,2) {};
	\node [wb] at (5,2) {};
	\node [wb] at (4,2) {};
	\node [wb] at (3,2) {};
	\node [bb] at (2,2) {};
	\node [wb] at (1,2) {};
	\node [bb] at (0,2) {};
	\node [bb] at (-1,2) {};
	\node [bb] at (-2,2) {};
	\node [bb] at (-3,2) {};
	\node [bb] at (-4,2) {};
	\node [bb] at (-5,2) {};
	\node [bb] at (-6,2) {};
	\node [bb] at (-7,2) {};
	\node [bb] at (-8,2) {};
	\node [bb] at (-9,2) {};
				\draw (-10,2) node[]{$\ldots$};
	\end{tikzpicture}
\end{center}
Each runner of the above $3$-abacus corresponds to an abacus which itself corresponds to a partition. The $3$-partition associated to this is the $3$-quotient: 
$$(\emptyset,(2),(1)).$$
To obtain the associated $3$-core, we  move all the black beads at the left end of each runner, we obtain the following $3$-abacus:
\begin{center}
\begin{tikzpicture}[scale=0.5, bb/.style={draw,circle,fill,minimum size=2.5mm,inner sep=0pt,outer sep=0pt}, wb/.style={draw,circle,fill=white,minimum size=2.5mm,inner sep=0pt,outer sep=0pt}]
	
	\node [] at (11,-1) {10};
	\node [] at (10,-1) {9};
	\node [] at (9,-1) {8};
	\node [] at (8,-1) {7};
	\node [] at (7,-1) {6};
	\node [] at (6,-1) {5};
	\node [] at (5,-1) {4};
	\node [] at (4,-1) {3};
	\node [] at (3,-1) {2};
	\node [] at (2,-1) {1};
	\node [] at (1,-1) {0};
	\node [] at (0,-1) {-1};
	\node [] at (-1,-1) {-2};
	\node [] at (-2,-1) {-3};
	\node [] at (-3,-1) {-4};
	\node [] at (-4,-1) {-5};
	\node [] at (-5,-1) {-6};
	\node [] at (-6,-1) {-7};
	\node [] at (-7,-1) {-8};
	\node [] at (-8,-1) {-9};
	\node [] at (-9,-1) {-10};
			\draw (-10,-1) node[]{$\ldots$};

	\node [wb] at (11,0) {};
	\node [wb] at (10,0) {};
	\node [wb] at (9,0) {};
	\node [wb] at (8,0) {};
	\node [wb] at (7,0) {};
	\node [wb] at (6,0) {};
	\node [wb] at (5,0) {};
	\node [wb] at (4,0) {};
	\node [wb] at (3,0) {};
	\node [wb] at (2,0) {};
	\node [wb] at (1,0) {};
	\node [bb] at (0,0) {};
	\node [bb] at (-1,0) {};
	\node [bb] at (-2,0) {};
	\node [bb] at (-3,0) {};
	\node [bb] at (-4,0) {};
	\node [bb] at (-5,0) {};
	\node [bb] at (-6,0) {};
	\node [bb] at (-7,0) {};
	\node [bb] at (-8,0) {};
	\node [bb] at (-9,0) {};
				\draw (-10,0) node[]{$\ldots$};

	\node [wb] at (11,1) {};
	\node [wb] at (10,1) {};
	\node [wb] at (9,1) {};
	\node [wb] at (8,1) {};
	\node [wb] at (7,1) {};
	\node [wb] at (6,1) {};
	\node [wb] at (5,1) {};
	\node [wb] at (4,1) {};
	\node [wb] at (3,1) {};
	\node [wb] at (2,1) {};
	\node [wb] at (1,1) {};
	\node [wb] at (0,1) {};
	\node [bb] at (-1,1) {};
	\node [bb] at (-2,1) {};
	\node [bb] at (-3,1) {};
	\node [bb] at (-4,1) {};
	\node [bb] at (-5,1) {};
	\node [bb] at (-6,1) {};
	\node [bb] at (-7,1) {};
	\node [bb] at (-8,1) {};
	\node [bb] at (-9,1) {};
				\draw (-10,1) node[]{$\ldots$};
	
	\node [wb] at (11,2) {};
	\node [wb] at (10,2) {};
	\node [wb] at (9,2) {};
	\node [wb] at (8,2) {};
	\node [wb] at (7,2) {};
	\node [wb] at (6,2) {};
	\node [wb] at (5,2) {};
	\node [wb] at (4,2) {};
	\node [wb] at (3,2) {};
	\node [wb] at (2,2) {};
	\node [bb] at (1,2) {};
	\node [bb] at (0,2) {};
	\node [bb] at (-1,2) {};
	\node [bb] at (-2,2) {};
	\node [bb] at (-3,2) {};
	\node [bb] at (-4,2) {};
	\node [bb] at (-5,2) {};
	\node [bb] at (-6,2) {};
	\node [bb] at (-7,2) {};
	\node [bb] at (-8,2) {};
	\node [bb] at (-9,2) {};
				\draw (-10,2) node[]{$\ldots$};
	\end{tikzpicture}
\end{center}
We obtain the multicharge $(0,-1,1)$ which is thus the $e$-core multicharge of $(\lambda,0)$. We then come back to the $1$-abacus associated to this $3$-abacus :
        \begin{center}
\begin{tikzpicture}[scale=0.5, bb/.style={draw,circle,fill,minimum size=2.5mm,inner sep=0pt,outer sep=0pt}, wb/.style={draw,circle,fill=white,minimum size=2.5mm,inner sep=0pt,outer sep=0pt}]
	\node [] at (11,-1) {10};
	\node [] at (10,-1) {9};
	\node [] at (9,-1) {8};
	\node [] at (8,-1) {7};
	\node [] at (7,-1) {6};
	\node [] at (6,-1) {5};
	\node [] at (5,-1) {4};
	\node [] at (4,-1) {3};
	\node [] at (3,-1) {2};
	\node [] at (2,-1) {1};
	\node [] at (1,-1) {0};
	\node [] at (0,-1) {-1};
	\node [] at (-1,-1) {-2};
	\node [] at (-2,-1) {-3};
	\node [] at (-3,-1) {-4};
	\node [] at (-4,-1) {-5};
	\node [] at (-5,-1) {-6};
	\node [] at (-6,-1) {-7};
	\node [] at (-7,-1) {-8};
	\node [] at (-8,-1) {-9};
	\node [] at (-9,-1) {-10};
			\draw (-10,-1) node[]{$\ldots$};

	\node [wb] at (11,0) {};
	\node [wb] at (10,0) {};
	\node [wb] at (9,0) {};
	\node [wb] at (8,0) {};
	\node [wb] at (7,0) {};
	\node [wb] at (6,0) {};
	\node [wb] at (5,0) {};
	\node [wb] at (4,0) {};
	\node [bb] at (3,0) {};
	\node [wb] at (2,0) {};
	\node [wb] at (1,0) {};
	\node [bb] at (0,0) {};
	\node [wb] at (-1,0) {};
	\node [bb] at (-2,0) {};
	\node [bb] at (-3,0) {};
	\node [bb] at (-4,0) {};
	\node [bb] at (-5,0) {};
	\node [bb] at (-6,0) {};
	\node [bb] at (-7,0) {};
	\node [bb] at (-8,0) {};
	\node [bb] at (-9,0) {};
				\draw (-10,0) node[]{$\ldots$};
	\end{tikzpicture}
\end{center}
We have $\lambda^{\circ}=(3,1)$ which is the $e$-core partition of $(\lambda,0)$.  The $e$-core of $(\lambda,0)$ is 
   $(\lambda^{\circ},0)$.

\end{exa}

We say that a partition $\lambda$ is an {\it $e$-core partition} if the $e$-core  partition of $\lambda$ is $\lambda$  itself. We denote by $\operatorname{Cor} (e)$ the set of $e$-core partitions. The discussion above shows that  there is  a bijection between 
$\operatorname{Cor} (e)$ and $\mathbb{Z}^e [m]$.

Given a $m\in \mathbb{Z}$, an $e$-partition $\ulambda=(\lambda^0,\ldots,\lambda^{e-1})$ and an $e$-core $\lambda^{\circ}$ partition (or an element in $\mathbb{Z}^e [m]$), there is a unique pair $(\lambda,m)\in \Pi \times \mathbb{Z}$ with $e$-quotient $\ulambda$ and $e$-core $\lambda^{\circ}$.

\begin{exa}
Take the multicharge ${\bf s}=(0,1,-1)$ (which is of sum $m=0$), the abacus associated  to $X^{{\bf s}} (\emptyset)$ is 
\begin{center}
\begin{tikzpicture}[scale=0.5, bb/.style={draw,circle,fill,minimum size=2.5mm,inner sep=0pt,outer sep=0pt}, wb/.style={draw,circle,fill=white,minimum size=2.5mm,inner sep=0pt,outer sep=0pt}]
	
	\node [] at (11,-1) {10};
	\node [] at (10,-1) {9};
	\node [] at (9,-1) {8};
	\node [] at (8,-1) {7};
	\node [] at (7,-1) {6};
	\node [] at (6,-1) {5};
	\node [] at (5,-1) {4};
	\node [] at (4,-1) {3};
	\node [] at (3,-1) {2};
	\node [] at (2,-1) {1};
	\node [] at (1,-1) {0};
	\node [] at (0,-1) {-1};
	\node [] at (-1,-1) {-2};
	\node [] at (-2,-1) {-3};
	\node [] at (-3,-1) {-4};
	\node [] at (-4,-1) {-5};
	\node [] at (-5,-1) {-6};
	\node [] at (-6,-1) {-7};
	\node [] at (-7,-1) {-8};
	\node [] at (-8,-1) {-9};
	\node [] at (-9,-1) {-10};
			\draw (-10,-1) node[]{$\ldots$};

	\node [wb] at (11,0) {};
	\node [wb] at (10,0) {};
	\node [wb] at (9,0) {};
	\node [wb] at (8,0) {};
	\node [wb] at (7,0) {};
	\node [wb] at (6,0) {};
	\node [wb] at (5,0) {};
	\node [wb] at (4,0) {};
	\node [wb] at (3,0) {};
	\node [wb] at (2,0) {};
	\node [wb] at (1,0) {};
	\node [bb] at (0,0) {};
	\node [bb] at (-1,0) {};
	\node [bb] at (-2,0) {};
	\node [bb] at (-3,0) {};
	\node [bb] at (-4,0) {};
	\node [bb] at (-5,0) {};
	\node [bb] at (-6,0) {};
	\node [bb] at (-7,0) {};
	\node [bb] at (-8,0) {};
	\node [bb] at (-9,0) {};
				\draw (-10,0) node[]{$\ldots$};

	\node [wb] at (11,1) {};
	\node [wb] at (10,1) {};
	\node [wb] at (9,1) {};
	\node [wb] at (8,1) {};
	\node [wb] at (7,1) {};
	\node [wb] at (6,1) {};
	\node [wb] at (5,1) {};
	\node [wb] at (4,1) {};
	\node [wb] at (3,1) {};
	\node [wb] at (2,1) {};
	\node [bb] at (1,1) {};
	\node [bb] at (0,1) {};
	\node [bb] at (-1,1) {};
	\node [bb] at (-2,1) {};
	\node [bb] at (-3,1) {};
	\node [bb] at (-4,1) {};
	\node [bb] at (-5,1) {};
	\node [bb] at (-6,1) {};
	\node [bb] at (-7,1) {};
	\node [bb] at (-8,1) {};
	\node [bb] at (-9,1) {};
				\draw (-10,1) node[]{$\ldots$};
	
	\node [wb] at (11,2) {};
	\node [wb] at (10,2) {};
	\node [wb] at (9,2) {};
	\node [wb] at (8,2) {};
	\node [wb] at (7,2) {};
	\node [wb] at (6,2) {};
	\node [wb] at (5,2) {};
	\node [wb] at (4,2) {};
	\node [wb] at (3,2) {};
	\node [wb] at (2,2) {};
	\node [wb] at (1,2) {};
	\node [wb] at (0,2) {};
	\node [bb] at (-1,2) {};
	\node [bb] at (-2,2) {};
	\node [bb] at (-3,2) {};
	\node [bb] at (-4,2) {};
	\node [bb] at (-5,2) {};
	\node [bb] at (-6,2) {};
	\node [bb] at (-7,2) {};
	\node [bb] at (-8,2) {};
	\node [bb] at (-9,2) {};
				\draw (-10,2) node[]{$\ldots$};
	\end{tikzpicture}
\end{center}
and we have $X(\lambda^{\circ})=X^{{\bf s}} (\emptyset)$ for $\lambda^\circ=(2)$ (which is thus a $3$-core). 
 We are interested to find the partition with $3$-core $(2)$ and $3$-quotient $((2,1),(2),(2,1,1))$. Then, we simply move the black beads in the above abacus to obtain the abacus of ${\bf X}^{(0,1,-1)} ((2,1),(2),(2,1,1))$. We obtain:
 \begin{center}
\begin{tikzpicture}[scale=0.5, bb/.style={draw,circle,fill,minimum size=2.5mm,inner sep=0pt,outer sep=0pt}, wb/.style={draw,circle,fill=white,minimum size=2.5mm,inner sep=0pt,outer sep=0pt}]
	
	\node [] at (11,-1) {10};
	\node [] at (10,-1) {9};
	\node [] at (9,-1) {8};
	\node [] at (8,-1) {7};
	\node [] at (7,-1) {6};
	\node [] at (6,-1) {5};
	\node [] at (5,-1) {4};
	\node [] at (4,-1) {3};
	\node [] at (3,-1) {2};
	\node [] at (2,-1) {1};
	\node [] at (1,-1) {0};
	\node [] at (0,-1) {-1};
	\node [] at (-1,-1) {-2};
	\node [] at (-2,-1) {-3};
	\node [] at (-3,-1) {-4};
	\node [] at (-4,-1) {-5};
	\node [] at (-5,-1) {-6};
	\node [] at (-6,-1) {-7};
	\node [] at (-7,-1) {-8};
	\node [] at (-8,-1) {-9};
	\node [] at (-9,-1) {-10};
			\draw (-10,-1) node[]{$\ldots$};

	\node [wb] at (11,0) {};
	\node [wb] at (10,0) {};
	\node [wb] at (9,0) {};
	\node [wb] at (8,0) {};
	\node [wb] at (7,0) {};
	\node [wb] at (6,0) {};
	\node [wb] at (5,0) {};
	\node [wb] at (4,0) {};
	\node [wb] at (3,0) {};
	\node [bb] at (2,0) {};
	\node [wb] at (1,0) {};
	\node [bb] at (0,0) {};
	\node [wb] at (-1,0) {};
	\node [bb] at (-2,0) {};
	\node [bb] at (-3,0) {};
	\node [bb] at (-4,0) {};
	\node [bb] at (-5,0) {};
	\node [bb] at (-6,0) {};
	\node [bb] at (-7,0) {};
	\node [bb] at (-8,0) {};
	\node [bb] at (-9,0) {};
				\draw (-10,0) node[]{$\ldots$};

	\node [wb] at (11,1) {};
	\node [wb] at (10,1) {};
	\node [wb] at (9,1) {};
	\node [wb] at (8,1) {};
	\node [wb] at (7,1) {};
	\node [wb] at (6,1) {};
	\node [wb] at (5,1) {};
	\node [wb] at (4,1) {};
	\node [bb] at (3,1) {};
	\node [wb] at (2,1) {};
	\node [wb] at (1,1) {};
	\node [bb] at (0,1) {};
	\node [bb] at (-1,1) {};
	\node [bb] at (-2,1) {};
	\node [bb] at (-3,1) {};
	\node [bb] at (-4,1) {};
	\node [bb] at (-5,1) {};
	\node [bb] at (-6,1) {};
	\node [bb] at (-7,1) {};
	\node [bb] at (-8,1) {};
	\node [bb] at (-9,1) {};
				\draw (-10,1) node[]{$\ldots$};
	
	\node [wb] at (11,2) {};
	\node [wb] at (10,2) {};
	\node [wb] at (9,2) {};
	\node [wb] at (8,2) {};
	\node [wb] at (7,2) {};
	\node [wb] at (6,2) {};
	\node [wb] at (5,2) {};
	\node [wb] at (4,2) {};
	\node [wb] at (3,2) {};
	\node [wb] at (2,2) {};
	\node [bb] at (1,2) {};
	\node [wb] at (0,2) {};
	\node [bb] at (-1,2) {};
	\node [bb] at (-2,2) {};
	\node [wb] at (-3,2) {};
	\node [bb] at (-4,2) {};
	\node [bb] at (-5,2) {};
	\node [bb] at (-6,2) {};
	\node [bb] at (-7,2) {};
	\node [bb] at (-8,2) {};
	\node [bb] at (-9,2) {};
				\draw (-10,2) node[]{$\ldots$};
	\end{tikzpicture}
\end{center}
Now, we perform the associated $1$-abacus: 
        \begin{center}
\begin{tikzpicture}[scale=0.5, bb/.style={draw,circle,fill,minimum size=2.5mm,inner sep=0pt,outer sep=0pt}, wb/.style={draw,circle,fill=white,minimum size=2.5mm,inner sep=0pt,outer sep=0pt}]
	\node [] at (11,-1) {7};
	\node [] at (10,-1) {6};
	\node [] at (9,-1) {5};
	\node [] at (8,-1) {4};
	\node [] at (7,-1) {3};
	\node [] at (6,-1) {2};
	\node [] at (5,-1) {1};
	\node [] at (4,-1) {0};
	\node [] at (3,-1) {-1};
	\node [] at (2,-1) {-2};
	\node [] at (1,-1) {-3};
	\node [] at (0,-1) {-4};
	\node [] at (-1,-1) {-5};
	\node [] at (-2,-1) {-6};
	\node [] at (-3,-1) {-7};
	\node [] at (-4,-1) {-8};
	\node [] at (-5,-1) {-9};
	\node [] at (-6,-1) {-10};
	\node [] at (-7,-1) {-11};
	\node [] at (-8,-1) {-12};
	\node [] at (-9,-1) {-13};
			\draw (-10,-1) node[]{$\ldots$};

	\node [bb] at (11,0) {};
	\node [wb] at (10,0) {};
	\node [wb] at (9,0) {};
	\node [wb] at (8,0) {};
	\node [bb] at (7,0) {};
	\node [bb] at (6,0) {};
	\node [wb] at (5,0) {};
	\node [wb] at (4,0) {};
	\node [wb] at (3,0) {};
	\node [bb] at (2,0) {};
	\node [bb] at (1,0) {};
	\node [bb] at (0,0) {};
	\node [bb] at (-1,0) {};
	\node [wb] at (-2,0) {};
	\node [bb] at (-3,0) {};
	\node [bb] at (-4,0) {};
	\node [bb] at (-5,0) {};
	\node [wb] at (-6,0) {};
	\node [bb] at (-7,0) {};
	\node [bb] at (-8,0) {};
	\node [bb] at (-9,0) {};
				\draw (-10,0) node[]{$\ldots$};
	\end{tikzpicture}
\end{center}
which is the abacus of the partition $(8,5,5,2,2,2,2,1,1,1)$. This is the desired partition. 
\end{exa}

\subsection{Nodes}\label{nodes}  Assume that $\ulambda^l$ is an  $l$-partition and fix ${\bf s}^l=(s_0,\ldots,s_{l-1})$.
We can define its Young diagram 
$$[\ulambda^l]:=\{ (a,b,c)  \ | \ a\geq 1,\ c\in \{0,\ldots,l-1\},\ 1\leq b\leq \lambda_a^c\} \subset \mathbb{Z}_{>0}\times 
  \mathbb{Z}_{>0} \times \{0,\ldots,l-1\}.$$
  The elements in $[\ulambda^l]$ are called the {\it nodes} of $\ulambda^l$. 
  The {\it content}  of  a node $\gamma=(a,b,c)$ of $\ulambda^l$ is the element $b-a+s_c$  of $\mathbb{Z}$ and the residue is the content modulo $e\mathbb{Z}$.    
 We say that:
 \begin{itemize}
 \item the node $(a,b,c)\in [\ulambda^l]$ is a {\it removable $j$-node} if $[\ulambda^l]\setminus \{(a,b,c)\}$ is the Young diagram of a well defined $l$-partition and if $b-a+s_c\equiv j+e\mathbb{Z}$ that is, the residue of $(a,b,c)$ is $j+e\mathbb{Z}$.
 \item  $(a,b,c)\in \mathbb{N}^2\times \{0,1,\ldots,l-1\} \setminus  [\ulambda^l]$ is an {\it addable $j$-node}  if $[\ulambda^l]\sqcup  \{(a,b,c)\}$ is the Young diagram of a well defined $l$-partition 
  and if $b-a+s_c\equiv j+e\mathbb{Z}$ that is, the residue of $(a,b,c)$ is $j+e\mathbb{Z}$.
  \end{itemize}
Consider the associated symbol ${\bf X}:={\bf X}^{{\bf s}^l} (\ulambda^l)$ and the associated $l$-abacus. Then note that: 
\begin{itemize}
\item A removable $i$-node $\gamma$ in $\ulambda^l$ in component $c$ is canonically associated to 
 a black bead in the runner $c$  numbered by an element $j$ such that $j-1$ is occupied with a white bead.   We have $j\equiv i+e\mathbb{Z}$. 
  The abacus associated to the  $l$-partition $\umu^l$ obtained by removing this node  is obtained by exchanging the white and the black bead. 
  \item An addable  $i$-node $\gamma$ in $\ulambda^l$ in component $c$ is canonically associated to 
 a white bead in the runner $c$  numbered by an element $j$ such that $j-1$ is occupied with a black bead.  We have $j\equiv i+e\mathbb{Z}$.
  The abacus associated to the  $l$-partition $\umu^l$ obtained by adding this node  is obtained by exchanging the white and the black bead.

\end{itemize}
\begin{exa}
Keep Example \ref{A1} with $\ulambda^l:=((2,2),(2))$ and ${\bf s}^l=(3,4)$ and take $e=4$. We write the Young diagram with the residue in each box associated with its node:
$$(\ \ytableausetup{centertableaux}
\begin{ytableau}
3 & 0  \\ 
2& 1 
\end{ytableau}, \begin{ytableau}
0 & 1  
\end{ytableau}\ ).$$
We have two removable $1$-nodes : $(2,2,0)$ and $(1,2,1)$.  They are associated to two black beads numbered by $5$ in runner $1$ and $3$ in runner $0$. We have two addable $1$-nodes: $(1,3,0)$ and $(3,1,0)$ which are associated to white beads numbered by $1$ and $5$ in runner $0$.  We have also one addable $2$-node $(1,3,1)$ and one addable $3$-node $(2,1,1)$ associated with white beads numbered by $6$ and $3$ in runner $1$. 

\end{exa}

 \section{Multipartitions and combinatorial Level-rank duality}
 We have already seen how one can associate to each partition its $e$-core and its $e$-quotient.  In this part, we introduce a variation of these notions, defined by Uglov \cite{U} and we study the case of $l$-partitions.
 
\subsection{A variant of the notion of core and quotient} In \cite{U}, Uglov has given a variant of the notion of core and quotient.   Assume that $\lambda$ is a partition. First,   assume that we get the set 
$$X^m(\lambda)=(\beta_i)_{i<m}.$$
For all $j<m$, we write 
$$\beta_j=c_j+ed_j+elm_j,$$
where $c_j \in \{0,\ldots,e-1\}$, $d_i \in \{0,\ldots,l-1\}$ and $m_ij\in \mathbb{Z}$. Then we set
 $X^d$ to be   the  set of increasing integers obtained by reordering the set :
  $$\{ c_j+em_j \  |\  d_j=d\}.$$
  We obtain an $l$-symbol 
  $$(X^{l-1},\ldots,X^{0}),$$
  which will be called the {\it Uglov $l$-symbol}.  From this, we can associate to a partition $\lambda$ 
   an $l$-partition $\ulambda^l$   together with an $l$-multicharge ${\bf s}^l\in \mathbb{Z}^l [m]$, and of course, one can do the reverse process.   
We denote $\tau^l (\lambda,m)=(\ulambda^l,{\bf s}^l)$, $\tau^l$ is a thus a  bijection from the set of all partitions 
 to the set $\Pi^l \times \mathbb{Z}^l [m]$. 
  \begin{exa}
  Let $\lambda=(6,3,2,1,1)$ and $l=2$ and $e=3$, we have:
  $$X^0(\lambda)=(\ldots,-7,-6,-4,-3,-1,1,5).$$
  with abacus:
        \begin{center}
\begin{tikzpicture}[scale=0.5, bb/.style={draw,circle,fill,minimum size=2.5mm,inner sep=0pt,outer sep=0pt}, wb/.style={draw,circle,fill=white,minimum size=2.5mm,inner sep=0pt,outer sep=0pt}]
	
	\node [] at (11,-1) {10};
	\node [] at (10,-1) {9};
	\node [] at (9,-1) {8};
	\node [] at (8,-1) {7};
	\node [] at (7,-1) {6};
	\node [] at (6,-1) {5};
	\node [] at (5,-1) {4};
	\node [] at (4,-1) {3};
	\node [] at (3,-1) {2};
	\node [] at (2,-1) {1};
	\node [] at (1,-1) {0};
	\node [] at (0,-1) {-1};
	\node [] at (-1,-1) {-2};
	\node [] at (-2,-1) {-3};
	\node [] at (-3,-1) {-4};
	\node [] at (-4,-1) {-5};
	\node [] at (-5,-1) {-6};
	\node [] at (-6,-1) {-7};
	\node [] at (-7,-1) {-8};
	\node [] at (-8,-1) {-9};
	\node [] at (-9,-1) {-10};
			\draw (-10,-1) node[]{$\ldots$};

	\node [wb] at (11,0) {};
	\node [wb] at (10,0) {};
	\node [wb] at (9,0) {};
	\node [wb] at (8,0) {};
	\node [wb] at (7,0) {};
	\node [bb] at (6,0) {};
	\node [wb] at (5,0) {};
	\node [wb] at (4,0) {};
	\node [wb] at (3,0) {};
	\node [bb] at (2,0) {};
	\node [wb] at (1,0) {};
	\node [bb] at (0,0) {};
	\node [wb] at (-1,0) {};
	\node [bb] at (-2,0) {};
	\node [bb] at (-3,0) {};
	\node [wb] at (-4,0) {};
	\node [bb] at (-5,0) {};
	\node [bb] at (-6,0) {};
	\node [bb] at (-7,0) {};
	\node [bb] at (-8,0) {};
	\node [bb] at (-9,0) {};
				\draw (-10,0) node[]{$\ldots$};
	\end{tikzpicture}
\end{center}
The associated $l$-symbol is :
$$((\ldots,-3, -1,2),(\ldots, -3,-1,1)). $$
with abacus:
 \begin{center}
\begin{tikzpicture}[scale=0.5, bb/.style={draw,circle,fill,minimum size=2.5mm,inner sep=0pt,outer sep=0pt}, wb/.style={draw,circle,fill=white,minimum size=2.5mm,inner sep=0pt,outer sep=0pt}]
	
	\node [] at (11,-1) {10};
	\node [] at (10,-1) {9};
	\node [] at (9,-1) {8};
	\node [] at (8,-1) {7};
	\node [] at (7,-1) {6};
	\node [] at (6,-1) {5};
	\node [] at (5,-1) {4};
	\node [] at (4,-1) {3};
	\node [] at (3,-1) {2};
	\node [] at (2,-1) {1};
	\node [] at (1,-1) {0};
	\node [] at (0,-1) {-1};
	\node [] at (-1,-1) {-2};
	\node [] at (-2,-1) {-3};
	\node [] at (-3,-1) {-4};
	\node [] at (-4,-1) {-5};
	\node [] at (-5,-1) {-6};
	\node [] at (-6,-1) {-7};
	\node [] at (-7,-1) {-8};
	\node [] at (-8,-1) {-9};
	\node [] at (-9,-1) {-10};
			\draw (-10,-1) node[]{$\ldots$};
	\draw (-12,1) node[]{$X^1$};

	\node [wb] at (11,0) {};
	\node [wb] at (10,0) {};
	\node [wb] at (9,0) {};
	\node [wb] at (8,0) {};
	\node [wb] at (7,0) {};
	\node [wb] at (6,0) {};
	\node [wb] at (5,0) {};
	\node [wb] at (4,0) {};
	\node [bb] at (3,0) {};
	\node [wb] at (2,0) {};
	\node [wb] at (1,0) {};
	\node [bb] at (0,0) {};
	\node [wb] at (-1,0) {};
	\node [bb] at (-2,0) {};
	\node [bb] at (-3,0) {};
	\node [bb] at (-4,0) {};
	\node [bb] at (-5,0) {};
	\node [bb] at (-6,0) {};
	\node [bb] at (-7,0) {};
	\node [bb] at (-8,0) {};
	\node [bb] at (-9,0) {};
				\draw (-10,0) node[]{$\ldots$};
					\draw (-12,0) node[]{$X^0$};
	
	\node [wb] at (11,1) {};
	\node [wb] at (10,1) {};
	\node [wb] at (9,1) {};
	\node [wb] at (8,1) {};
	\node [wb] at (7,1) {};
	\node [wb] at (6,1) {};
	\node [wb] at (5,1) {};
	\node [wb] at (4,1) {};
	\node [wb] at (3,1) {};
	\node [bb] at (2,1) {};
	\node [wb] at (1,1) {};
	\node [bb] at (0,1) {};
	\node [wb] at (-1,1) {};
	\node [bb] at (-2,1) {};
	\node [bb] at (-3,1) {};
	\node [bb] at (-4,1) {};
	\node [bb] at (-5,1) {};
	\node [bb] at (-6,1) {};
	\node [bb] at (-7,1) {};
	\node [bb] at (-8,1) {};
	\node [bb] at (-9,1) {};
				\draw (-10,1) node[]{$\ldots$};
	\end{tikzpicture}
\end{center}
and we thus get $\tau^l (\lambda)=(((3,1),(2,1)),(0,0))$.

  \end{exa}

\subsection{Relations between the two notions of quotients}\label{proc}

Assume that we have a partition $\lambda$  and that $\tau^l (\lambda)=(\ulambda^l,{\bf s}^l)$,  
$\tau_e (\lambda)=(\ulambda_e,{\bf s}_e)$. Then one can go easily from the abacus of $\tau^l (\lambda)$
 to the abacus of $\tau_e (\lambda)$ and reciprocally.  To do that, start with the abacus 
  associated to  $(\ulambda^l,{\bf s}^l)$
 \begin{itemize}
 \item  We define a rectangle  on the $l$-abacus, containing $e$ beads in each abacus. This rectangle  starts with the beads numbered  with $0$ and finishes with the beads 
numbered  with $e-1$. We get a rectangle with $el$ beads, then again define a second rectangle with the beads numbered with $e$ to the beads numbered by  $2e-1$ an so on, even with the beads marked with negative integers. 
\item Rotate each rectangle $90$ degree anticlockwise. 
\item We get a new $e$-abacus, which is the $e$-abacus of $(\ulambda_e,{\bf s}_e)$. 
 \end{itemize}
 Thus take $l=2$ and $e=3$ and take the abacus of a partition $\lambda$. Then if we keep writing the labelling of the beads of the abacus of $\lambda$, the  $2$-abacus 
  of $\tau^l (\lambda)$ becomes:
   \begin{center}
\begin{tikzpicture}[scale=0.5, bb/.style={draw,circle,fill,minimum size=2.5mm,inner sep=0pt,outer sep=0pt}, wb/.style={draw,circle,fill=white,minimum size=2.5mm,inner sep=0pt,outer sep=0pt}]
	
	\node [] at (11,1) {19};
	\node [] at (10,1) {18};
	\node [] at (9,1) {14};
	\node [] at (8,1) {13};
	\node [] at (7,1) {12};
	\node [] at (6,1) {8};
	\node [] at (5,1) {7};
	\node [] at (4,1) {6};
	\node [] at (3,1) {2};
	\node [] at (2,1) {1};
	\node [] at (1,1) {0};
	\node [] at (0,1) {-4};
	\node [] at (-1,1) {-5};
	\node [] at (-2,1) {-6};
	\node [] at (-3,1) {-10};
	\node [] at (-4,1) {-11};
	\node [] at (-5,1) {-12};
	\node [] at (-6,1) {-16};
	\node [] at (-7,1) {-17};
	\node [] at (-8,1) {-18};
	\node [] at (-9,1) {-22};
			\draw (-10,1) node[]{$\ldots$};

	\node [] at (11,0) {22};
	\node [] at (10,0) {21};
	\node [] at (9,0) {17};
	\node [] at (8,0) {16};
	\node [] at (7,0) {15};
	\node [] at (6,0) {11};
	\node [] at (5,0) {10};
	\node [] at (4,0) {9};
	\node [] at (3,0) {5};
	\node [] at (2,0) {4};
	\node [] at (1,0) {3};
	\node [] at (0,0) {-1};
	\node [] at (-1,0) {-2};
	\node [] at (-2,0) {-3};
	\node [] at (-3,0) {-7};
	\node [] at (-4,0) {-8};
	\node [] at (-5,0) {-9};
	\node [] at (-6,0) {-13};
	\node [] at (-7,0) {-14};
	\node [] at (-8,0) {-15};
	\node [] at (-9,0) {-19};
			\draw (-10,-1) node[]{$\ldots$};
						\draw[dashed](0.5,-0.5)--node[]{}(0.5,1.5);
			\draw (-10,1) node[]{$\ldots$};
					\draw[dashed](3.5,-0.5)--node[]{}(3.5,1.5);		
						\draw (-10,1) node[]{$\ldots$};
					\draw[dashed](-2.5,-0.5)--node[]{}(-2.5,1.5);	
						\draw[dashed](-5.5,-0.5)--node[]{}(-5.5,1.5);	
							\draw[dashed](6.5,-0.5)--node[]{}(6.5,1.5);
								\draw[dashed](9.5,-0.5)--node[]{}(9.5,1.5);
									\draw[dashed](-8.5,-0.5)--node[]{}(-8.5,1.5);
	\end{tikzpicture}
\end{center}
 The $3$-abacus of $\tau_e (\lambda)$ then gives:
 
    \begin{center}
\begin{tikzpicture}[scale=0.5, bb/.style={draw,circle,fill,minimum size=2.5mm,inner sep=0pt,outer sep=0pt}, wb/.style={draw,circle,fill=white,minimum size=2.5mm,inner sep=0pt,outer sep=0pt}]
	
	\node [] at (10,2) {35};
	\node [] at (9,2) {32};
	\node [] at (8,2) {29};
	\node [] at (7,2) {26};
	\node [] at (6,2) {23};
	\node [] at (5,2) {20};
	\node [] at (4,2) {17};
	\node [] at (3,2) {14};
	\node [] at (2,2) {11};
	\node [] at (1,2) {8};
	\node [] at (0,2) {5};
	\node [] at (-1,2) {2};
	\node [] at (-2,2) {-1};
	\node [] at (-3,2) {-4};
	\node [] at (-4,2) {-7};
	\node [] at (-5,2) {-10};
	\node [] at (-6,2) {-13};
	\node [] at (-7,2) {-16};
	\node [] at (-8,2) {-19};
	\node [] at (-9,2) {-22};
			\draw (-10,2) node[]{$\ldots$};
	
	\node [] at (10,1) {34};
	\node [] at (9,1) {31};
	\node [] at (8,1) {28};
	\node [] at (7,1) {25};
	\node [] at (6,1) {22};
	\node [] at (5,1) {19};
	\node [] at (4,1) {16};
	\node [] at (3,1) {13};
	\node [] at (2,1) {10};
	\node [] at (1,1) {7};
	\node [] at (0,1) {4};
	\node [] at (-1,1) {1};
	\node [] at (-2,1) {-2};
	\node [] at (-3,1) {-5};
	\node [] at (-4,1) {-8};
	\node [] at (-5,1) {-11};
	\node [] at (-6,1) {-14};
	\node [] at (-7,1) {-17};
	\node [] at (-8,1) {-20};
	\node [] at (-9,1) {-23};
			\draw (-10,1) node[]{$\ldots$};

	\node [] at (10,0) {33};
	\node [] at (9,0) {30};
	\node [] at (8,0) {27};
	\node [] at (7,0) {24};
	\node [] at (6,0) {21};
	\node [] at (5,0) {18};
	\node [] at (4,0) {15};
	\node [] at (3,0) {12};
	\node [] at (2,0) {9};
	\node [] at (1,0) {6};
	\node [] at (0,0) {3};
	\node [] at (-1,0) {0};
	\node [] at (-2,0) {-3};
	\node [] at (-3,0) {-6};
	\node [] at (-4,0) {-9};
	\node [] at (-5,0) {-12};
	\node [] at (-6,0) {-15};
	\node [] at (-7,0) {-18};
	\node [] at (-8,0) {-21};
	\node [] at (-9,0) {-24};
			\draw (-10,0) node[]{$\ldots$};
					\draw (-10,2) node[]{$\ldots$};
				\draw[dashed](-7.5,-0.5)--node[]{}(-7.5,2.5);\draw (-10,2) node[]{$\ldots$};
				\draw[dashed](-5.5,-0.5)--node[]{}(-5.5,2.5);\draw (-10,2) node[]{$\ldots$};
				\draw[dashed](-3.5,-0.5)--node[]{}(-3.5,2.5);\draw (-10,2) node[]{$\ldots$};
				\draw[dashed](-1.5,-0.5)--node[]{}(-1.5,2.5);\draw (-10,2) node[]{$\ldots$};
				\draw[dashed](0.5,-0.5)--node[]{}(0.5,2.5);\draw (-10,2) node[]{$\ldots$};
				\draw[dashed](2.5,-0.5)--node[]{}(2.5,2.5);\draw (-10,2) node[]{$\ldots$};
				\draw[dashed](4.5,-0.5)--node[]{}(4.5,2.5);\draw (-10,2) node[]{$\ldots$};
				\draw[dashed](6.5,-0.5)--node[]{}(6.5,2.5);\draw (-10,2) node[]{$\ldots$};				
				\draw[dashed](8.5,-0.5)--node[]{}(8.5,2.5);\draw (-10,2) node[]{$\ldots$};

	\end{tikzpicture}
\end{center}

 \begin{exa}\label{2111}
 We keep the above example. Recall that we have $\tau^l (\lambda)=(((3,1),(2,1)),(0,0))$ with thus the above abacus. 
  \begin{center}
\begin{tikzpicture}[scale=0.5, bb/.style={draw,circle,fill,minimum size=2.5mm,inner sep=0pt,outer sep=0pt}, wb/.style={draw,circle,fill=white,minimum size=2.5mm,inner sep=0pt,outer sep=0pt}]
	
	\node [] at (11,-1) {10};
	\node [] at (10,-1) {9};
	\node [] at (9,-1) {8};
	\node [] at (8,-1) {7};
	\node [] at (7,-1) {6};
	\node [] at (6,-1) {5};
	\node [] at (5,-1) {4};
	\node [] at (4,-1) {3};
	\node [] at (3,-1) {2};
	\node [] at (2,-1) {1};
	\node [] at (1,-1) {0};
	\node [] at (0,-1) {-1};
	\node [] at (-1,-1) {-2};
	\node [] at (-2,-1) {-3};
	\node [] at (-3,-1) {-4};
	\node [] at (-4,-1) {-5};
	\node [] at (-5,-1) {-6};
	\node [] at (-6,-1) {-7};
	\node [] at (-7,-1) {-8};
	\node [] at (-8,-1) {-9};
	\node [] at (-9,-1) {-10};
			\draw (-10,-1) node[]{$\ldots$};

	\node [wb] at (11,0) {};
	\node [wb] at (10,0) {};
	\node [wb] at (9,0) {};
	\node [wb] at (8,0) {};
	\node [wb] at (7,0) {};
	\node [wb] at (6,0) {};
	\node [wb] at (5,0) {};
	\node [wb] at (4,0) {};
	\node [bb] at (3,0) {};
	\node [wb] at (2,0) {};
	\node [wb] at (1,0) {};
	\node [bb] at (0,0) {};
	\node [wb] at (-1,0) {};
	\node [bb] at (-2,0) {};
	\node [bb] at (-3,0) {};
	\node [bb] at (-4,0) {};
	\node [bb] at (-5,0) {};
	\node [bb] at (-6,0) {};
	\node [bb] at (-7,0) {};
	\node [bb] at (-8,0) {};
	\node [bb] at (-9,0) {};
				\draw (-10,0) node[]{$\ldots$};

	\node [wb] at (11,1) {};
	\node [wb] at (10,1) {};
	\node [wb] at (9,1) {};
	\node [wb] at (8,1) {};
	\node [wb] at (7,1) {};
	\node [wb] at (6,1) {};
	\node [wb] at (5,1) {};
	\node [wb] at (4,1) {};
	\node [wb] at (3,1) {};
	\node [bb] at (2,1) {};
	\node [wb] at (1,1) {};
	\node [bb] at (0,1) {};
	\node [wb] at (-1,1) {};
	\node [bb] at (-2,1) {};
	\node [bb] at (-3,1) {};
	\node [bb] at (-4,1) {};
	\node [bb] at (-5,1) {};
	\node [bb] at (-6,1) {};
	\node [bb] at (-7,1) {};
	\node [bb] at (-8,1) {};
	\node [bb] at (-9,1) {};
				\draw (-10,1) node[]{$\ldots$};
					\draw[dashed](0.5,-0.5)--node[]{}(0.5,2.5);
			\draw (-10,1) node[]{$\ldots$};
					\draw[dashed](3.5,-0.5)--node[]{}(3.5,2.5);		
						\draw (-10,1) node[]{$\ldots$};
					\draw[dashed](-2.5,-0.5)--node[]{}(-2.5,2.5);	
						\draw[dashed](-5.5,-0.5)--node[]{}(-5.5,2.5);	
							\draw[dashed](6.5,-0.5)--node[]{}(6.5,2.5);
								\draw[dashed](9.5,-0.5)--node[]{}(9.5,2.5);
									\draw[dashed](-8.5,-0.5)--node[]{}(-8.5,2.5);
	\end{tikzpicture}
\end{center}
and then after rotation:
 \begin{center}
\begin{tikzpicture}[scale=0.5, bb/.style={draw,circle,fill,minimum size=2.5mm,inner sep=0pt,outer sep=0pt}, wb/.style={draw,circle,fill=white,minimum size=2.5mm,inner sep=0pt,outer sep=0pt}]
	
	\node [] at (11,-1) {10};
	\node [] at (10,-1) {9};
	\node [] at (9,-1) {8};
	\node [] at (8,-1) {7};
	\node [] at (7,-1) {6};
	\node [] at (6,-1) {5};
	\node [] at (5,-1) {4};
	\node [] at (4,-1) {3};
	\node [] at (3,-1) {2};
	\node [] at (2,-1) {1};
	\node [] at (1,-1) {0};
	\node [] at (0,-1) {-1};
	\node [] at (-1,-1) {-2};
	\node [] at (-2,-1) {-3};
	\node [] at (-3,-1) {-4};
	\node [] at (-4,-1) {-5};
	\node [] at (-5,-1) {-6};
	\node [] at (-6,-1) {-7};
	\node [] at (-7,-1) {-8};
	\node [] at (-8,-1) {-9};
	\node [] at (-9,-1) {-10};
			\draw (-10,-1) node[]{$\ldots$};

	\node [wb] at (11,0) {};
	\node [wb] at (10,0) {};
	\node [wb] at (9,0) {};
	\node [wb] at (8,0) {};
	\node [wb] at (7,0) {};
	\node [wb] at (6,0) {};
	\node [wb] at (5,0) {};
	\node [wb] at (4,0) {};
	\node [wb] at (3,0) {};
	\node [wb] at (2,0) {};
	\node [wb] at (1,0) {};
	\node [bb] at (0,0) {};
	\node [bb] at (-1,0) {};
	\node [bb] at (-2,0) {};
	\node [bb] at (-3,0) {};
	\node [bb] at (-4,0) {};
	\node [bb] at (-5,0) {};
	\node [bb] at (-6,0) {};
	\node [bb] at (-7,0) {};
	\node [bb] at (-8,0) {};
	\node [bb] at (-9,0) {};
				\draw (-10,0) node[]{$\ldots$};

	\node [wb] at (11,1) {};
	\node [wb] at (10,1) {};
	\node [wb] at (9,1) {};
	\node [wb] at (8,1) {};
	\node [wb] at (7,1) {};
	\node [wb] at (6,1) {};
	\node [wb] at (5,1) {};
	\node [wb] at (4,1) {};
	\node [wb] at (3,1) {};
	\node [wb] at (2,1) {};
	\node [bb] at (1,1) {};
	\node [wb] at (0,1) {};
	\node [wb] at (-1,1) {};
	\node [bb] at (-2,1) {};
	\node [bb] at (-3,1) {};
	\node [bb] at (-4,1) {};
	\node [bb] at (-5,1) {};
	\node [bb] at (-6,1) {};
	\node [bb] at (-7,1) {};
	\node [bb] at (-8,1) {};
	\node [bb] at (-9,1) {};
				\draw (-10,1) node[]{$\ldots$};
	
	\node [wb] at (11,2) {};
	\node [wb] at (10,2) {};
	\node [wb] at (9,2) {};
	\node [wb] at (8,2) {};
	\node [wb] at (7,2) {};
	\node [wb] at (6,2) {};
	\node [wb] at (5,2) {};
	\node [wb] at (4,2) {};
	\node [wb] at (3,2) {};
	\node [bb] at (2,2) {};
	\node [wb] at (1,2) {};
	\node [bb] at (0,2) {};
	\node [bb] at (-1,2) {};
	\node [bb] at (-2,2) {};
	\node [bb] at (-3,2) {};
	\node [bb] at (-4,2) {};
	\node [bb] at (-5,2) {};
	\node [bb] at (-6,2) {};
	\node [bb] at (-7,2) {};
	\node [bb] at (-8,2) {};
	\node [bb] at (-9,2) {};
				\draw (-10,2) node[]{$\ldots$};
				\draw[dashed](-7.5,-0.5)--node[]{}(-7.5,2.5);\draw (-10,2) node[]{$\ldots$};
				\draw[dashed](-5.5,-0.5)--node[]{}(-5.5,2.5);\draw (-10,2) node[]{$\ldots$};
				\draw[dashed](-3.5,-0.5)--node[]{}(-3.5,2.5);\draw (-10,2) node[]{$\ldots$};
				\draw[dashed](-1.5,-0.5)--node[]{}(-1.5,2.5);\draw (-10,2) node[]{$\ldots$};
				\draw[dashed](0.5,-0.5)--node[]{}(0.5,2.5);\draw (-10,2) node[]{$\ldots$};
				\draw[dashed](2.5,-0.5)--node[]{}(2.5,2.5);\draw (-10,2) node[]{$\ldots$};
				\draw[dashed](4.5,-0.5)--node[]{}(4.5,2.5);\draw (-10,2) node[]{$\ldots$};
				\draw[dashed](6.5,-0.5)--node[]{}(6.5,2.5);\draw (-10,2) node[]{$\ldots$};				
				\draw[dashed](8.5,-0.5)--node[]{}(8.5,2.5);\draw (-10,2) node[]{$\ldots$};				
	\end{tikzpicture}
\end{center}
which is the $3$-abacus if $\tau_e (\lambda)=((\emptyset, (2),  (1)),(0,-1,1))$. Note that $(\emptyset,(2),(1))$ is indeed the $3$-quotient of $\lambda$.

 \end{exa}

\subsection{Nodes again} 
Let $\lambda$ be a partition and write $\tau^l (\lambda)=(\ulambda^l,{\bf s}^l)$. 
 Let $\tau_e (\lambda)=(\ulambda_e,{\bf s}_e)$ and denote the associated symbol by :
 $$(X^0,\ldots,X^{e-1}).$$ 
 Assume that there exists $i\in \{0,\ldots,e-2\}$ such that $j\in X^i$ and such that $j\notin X^{i+1}$. Let us consider the symbol
 ${\bf Y}=(Y^0,\ldots,Y^{e-1})$ such that $Y^k=X^k$ if $k\neq i,i+1$, $Y^i=X^i\setminus \{j\}$ and $Y^{i+1}=X^{i+1}\cup \{j\}$. The associated abacus is obtained  by moving the black bead numbered by $j$ from the $i$th runner to the $i+1$th one. 
We obtain an  $e$-symbol which is itself associated to a multipartition   $\umu^l$. This multipartition  is obtained from $\ulambda^l$ by removing a node with residue $i+e\mathbb{Z}$. Reciprocally, adding a node  with residue  $i+e\mathbb{Z}$ consists in doing the above manipulation on abacus/symbol.

 If now  $j\in X^{0}$ and  $j-l\notin X^{e-1}$, on can consider 
 ${\bf Y}=(X^0\setminus \{j\},\ldots,X^{e-1}\sqcup \{j-l\})$. The associated abacus is obtained by moving the black bead numbered by $j$ from the $0$th runner to the $e-1$th one at the place $j-l$. 
 Then we have the $e$-abacus associated to an $l$-partition  $\umu^l$ is obtained from $\ulambda^l$ by removing a node with residue $0+e\mathbb{Z}$. Reciprocally, adding  a node with residue $0+e\mathbb{Z}$  consists in doing the above manipulation on abacus/symbol. 

 
 \begin{exa}
 Take $l=2$, ${\bf s}_l=(0,0)$, $e=3$ and the $2$-partition $\ulambda^l=((3,1),(2,1))$. The associated $l$-abacus is 
  represented in Example \ref{2111}.  The Young diagram is : 
 $$(\ \ytableausetup{centertableaux}
\begin{ytableau}
0 & 1  & 2 \\ 
2
\end{ytableau}, \begin{ytableau}
0 & 1\\
2
\end{ytableau}\ ).$$
 
 We have $3$ removable $3$-nodes, and this means that, in the associated $e$-abacus $(X^0,X^1,X^3)$ ,  we must have exactly three beads in $X^2$   with no bead in the same position in $X^1$ which is the case for positions  $-2$, $-1$ and $1$. We have one addable $2$-node which means that we must have a bead in position $X^1$ with no bead in the same position in $X^2$. This is indeed the case for position $0$. 
 
 Let us  look at the removable / addable $0$-nodes. We have no removable  $0$-nodes.  Indeed, in $X^0$, we have  beads in position $x$ for $x\leq -1$ but we also have beads in position $x-2$ for $x\leq -1$. We have $3$ addable $0$ nodes and indeed, we have $3$ beads in $X^2$ in position  $1$, $-1$ and $-2$ with no bead in $X^0$ in position $3$, $1$ and $0$. If we move the bead from position $21$ in $X^2$ to position $3$ in $X^0$, we get the abacus:
  \begin{center}
\begin{tikzpicture}[scale=0.5, bb/.style={draw,circle,fill,minimum size=2.5mm,inner sep=0pt,outer sep=0pt}, wb/.style={draw,circle,fill=white,minimum size=2.5mm,inner sep=0pt,outer sep=0pt}]
	
	\node [] at (11,-1) {10};
	\node [] at (10,-1) {9};
	\node [] at (9,-1) {8};
	\node [] at (8,-1) {7};
	\node [] at (7,-1) {6};
	\node [] at (6,-1) {5};
	\node [] at (5,-1) {4};
	\node [] at (4,-1) {3};
	\node [] at (3,-1) {2};
	\node [] at (2,-1) {1};
	\node [] at (1,-1) {0};
	\node [] at (0,-1) {-1};
	\node [] at (-1,-1) {-2};
	\node [] at (-2,-1) {-3};
	\node [] at (-3,-1) {-4};
	\node [] at (-4,-1) {-5};
	\node [] at (-5,-1) {-6};
	\node [] at (-6,-1) {-7};
	\node [] at (-7,-1) {-8};
	\node [] at (-8,-1) {-9};
	\node [] at (-9,-1) {-10};
			\draw (-10,-1) node[]{$\ldots$};

	\node [wb] at (11,0) {};
	\node [wb] at (10,0) {};
	\node [wb] at (9,0) {};
	\node [wb] at (8,0) {};
	\node [wb] at (7,0) {};
	\node [wb] at (6,0) {};
	\node [wb] at (5,0) {};
	\node [bb] at (4,0) {};
	\node [wb] at (3,0) {};
	\node [wb] at (2,0) {};
	\node [wb] at (1,0) {};
	\node [bb] at (0,0) {};
	\node [bb] at (-1,0) {};
	\node [bb] at (-2,0) {};
	\node [bb] at (-3,0) {};
	\node [bb] at (-4,0) {};
	\node [bb] at (-5,0) {};
	\node [bb] at (-6,0) {};
	\node [bb] at (-7,0) {};
	\node [bb] at (-8,0) {};
	\node [bb] at (-9,0) {};
				\draw (-10,0) node[]{$\ldots$};

	\node [wb] at (11,1) {};
	\node [wb] at (10,1) {};
	\node [wb] at (9,1) {};
	\node [wb] at (8,1) {};
	\node [wb] at (7,1) {};
	\node [wb] at (6,1) {};
	\node [wb] at (5,1) {};
	\node [wb] at (4,1) {};
	\node [wb] at (3,1) {};
	\node [wb] at (2,1) {};
	\node [bb] at (1,1) {};
	\node [wb] at (0,1) {};
	\node [wb] at (-1,1) {};
	\node [bb] at (-2,1) {};
	\node [bb] at (-3,1) {};
	\node [bb] at (-4,1) {};
	\node [bb] at (-5,1) {};
	\node [bb] at (-6,1) {};
	\node [bb] at (-7,1) {};
	\node [bb] at (-8,1) {};
	\node [bb] at (-9,1) {};
				\draw (-10,1) node[]{$\ldots$};
	
	\node [wb] at (11,2) {};
	\node [wb] at (10,2) {};
	\node [wb] at (9,2) {};
	\node [wb] at (8,2) {};
	\node [wb] at (7,2) {};
	\node [wb] at (6,2) {};
	\node [wb] at (5,2) {};
	\node [wb] at (4,2) {};
	\node [wb] at (3,2) {};
	\node [wb] at (2,2) {};
	\node [wb] at (1,2) {};
	\node [bb] at (0,2) {};
	\node [bb] at (-1,2) {};
	\node [bb] at (-2,2) {};
	\node [bb] at (-3,2) {};
	\node [bb] at (-4,2) {};
	\node [bb] at (-5,2) {};
	\node [bb] at (-6,2) {};
	\node [bb] at (-7,2) {};
	\node [bb] at (-8,2) {};
	\node [bb] at (-9,2) {};
				\draw (-10,2) node[]{$\ldots$};
				\draw[dashed](-7.5,-0.5)--node[]{}(-7.5,2.5);\draw (-10,2) node[]{$\ldots$};
				\draw[dashed](-5.5,-0.5)--node[]{}(-5.5,2.5);\draw (-10,2) node[]{$\ldots$};
				\draw[dashed](-3.5,-0.5)--node[]{}(-3.5,2.5);\draw (-10,2) node[]{$\ldots$};
				\draw[dashed](-1.5,-0.5)--node[]{}(-1.5,2.5);\draw (-10,2) node[]{$\ldots$};
				\draw[dashed](0.5,-0.5)--node[]{}(0.5,2.5);\draw (-10,2) node[]{$\ldots$};
				\draw[dashed](2.5,-0.5)--node[]{}(2.5,2.5);\draw (-10,2) node[]{$\ldots$};
				\draw[dashed](4.5,-0.5)--node[]{}(4.5,2.5);\draw (-10,2) node[]{$\ldots$};
				\draw[dashed](6.5,-0.5)--node[]{}(6.5,2.5);\draw (-10,2) node[]{$\ldots$};				
				\draw[dashed](8.5,-0.5)--node[]{}(8.5,2.5);\draw (-10,2) node[]{$\ldots$};				
	\end{tikzpicture}
\end{center}
The associated $l$-abacus is 
   \begin{center}
\begin{tikzpicture}[scale=0.5, bb/.style={draw,circle,fill,minimum size=2.5mm,inner sep=0pt,outer sep=0pt}, wb/.style={draw,circle,fill=white,minimum size=2.5mm,inner sep=0pt,outer sep=0pt}]
	
	\node [] at (11,-1) {10};
	\node [] at (10,-1) {9};
	\node [] at (9,-1) {8};
	\node [] at (8,-1) {7};
	\node [] at (7,-1) {6};
	\node [] at (6,-1) {5};
	\node [] at (5,-1) {4};
	\node [] at (4,-1) {3};
	\node [] at (3,-1) {2};
	\node [] at (2,-1) {1};
	\node [] at (1,-1) {0};
	\node [] at (0,-1) {-1};
	\node [] at (-1,-1) {-2};
	\node [] at (-2,-1) {-3};
	\node [] at (-3,-1) {-4};
	\node [] at (-4,-1) {-5};
	\node [] at (-5,-1) {-6};
	\node [] at (-6,-1) {-7};
	\node [] at (-7,-1) {-8};
	\node [] at (-8,-1) {-9};
	\node [] at (-9,-1) {-10};
			\draw (-10,-1) node[]{$\ldots$};

	\node [wb] at (11,0) {};
	\node [wb] at (10,0) {};
	\node [wb] at (9,0) {};
	\node [wb] at (8,0) {};
	\node [wb] at (7,0) {};
	\node [wb] at (6,0) {};
	\node [wb] at (5,0) {};
	\node [bb] at (4,0) {};
	\node [wb] at (3,0) {};
	\node [wb] at (2,0) {};
	\node [wb] at (1,0) {};
	\node [bb] at (0,0) {};
	\node [wb] at (-1,0) {};
	\node [bb] at (-2,0) {};
	\node [bb] at (-3,0) {};
	\node [bb] at (-4,0) {};
	\node [bb] at (-5,0) {};
	\node [bb] at (-6,0) {};
	\node [bb] at (-7,0) {};
	\node [bb] at (-8,0) {};
	\node [bb] at (-9,0) {};
				\draw (-10,0) node[]{$\ldots$};

	\node [wb] at (11,1) {};
	\node [wb] at (10,1) {};
	\node [wb] at (9,1) {};
	\node [wb] at (8,1) {};
	\node [wb] at (7,1) {};
	\node [wb] at (6,1) {};
	\node [wb] at (5,1) {};
	\node [wb] at (4,1) {};
	\node [wb] at (3,1) {};
	\node [bb] at (2,1) {};
	\node [wb] at (1,1) {};
	\node [bb] at (0,1) {};
	\node [wb] at (-1,1) {};
	\node [bb] at (-2,1) {};
	\node [bb] at (-3,1) {};
	\node [bb] at (-4,1) {};
	\node [bb] at (-5,1) {};
	\node [bb] at (-6,1) {};
	\node [bb] at (-7,1) {};
	\node [bb] at (-8,1) {};
	\node [bb] at (-9,1) {};
				\draw (-10,1) node[]{$\ldots$};
					\draw[dashed](0.5,-0.5)--node[]{}(0.5,2.5);
			\draw (-10,1) node[]{$\ldots$};
					\draw[dashed](3.5,-0.5)--node[]{}(3.5,2.5);		
						\draw (-10,1) node[]{$\ldots$};
					\draw[dashed](-2.5,-0.5)--node[]{}(-2.5,2.5);	
						\draw[dashed](-5.5,-0.5)--node[]{}(-5.5,2.5);	
							\draw[dashed](6.5,-0.5)--node[]{}(6.5,2.5);
								\draw[dashed](9.5,-0.5)--node[]{}(9.5,2.5);
									\draw[dashed](-8.5,-0.5)--node[]{}(-8.5,2.5);
	\end{tikzpicture}
\end{center}
 which is the $l$-abacus  of $((4,1),(2,1))$, a bipartition obtained from $((3,1),(2,1))$ by adding a $0$-node. 
 \end{exa}

\subsection{Size}

The following result is implicit in Uglov \cite{U} and Yvonne's papers \cite{Y}. We give a formal proof  for the convenience of the reader as such a proof seems not to appear in the literature.\footnote{We thank C.Lecouvey for discussion on this proof.} There is a self-contained  and purely combinatorial (but more complicated) proof which is given in \cite{Jsize}. 
 
 \begin{Prop}\label{rank}
 Assume that $\lambda$ is a partition and  that $\tau^l (\lambda)=(\ulambda^l,{\bf s}^l)$ and $\tau_e (\lambda)=(\ulambda_e,{\bf s}_e)$. Assume that $\mu$ is a partition and that $\tau^l (\mu)=(\umu^l,{\bf s}^l)$ and $\tau_e (\mu)=(\umu_e,{\bf s}_e)$.
  Assume in addition that $|\umu^l|=|\ulambda^l|$. Then we have $|\umu_e|=|\ulambda_e|$.
  \end{Prop}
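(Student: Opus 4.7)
The plan is to form a common refinement of the two decompositions $\tau_e$ and $\tau^l$. Partition the big abacus into $el$ \emph{joint runners} indexed by $(j,d) \in \{0,\dots,e-1\}\times\{0,\dots,l-1\}$: runner $(j,d)$ collects the positions $p$ with $p\equiv j \pmod e$ and $\lfloor p/e\rfloor \equiv d \pmod l$. This simultaneously refines the standard $e$-runners from $\tau_e$ (the $j$-th runner is split further by the index $d$) and the Uglov $l$-runners from $\tau^l$ (the $d$-th runner has an internal $e$-structure in which the $j$-th subrunner is exactly the joint runner $(j,d)$). Each joint runner carries its own charge $s^{(j,d)}$ and partition $\lambda^{(j,d)}$, and summing either index recovers the marginals: $\sum_d s^{(j,d)}=s^e_j$ and $\sum_j s^{(j,d)}=s^l_d$.

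Now apply the classical $r$-core/$r$-quotient size identity at both levels. For the Uglov-runner partition $\lambda^{(d)}$ (a single-runner abacus of charge $s^l_d$), its standard $e$-quotient is precisely $(\lambda^{(0,d)},\dots,\lambda^{(e-1,d)})$ with multicharge ${\bf s}^{(*,d)}:=(s^{(0,d)},\dots,s^{(e-1,d)})$; writing $K_r({\bf s})$ for the size of the $r$-core with multicharge ${\bf s}$ and summing over $d$ yields
\[
|\ulambda^l|=\sum_{d=0}^{l-1}K_e\bigl({\bf s}^{(*,d)}\bigr)+e\,Q,\qquad Q:=\sum_{j,d}|\lambda^{(j,d)}|.
\]
The symmetric computation applied to each standard-runner partition $\lambda^e_j$, whose $l$-quotient is $(\lambda^{(j,0)},\dots,\lambda^{(j,l-1)})$ with multicharge ${\bf s}^{(j,*)}$, gives
\[
|\ulambda_e|=\sum_{j=0}^{e-1}K_l\bigl({\bf s}^{(j,*)}\bigr)+l\,Q.
\]

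To eliminate the unknown $Q$ I form $e|\ulambda_e|-l|\ulambda^l|$ and substitute the quadratic formula $K_r({\bf s})=\tfrac{r}{2}\sum_i s_i^2+\sum_i i\,s_i-\tfrac{1}{2}(\sum_i s_i)(\sum_i s_i+r-1)$. The contributions $\tfrac{el}{2}\sum_{j,d}(s^{(j,d)})^2$ appear with matching coefficients on the two sides and cancel. Using the marginal identities $\sum_j s^{(j,d)}=s^l_d$ and $\sum_d s^{(j,d)}=s^e_j$, the remaining linear-in-$s^{(j,d)}$ terms reassemble into an expression depending only on $({\bf s}^l,{\bf s}_e)$, the leading cross-term being $e\sum_d d\,s^l_d-l\sum_j j\,s^e_j$ plus purely charge-dependent corrections involving $\sum_d (s^l_d)^2$, $\sum_j (s^e_j)^2$, and $m$. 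Therefore $e|\ulambda_e|-l|\ulambda^l|$ is a function of $({\bf s}^l,{\bf s}_e)$ alone; since $\lambda$ and $\mu$ share these multicharges, $|\ulambda^l|=|\umu^l|$ immediately forces $|\ulambda_e|=|\umu_e|$. The delicate step is verifying the cancellation of the $(s^{(j,d)})^2$-contributions, which hinges on the precise normalization in the $r$-core-size formula; the remainder is routine index bookkeeping.
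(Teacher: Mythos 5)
Your argument is correct, and it takes a genuinely different route from the paper's. You refine the two decompositions simultaneously into $el$ joint runners and apply the classical identity $|\nu|=|\mathrm{core}_r(\nu)|+r\,|\mathrm{quot}_r(\nu)|$ twice, once inside each Uglov runner and once inside each standard runner; the double-quotient size $Q$ then enters $l|\ulambda^l|$ and $e|\ulambda_e|$ with the same coefficient $el$ and drops out of $e|\ulambda_e|-l|\ulambda^l|$, and the quadratic core-size formula reduces what remains to a function of $({\bf s}^l,{\bf s}_e)$ alone. The paper argues instead through node counts: it combines $|\lambda|=e|\ulambda_e|+|\lambda^{\circ}|$ with the identity $|\lambda|-l|\lambda^{\circ}|=|\ulambda^l|+el\,N_0(\ulambda^l,l)$ from \cite{JL} and Yvonne's relations (W1)--(W4) between the numbers of $i$-nodes on the two sides of the duality, so that everything reduces to showing $N_0(\ulambda^l,{\bf s}^l)=N_0(\umu^l,{\bf s}^l)$. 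Your route is self-contained (no appeal to \cite{JL} or \cite{Y}) and actually proves more, namely an explicit closed formula for $e|\ulambda_e|-l|\ulambda^l|$ in terms of the two multicharges; the paper's route is shorter because it outsources the combinatorics to those references and stays in the node-counting language used elsewhere in the article. One caution on the step you yourself flag as delicate: with the paper's normalization of the Uglov map (where $c_j\in\{1,\ldots,e\}$), the joint charges read off through the two routes do not literally coincide --- the classes with $j=0$ acquire shifts by $1$ and a cyclic relabelling in $d$ --- so the $(s^{(j,d)})^2$ contributions do not cancel term by term as you assert. The non-marginal leftovers from the quadratic part and from the linear part do cancel against each other, so the conclusion stands, but this must be verified explicitly; alternatively, one first aligns the two conventions (taking residues in $\{0,\ldots,e-1\}$ on both sides), after which the joint charges agree on the nose and the cancellation is immediate.
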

\begin{proof}

For $x=l$ or $x=e$ and $0\leq i \leq e-1$, we set $N_i (\unu,{\bf t})$ to be the number of $i$-nodes (associated with the multicharge ${\bf t}$) for the $x$-partition $\unu$. 
 Let $\lambda^{\circ}$ be the common $e$-core of $\lambda$ and $\mu$. 
By  \cite[Prop. 2.26]{JL}, we have:
$$|\lambda|-l |\lambda^{\circ}|=|\ulambda^l|+el N_0(\ulambda^l,{\bf s}^l),$$
and 
$$|\mu|-l |\lambda^{\circ}|=|\umu^l|+el N_0(\umu^l,{\bf s}^l).$$
So we have 
$$|\mu|- el N_0(\umu^l,{\bf s}^l)=|\lambda|- el N_0(\ulambda^l,{\bf s}^l).$$
Now, as $\lambda^{\circ}$ is the $e$-core of $\mu$  and of $\lambda$, we have:
$$|\mu|=e|\umu_e|+|\lambda^{\circ}|,$$
and 
$$|\lambda|=e|\ulambda_e|+|\lambda^{\circ}|.$$
So we obtain that:
$$e|\umu_e|- el N_0(\umu^l,{\bf s}^l)=e |\ulambda_e|- el N_0(\ulambda^l,{\bf s}^l).$$
So we need to show that $N_0(\umu^l,{\bf s}^l)=N_0(\ulambda^l,{\bf s}^l)$. 
By  \cite[(W1) p. 57]{Y} combined with  \cite[(W4) p. 57]{Y}, for all $0\leq i \leq e-1$, we have: 
$$N_i (\ulambda^l,{\bf s}^l)- N_i (\umu^l,{\bf s}^l)=N_0 (\ulambda_e,{\bf s}_e)- N_0 (\umu_e,{\bf s}_e).$$
So summing over all $0\leq i\leq e-1$ leads to :
$$|\umu^l|-|\ulambda^l|=e(N_0 (\ulambda_e,{\bf s}_e)- N_0 (\umu_e,{\bf s}_e)).$$
We deduce $N_0 (\ulambda_e,{\bf s}_e)=N_0 (\umu_e,{\bf s}_e)$.  By \cite[Proof of Prop. 3.24]{Y} (see also \cite[Lemma 3.4.1]{J}), we have that:
$$N_0 (\ulambda_e,{\bf s}_e)- N_0 (\umu_e,{\bf s}_e)=N_0 (\ulambda^l,{\bf s}^l)- N_0 (\umu^l,{\bf s}^l).$$
We thus have:
$N_0 (\ulambda^l,{\bf s}^l)=N_0 (\umu^l,{\bf s}^l)$,  as desired.

\end{proof}

\subsection{Generalized Core}
 We end this section by defining a notion of core for the $l$-partitions, as done in \cite{JL}.  
We fix  ${\bf s}^l\in \mathbb{Z}^l[m]$ and an associated $l$-partition $\ulambda^l$, 
 we here see how one can generalize the notion of core and quotient for this multipartition. To do this, we set:
     $$\overline{\mathcal{A}}^l_e:=\{ (s_0,\ldots,s_{l-1})\in \mathbb{Z}^l\ |\ \forall (i,j)\in \{0,\ldots,l-1\},\ i<j,\ 0\leq s_j-s_i\leq e\},$$
   $${\mathcal{A}}^l_e:=\{ (s_0,\ldots,s_{l-1})\in \mathbb{Z}^l\ |\ \forall (i,j)\in \{0,\ldots,l-1\},\ i<j,\ 0\leq s_j-s_i<e\}.$$


\begin{Def}\label{escore2} Let ${\bf s}^l\in{\mathcal{A}}^l_e$, 
we say that the  pair  $({\boldsymbol{\lambda}}^l,{\bf s}^l)$ is a an  $e$-core  if  this pair satisfies the following property.  Let 
 $(X^0,\ldots,X^{l-1})$ be its associated $l$-symbol then we have 
 $$X^0 \subset X^1 \subset \ldots X^{l-1} \subset  X^0[e].$$ 
\end{Def}
\begin{Rem}
In \cite{JL}, the definition is a little bit more general by considering arbitrary multicharges. 

\end{Rem}
Of course such an  $e$-core  is canonically associated with 
 the partition $(\tau^l)^{-1} (\ulambda^l,{\bf s}^l)$, but also with 
a multicharge ${\bf s}_e\in \mathbb{Z}^e [m]$
 which is defined as $\tau_e (\tau^l)^{-1} (\ulambda^l,{\bf s}^l)=(\uemptyset,{\bf s}_e)$.

 Now, we explain how one can obtain the core of an $l$-partition together with its multicharge. First, by \cite{JL}, one can assume that  
 $${\bf s}^l \in \mathcal{A}^l_e. $$
  We compute the core of the $l$-partition $\ulambda^l$  as follows.  
We consider the  $l$-abacus $(X^{0} ,\ldots,X^{l-1} )$ of $\ulambda^l$ associated with the multicharge ${\bf s}^l$. 
An {\it  elementary operation} on this abacus is defined as a move of one black bead from one runner of the abacus to another satisfying the following rule.
\begin{enumerate}
	\item If this black bead is not in the top runner, then we can do such an elementary operation on this black bead only if there is no black bead immediately above (that is in the same position on the runner just above). In this case, we slide 
	the black bead from its initial position, in a runner $i$,   to the runner $i+1$ located above in the same position.
	\item If this black bead is in the top runner in position $x$,  then we can do such an elementary operation only if there is no black bead in  position $x-e$ on the lowest runner.
\end{enumerate}
  At the end (that is, when we cannot perform any further operations, by construction, we obtain an $l$-abacus
  which is canonically associated with an $l$-symbol  
$$(L_{0},\ldots,L_{l-1}),$$ 
satisfying:
$$L_{0}\subset L_{1}\subset \ldots \subset L_{l-1}
\subset L_{0}[e].$$
this corresponds to an $l$-partition $\umu^l$ and a multicharge ${\bf v}^l\in \overline{\mathcal{A}}^l_e$  such that 
 $(\umu^l,{\bf v}^l)$ is an $e$-core. 
  This is called the $e$-core  of $(\ulambda^l,{\bf s}^l)$. By the discussion above, we see that 
   doing an elementary operation consists in removing a removable node in the associated $e$-partition.  Thus, if
  the number $w(\ulambda^l)$  which is number of elementary operations needed to reach this core is also the size of this $e$-partition and this is called the weight of $(\ulambda^l,{\bf s}^l)$ (this has been first defined by Fayers \cite{Fa}).   The $e$-partition  $\ulambda_e$ may be seen as a natural analogue of the  $e$-quotient 
   for $\ulambda^l$. The $e$-multicharge ${\bf s}_e$ is called {\it the $e$-core multicharge} of $(\ulambda^l,{\bf s}^l)$ (and recall that this is bijectively associated with the 
    $e$-core of  $(\ulambda^l,{\bf s}^l)$, as in the case $l=1$).

  As a consequence, a core is a partition (with its multicharge) with weight $0$.  We have the following proposition which give a nice unified properties of  cores:
  \begin{Prop}\label{core}
  Let  ${\bf s}^l \in \mathcal{A}^l_e$ and $\ulambda^l \in \Pi^l$  be such that $(\ulambda^l,{\bf s}^l$ is an $e$-core. Then   all $i\in \{0,\ldots,e-1\}$,  we are in one of the following two situation :
   \begin{itemize}
   \item $\ulambda^l $ admits  no addable $i$-node.
  \item $\ulambda^l $ admits  no removable $i$-node.
  \end{itemize}
  \end{Prop}
  \begin{proof}
  Take the $e$-abacus of $(\ulambda^l ,{\bf s}^l )$ 
   and assume that $\ulambda^l $ admits an  addable $i$-node. Assume that  it is on component $c\in \{0,\ldots,l-1\}$. 
    Thus, we have a black bead in position  $j\in \mathbb{Z}$ such that $j\equiv i-1+e\mathbb{Z}$ in component $c$ 
     and 
     a white bead in component $c$ in position  $j+1$. As a consequence,  the position $j$ in component 
     $k>c$  are occupied with beads.  This is also the case for the   positions  $j-te$ with $t>0$ in all components. On the other hand, 
      the position $j+1$  in component $k<c$  and the position  $j+1+te$ with $t>0$  in all the component cannot be occupied by beads (otherwise, this contradicts the definition of $e$-core). This implies that there are no removable $i$-nodes. 
       If $\ulambda^l $ admits a removable $i$-node, the proof is similar.

  \end{proof}

 \begin{exa}
 Tale $\ulambda=((3,1),(2,1)$ with $e=3$ and ${\bf s}=(0,0)$. We have
 
 \begin{center}
\begin{tikzpicture}[scale=0.5, bb/.style={draw,circle,fill,minimum size=2.5mm,inner sep=0pt,outer sep=0pt}, wb/.style={draw,circle,fill=white,minimum size=2.5mm,inner sep=0pt,outer sep=0pt}]
	
	\node [] at (11,-1) {10};
	\node [] at (10,-1) {9};
	\node [] at (9,-1) {8};
	\node [] at (8,-1) {7};
	\node [] at (7,-1) {6};
	\node [] at (6,-1) {5};
	\node [] at (5,-1) {4};
	\node [] at (4,-1) {3};
	\node [] at (3,-1) {2};
	\node [] at (2,-1) {1};
	\node [] at (1,-1) {0};
	\node [] at (0,-1) {-1};
	\node [] at (-1,-1) {-2};
	\node [] at (-2,-1) {-3};
	\node [] at (-3,-1) {-4};
	\node [] at (-4,-1) {-5};
	\node [] at (-5,-1) {-6};
	\node [] at (-6,-1) {-7};
	\node [] at (-7,-1) {-8};
	\node [] at (-8,-1) {-9};
	\node [] at (-9,-1) {-10};
			\draw (-10,-1) node[]{$\ldots$};

	\node [wb] at (11,0) {};
	\node [wb] at (10,0) {};
	\node [wb] at (9,0) {};
	\node [wb] at (8,0) {};
	\node [wb] at (7,0) {};
	\node [wb] at (6,0) {};
	\node [wb] at (5,0) {};
	\node [wb] at (4,0) {};
	\node [bb] at (3,0) {};
	\node [wb] at (2,0) {};
	\node [wb] at (1,0) {};
	\node [bb] at (0,0) {};
	\node [wb] at (-1,0) {};
	\node [bb] at (-2,0) {};
	\node [bb] at (-3,0) {};
	\node [bb] at (-4,0) {};
	\node [bb] at (-5,0) {};
	\node [bb] at (-6,0) {};
	\node [bb] at (-7,0) {};
	\node [bb] at (-8,0) {};
	\node [bb] at (-9,0) {};
				\draw (-10,0) node[]{$\ldots$};

	\node [wb] at (11,1) {};
	\node [wb] at (10,1) {};
	\node [wb] at (9,1) {};
	\node [wb] at (8,1) {};
	\node [wb] at (7,1) {};
	\node [wb] at (6,1) {};
	\node [wb] at (5,1) {};
	\node [wb] at (4,1) {};
	\node [wb] at (3,1) {};
	\node [bb] at (2,1) {};
	\node [wb] at (1,1) {};
	\node [bb] at (0,1) {};
	\node [wb] at (-1,1) {};
	\node [bb] at (-2,1) {};
	\node [bb] at (-3,1) {};
	\node [bb] at (-4,1) {};
	\node [bb] at (-5,1) {};
	\node [bb] at (-6,1) {};
	\node [bb] at (-7,1) {};
	\node [bb] at (-8,1) {};
	\node [bb] at (-9,1) {};
				\draw (-10,1) node[]{$\ldots$};
	\end{tikzpicture}
\end{center}
We have $\ulambda_e= (\emptyset,(2),(1))$ thus the weight of $\ulambda$ is $3$  (the size of $\ulambda_e$) and the $e$-core multicharge is 
$(0,-1,1)$ with abacus:
 \begin{center}
\begin{tikzpicture}[scale=0.5, bb/.style={draw,circle,fill,minimum size=2.5mm,inner sep=0pt,outer sep=0pt}, wb/.style={draw,circle,fill=white,minimum size=2.5mm,inner sep=0pt,outer sep=0pt}]
	
	\node [] at (11,-1) {10};
	\node [] at (10,-1) {9};
	\node [] at (9,-1) {8};
	\node [] at (8,-1) {7};
	\node [] at (7,-1) {6};
	\node [] at (6,-1) {5};
	\node [] at (5,-1) {4};
	\node [] at (4,-1) {3};
	\node [] at (3,-1) {2};
	\node [] at (2,-1) {1};
	\node [] at (1,-1) {0};
	\node [] at (0,-1) {-1};
	\node [] at (-1,-1) {-2};
	\node [] at (-2,-1) {-3};
	\node [] at (-3,-1) {-4};
	\node [] at (-4,-1) {-5};
	\node [] at (-5,-1) {-6};
	\node [] at (-6,-1) {-7};
	\node [] at (-7,-1) {-8};
	\node [] at (-8,-1) {-9};
	\node [] at (-9,-1) {-10};
			\draw (-10,-1) node[]{$\ldots$};

	\node [wb] at (11,0) {};
	\node [wb] at (10,0) {};
	\node [wb] at (9,0) {};
	\node [wb] at (8,0) {};
	\node [wb] at (7,0) {};
	\node [wb] at (6,0) {};
	\node [wb] at (5,0) {};
	\node [wb] at (4,0) {};
	\node [wb] at (3,0) {};
	\node [wb] at (2,0) {};
	\node [wb] at (1,0) {};
	\node [bb] at (0,0) {};
	\node [bb] at (-1,0) {};
	\node [bb] at (-2,0) {};
	\node [bb] at (-3,0) {};
	\node [bb] at (-4,0) {};
	\node [bb] at (-5,0) {};
	\node [bb] at (-6,0) {};
	\node [bb] at (-7,0) {};
	\node [bb] at (-8,0) {};
	\node [bb] at (-9,0) {};
				\draw (-10,0) node[]{$\ldots$};

	\node [wb] at (11,1) {};
	\node [wb] at (10,1) {};
	\node [wb] at (9,1) {};
	\node [wb] at (8,1) {};
	\node [wb] at (7,1) {};
	\node [wb] at (6,1) {};
	\node [wb] at (5,1) {};
	\node [wb] at (4,1) {};
	\node [wb] at (3,1) {};
	\node [wb] at (2,1) {};
	\node [wb] at (1,1) {};
	\node [wb] at (0,1) {};
	\node [bb] at (-1,1) {};
	\node [bb] at (-2,1) {};
	\node [bb] at (-3,1) {};
	\node [bb] at (-4,1) {};
	\node [bb] at (-5,1) {};
	\node [bb] at (-6,1) {};
	\node [bb] at (-7,1) {};
	\node [bb] at (-8,1) {};
	\node [bb] at (-9,1) {};
				\draw (-10,1) node[]{$\ldots$};
	
	\node [wb] at (11,2) {};
	\node [wb] at (10,2) {};
	\node [wb] at (9,2) {};
	\node [wb] at (8,2) {};
	\node [wb] at (7,2) {};
	\node [wb] at (6,2) {};
	\node [wb] at (5,2) {};
	\node [wb] at (4,2) {};
	\node [wb] at (3,2) {};
	\node [wb] at (2,2) {};
	\node [bb] at (1,2) {};
	\node [bb] at (0,2) {};
	\node [bb] at (-1,2) {};
	\node [bb] at (-2,2) {};
	\node [bb] at (-3,2) {};
	\node [bb] at (-4,2) {};
	\node [bb] at (-5,2) {};
	\node [bb] at (-6,2) {};
	\node [bb] at (-7,2) {};
	\node [bb] at (-8,2) {};
	\node [bb] at (-9,2) {};
				\draw (-10,2) node[]{$\ldots$};
				\draw[dashed](-7.5,-0.5)--node[]{}(-7.5,2.5);\draw (-10,2) node[]{$\ldots$};
				\draw[dashed](-5.5,-0.5)--node[]{}(-5.5,2.5);\draw (-10,2) node[]{$\ldots$};
				\draw[dashed](-3.5,-0.5)--node[]{}(-3.5,2.5);\draw (-10,2) node[]{$\ldots$};
				\draw[dashed](-1.5,-0.5)--node[]{}(-1.5,2.5);\draw (-10,2) node[]{$\ldots$};
				\draw[dashed](0.5,-0.5)--node[]{}(0.5,2.5);\draw (-10,2) node[]{$\ldots$};
				\draw[dashed](2.5,-0.5)--node[]{}(2.5,2.5);\draw (-10,2) node[]{$\ldots$};
				\draw[dashed](4.5,-0.5)--node[]{}(4.5,2.5);\draw (-10,2) node[]{$\ldots$};
				\draw[dashed](6.5,-0.5)--node[]{}(6.5,2.5);\draw (-10,2) node[]{$\ldots$};				
				\draw[dashed](8.5,-0.5)--node[]{}(8.5,2.5);\draw (-10,2) node[]{$\ldots$};				
	\end{tikzpicture}
\end{center}
The $l$-abacus of the associated $e$-core multipartition is :

 \begin{center}
\begin{tikzpicture}[scale=0.5, bb/.style={draw,circle,fill,minimum size=2.5mm,inner sep=0pt,outer sep=0pt}, wb/.style={draw,circle,fill=white,minimum size=2.5mm,inner sep=0pt,outer sep=0pt}]
	
	\node [] at (11,-1) {10};
	\node [] at (10,-1) {9};
	\node [] at (9,-1) {8};
	\node [] at (8,-1) {7};
	\node [] at (7,-1) {6};
	\node [] at (6,-1) {5};
	\node [] at (5,-1) {4};
	\node [] at (4,-1) {3};
	\node [] at (3,-1) {2};
	\node [] at (2,-1) {1};
	\node [] at (1,-1) {0};
	\node [] at (0,-1) {-1};
	\node [] at (-1,-1) {-2};
	\node [] at (-2,-1) {-3};
	\node [] at (-3,-1) {-4};
	\node [] at (-4,-1) {-5};
	\node [] at (-5,-1) {-6};
	\node [] at (-6,-1) {-7};
	\node [] at (-7,-1) {-8};
	\node [] at (-8,-1) {-9};
	\node [] at (-9,-1) {-10};
			\draw (-10,-1) node[]{$\ldots$};

	\node [wb] at (11,0) {};
	\node [wb] at (10,0) {};
	\node [wb] at (9,0) {};
	\node [wb] at (8,0) {};
	\node [wb] at (7,0) {};
	\node [wb] at (6,0) {};
	\node [wb] at (5,0) {};
	\node [wb] at (4,0) {};
	\node [wb] at (3,0) {};
	\node [wb] at (2,0) {};
	\node [wb] at (1,0) {};
	\node [bb] at (0,0) {};
	\node [wb] at (-1,0) {};
	\node [bb] at (-2,0) {};
	\node [bb] at (-3,0) {};
	\node [bb] at (-4,0) {};
	\node [bb] at (-5,0) {};
	\node [bb] at (-6,0) {};
	\node [bb] at (-7,0) {};
	\node [bb] at (-8,0) {};
	\node [bb] at (-9,0) {};
				\draw (-10,0) node[]{$\ldots$};

	\node [wb] at (11,1) {};
	\node [wb] at (10,1) {};
	\node [wb] at (9,1) {};
	\node [wb] at (8,1) {};
	\node [wb] at (7,1) {};
	\node [wb] at (6,1) {};
	\node [wb] at (5,1) {};
	\node [wb] at (4,1) {};
	\node [bb] at (3,1) {};
	\node [wb] at (2,1) {};
	\node [wb] at (1,1) {};
	\node [bb] at (0,1) {};
	\node [bb] at (-1,1) {};
	\node [bb] at (-2,1) {};
	\node [bb] at (-3,1) {};
	\node [bb] at (-4,1) {};
	\node [bb] at (-5,1) {};
	\node [bb] at (-6,1) {};
	\node [bb] at (-7,1) {};
	\node [bb] at (-8,1) {};
	\node [bb] at (-9,1) {};
				\draw (-10,1) node[]{$\ldots$};
	\end{tikzpicture}
\end{center}
The $e$-core multipartition is thus $((1),(2))$ with multicharge $(-1,1)$. 
 \end{exa}
\begin{exa}
The converse proposition is false even if $l=1$. Take $\lambda=(4)$ with $e=3$. This is not an $e$-core but we have one removable $0$-node with no removable $0$-node.  We have one addable $1$-node with no removable $1$-node.  We have one addable $2$-node but no removable $1$-node. 
 $$\ytableausetup{centertableaux}
\begin{ytableau}
0 & 1  & 2 & 0
\end{ytableau}$$
 
\end{exa}

\begin{Rem}\label{mvide}
Assume that the $e$-core multicharge ${\bf s}_e=(s_0,\ldots,s_{e-1})$ of a pair $(\ulambda^l,{\bf s}^l)$ satisfies:
$$l>s_0 \geq \ldots \geq s_{e-1} \geq 0$$
Then this implies that the $l$-abacus associated to $(\emptyset,{\bf s}_e)$ is such that for all bead in position $j$, we have a bead in position $j-1$. This implies that this is the $l$-abacus of the empty $l$-partition. As a consequence, the $e$-core of $(\ulambda^l,{\bf s}^l)$ is of the form $(\emptyset,{{\bf s}^l} ')$ for a certain 
 multicharge ${{\bf s}^l}'$. 
\end{Rem}

\section{Blocks of Ariki-Koike algebras}

In this section, we recall the main objects around  the modular representation theory of Ariki-Koike algebras. We refer to \cite{Ar,GJ} for details. Then we use our previous combinatorial definition to describe  the notion of blocks   for Ariki-Koike algebras as already stated in \cite{JL} and explore basic properties around them.

\subsection{Ariki-Koike algebras}

Let $n\in \mathbb{N}$, $l\in \mathbb{N}$, $e\in \mathbb{N}_{>1}$ and let ${\bf s}^l=(s_0,\ldots,s_{l-1})\in \mathbb{Z}^l$.  We set $\eta_e:=\operatorname{exp} (2i\pi/e)$.  The Ariki-Koike algebra
 $\mathcal{H}_n^{{\bf s}^l} (\eta)$ is the unital associative  $\mathbb{C}$-algebra with
 \begin{itemize}
\item   generators $T_1,\ldots,T_{n-1}$,
\item  relations: 
\begin{align*}
& T_0 T_1 T_0 T_1=T_1 T_0 T_1 T_0, \\
& T_iT_{i+1}T_i=T_{i+1}T_i T_{i+1}\ (i=1,...,n-2), \\
& T_i T_j =T_j T_i\ (|j-i|>1), \\
&(T_0-\eta^{s_0})(T_0-\eta^{s_2})...(T_0- \eta^{s_{l-1}}) = 0, \\
&(T_i-\eta)(T_i+1) = 0\ (i=1,...,n-1).
\end{align*}
\end{itemize}
 The representation theory of  $\mathcal{H}_n^{{\bf s}^l}  (\eta)$ 
is controlled by its decomposition matrix. 
For all $\ulambda^l\in \Pi^l$; one can associate a certain finite dimensional 
$\mathcal{H}_n^{{\bf s}^l} (\eta)$-module $S^{\ulambda}$ called a Specht module. For each $M\in \operatorname{Irr} (\mathcal{H}_n^{{\bf s}^l}(\eta))$, we have the composition factor 
$[S^{\ulambda^l}:M]$. The matrix:
$$\mathcal{D}:=([S^{\ulambda^l}:M])_{\ulambda^l \in \Pi^{l}(n),M\in \operatorname{Irr} (\mathcal{H}_n^{{\bf s}^l}(\eta))}$$
is  the {\it decomposition matrix}.  

\subsection{Parametrizing the simple modules} The study of the parametrization of the simple modules for Ariki-Koike algebras have a long story and there are different way to solve this. We here use the concept of basic sets, notion that we quickly recall.  There exists a natural pre-ordrer $\prec_{{\bf s}^l}$ on the set of $l$-partitions (see \cite{GJ}).  Then by \cite{GJ}, it can be shown that 
 there exists a  subset $\Phi^{{\bf s}^l} (n)\subset \Pi^{l} (n)$  and a bijective map 
 $$\mathcal{F} : \operatorname{Irr} (\mathcal{H}_n^{{\bf s}^l}(\eta)) \to \Phi^{{\bf s}^l} (n),$$
 such that 
for all  $M\in \operatorname{Irr} (\mathcal{H}_n^{{\bf s}^l}(\eta))$ we have 
$$[S^{ \mathcal{F} (M)}:M]=1 \text{ and } [S^{\umu}:M_i]\neq 0 \text{ only if }\umu \prec_{{\bf s}^l} \mathcal{F} (M).$$
The set $\Phi^{{\bf s}^l} (n) $ thus gives a natural indexation for the set of simple modules. These $l$-partitions are known as Uglov $l$-partitions.  If ${\bf s}^l \in \mathcal{A}_e^l$, then they are known as FLOTW $l$-partitions and they have an easy non recursive definition. This is not the case in general. However,  one can go from one parametrization to another thanks to an easy algorithm that we will describe later.

\subsection{Uglov $l$-partitions} Let us give a quick definition of the set of Uglov $l$-partitions.  For two nodes, we write  $\gamma<_{({\bf s}^l,e)}\gamma'$
 if we have  $b-a+s_c<b'-a'+s_{c'}\ \textrm{or } \textrm{if}\ b-a+s_c=b'-a'+s_{c'}\textrm{ and }c>c'.$ 
 
Let  ${{\ulambda}^l}$ be an  $l$-partition. We can consider its set of addable and
removable $i$-nodes. Let $w_{i}(\ulambda^l)$ be the word obtained first by writing the
addable and removable $i$-nodes of ${{\ulambda^l}}$ in {increasing}
order with respect to $\prec _{(e,{{{\bf s}^l}})}$ 
next by encoding each addable $i$-node by the letter $A$ and each removable $%
i$-node by the letter $R$.\ Write $\widetilde{w}_{i}(\ulambda^l,{\bf s}^l)=A^{p}R^{q}$ for the
word derived from $w_{i}$ by deleting as many  subwords of type $RA$ as
possible. $w_{i}(\ulambda^l,{\bf s}^l)$ is called the  {\it $i$-signature}  of $(\ulambda^l,{\bf s}^l)$ and 
 $\widetilde{w}_{i}(\ulambda^l)$ the  {\it reduced $i$-signature} of $(\ulambda^l,{\bf s}^l)$ . 
The addable $i$-nodes in  $\widetilde{w}_{i}(\ulambda^l,{\bf s}^l)$ are called the 
  {\it normal addable $i$-nodes}.  
The removable $i$-nodes in  $\widetilde{w}_{i}(\ulambda^l,{\bf s}^l)$ are called the 
  {\it normal removable $i$-nodes}. 
If $p>0,$ let $\gamma $ be the rightmost addable $i$-node in $\widetilde{w}_{i}(\ulambda^l,{\bf s}^l)$. The node $%
\gamma $ is called the  {\it good addable $i$-node}. If $q>0$, the leftmost removable $i$-node in 
$\widetilde{w}_{i}(\ulambda^l,{\bf s}^l)$ is called the  {\it good removable $i$-node}. Note that this notion depends on the order $<_{e,{\bf s}^l}$ and thus on the choice of ${\bf s}\in \Z^l$. 

The set $\Phi^{{\bf s}^l}$ of  {\it Uglov $l$-partitions}  is defined   recursively  as follows.
\begin{itemize}
   \item We have $\uemptyset:=(\emptyset,\emptyset,\ldots ,\emptyset)\in{ \Phi^{{\bf s}^l}}$.
    \item If $\ulambda^l\in\Phi^{{\bf s}^l}$ with $\ulambda^l \neq \uemptyset$, there exist $i\in{\{0,\ldots ,e-1\}}$ and a good removable $i$-node $\gamma$ such that if we remove $\gamma$ from  $\ulambda^l$, the resulting  $l$-partition is in $\Phi^{{\bf s}^l}$. We then denote  $\widetilde{f}_i .\ulambda^l =\umu^l$, or equivalently $\widetilde{e}_i .\umu^l =\ulambda^l$
\end{itemize}

\subsection{Affine symmetric groups}\label{algoiso}
Let $\widetilde{\mathfrak{S}}_r$ be the affine symmetric group.  This is the Coxeter group with a presentation by 
\begin{enumerate}
\item generators: $\sigma_i$, $i=0,\ldots,r-1$,
\item relations: for all indices $i$  and $j$ (which are read modulo $e$):
$$\sigma_i \sigma_{i+1} \sigma_i = \sigma_{i+1} \sigma_{i} \sigma_{i+1} ,$$
$$\sigma_i \sigma_j =\sigma_j \sigma_i\ (\text{if }i-j \neq 1+r\mathbb{Z}),$$
$$\sigma_i^2=1.$$
\end{enumerate}

The extended affine symmetric group $\widehat{\mathfrak{S}}_r$  is the semi-direct product $\widetilde{\mathfrak{S}_r}\rtimes \langle \tau \rangle$
 where $\langle \tau \rangle\simeq \mathbb{Z}$ where the product is defined by the relation $\tau \sigma_{i} =\sigma_{i+1} \tau$. 
Then  then $\widehat{\mathfrak{S}}_r$, is generated by $\tau$ and the 
$\sigma_i$ for $i=1,\ldots,r-1$.
We will now consider two types of action:
\begin{enumerate}
\item  We have that $\widehat{\mathfrak{S}}_e$, acts faithfully on $\mathbb{Z}^e$  (at the left) as follows: for any ${{{\bf s}}}_e=(s_{0},\ldots ,s_{e-1})\in 
\mathbb{Z}^{e}$, we have that%
$$\begin{array}{rcll}
\sigma _{c}.{{{\bf s}}}_e&=&(s_{0},\ldots ,s_{c-1},s_{c},\ldots ,s_{e-1})&\text{for }c=1,\ldots,l-1 \text{ and }\\
\tau . {{{\bf s}}}_e &=&(s_{e-1}+l,s_0,\ldots,s_{e-2})\end{array}.$$
Note that we have 
$$\sigma_0.  {\bf s}_e =(s_{e-1}+l,s_{1},\ldots, s_{e-2},s_{0}-l).$$
\item  We have that $\widehat{\mathfrak{S}}_l$ acts faithfully on $\mathbb{Z}^l$  (at the right) as follows: for any ${{{\bf s}}}^l=(s_{0},\ldots ,s_{l-1})\in 
\mathbb{Z}^{l}$, we have that%
$$\begin{array}{rcll}
{{{\bf s}}}^l.\sigma _{c}&=&(s_{0},\ldots ,s_{c-1},s_{c},\ldots ,s_{l})&\text{for }c=1,\ldots,l-1 \text{ and }\\
{{{\bf s}}}^l. \tau &=&(s_1,\ldots,s_{l-1},s_{0}+e)\end{array}.$$
Note that we have 
$$ {\bf s}^l. \sigma_0 =(s_{l-1}-e,s_{1},\ldots, s_{l-2},s_{0}+e).$$
\end{enumerate}

If ${\bf s}^l$ and ${\bf s'}^l$ are in the same orbit modulo the action of $\widehat{\mathfrak{S}}_l$, then the Ariki-Koike algebras 
 $\mathcal{H}_n^{{\bf s}^l}(\eta)$ and $\mathcal{H}_n^{{\bf s'}^l}(\eta)$ are isomorphic. This induces a 
 bijection: 
 $$\Psi^{{\bf s}^l \to {\bf s'}^l}:\Phi^{{\bf s}^l} \to \Phi^{{\bf s '}^l}.$$
In fact, this bijection is the restriction of a bijection:
 $$\Psi^{{\bf s}^l \to {\bf s'}^l}:\Pi^l  \to \Pi^l,$$
which  has been described in \cite{JL0} and may be described
just by describing two types of bijections :
 $$\Psi^{{\bf s}^l \to {\bf s}^l.\tau }\text{ and }\Psi^{{\bf s}^l \to {\bf s}^l.\sigma_i}.$$
\begin{Prop}[J-Lecouvey \cite{JL}]\label{tau}
For all ${\bf s}^l=(s_0,\ldots,s_{l-1})\in \mathbb{Z}^l$ and $\ulambda^l \in\Phi^{{\bf s}^l}$  we have 
$$\Psi^{{\bf s}^l \to  {\bf s}^l.\tau}  (\ulambda^l)=(\lambda^1,\ldots,\lambda^{l-1},\lambda^0).$$
\end{Prop}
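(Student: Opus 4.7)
The plan is to realize $\Psi^{{\bf s}^l \to {\bf s}^l.\tau}$ as the explicit cyclic shift $\sigma(\ulambda^l)=(\lambda^1,\ldots,\lambda^{l-1},\lambda^0)$ on multipartitions, by exploiting that $\Psi$ is uniquely characterized as the crystal isomorphism $\Phi^{{\bf s}^l}\to\Phi^{{\bf s}^l.\tau}$ sending $\uemptyset$ to $\uemptyset$ and intertwining the Kashiwara operators $\tilde{e}_i,\tilde{f}_i$. That $\sigma$ sends $\uemptyset$ to $\uemptyset$ is immediate, so the entire content of the proposition is to verify that $\sigma$ commutes with the $\tilde{e}_i$ and $\tilde{f}_i$; this will in particular force $\sigma(\Phi^{{\bf s}^l})=\Phi^{{\bf s}^l.\tau}$, and the statement will follow by uniqueness.

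The natural setting for this verification is the Uglov $l$-abacus. If $(X_0,\ldots,X_{l-1})$ is the $l$-symbol of $(\ulambda^l,{\bf s}^l)$, then using $X^{s_0+e}(\lambda^0)=X^{s_0}(\lambda^0)[e]$ one checks directly that the $l$-symbol of $(\sigma(\ulambda^l),{\bf s}^l.\tau)$ is exactly $(X_1,\ldots,X_{l-1},X_0[e])$: the runners are cyclically rotated, with the runner arriving at the top shifted rightwards by $e$ positions. Because addability and removability of an $i$-node is a local black-and-white condition on a single runner, and because shifting a runner by $e$ leaves the residue modulo $e$ of each bead position invariant, the sets of addable and removable $i$-nodes of $(\ulambda^l,{\bf s}^l)$ and of $(\sigma(\ulambda^l),{\bf s}^l.\tau)$ are in canonical bijection.

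The main obstacle is to compare the two orderings $\prec_{(e,{\bf s}^l)}$ and $\prec_{(e,{\bf s}^l.\tau)}$ on these matched $i$-nodes. Under $\sigma$, a node on a component $c\ge 1$ at content $k$ keeps its content but moves to component $c-1$, whereas a node on component $0$ at content $k$ has content raised to $k+e$ and moves to component $l-1$. This reshuffles the $i$-signature in a precise way: each original content-$k$ block of $i$-nodes has its component-$0$ entry transplanted to the end of the content-$(k+e)$ block. The key combinatorial verification is that this permutation of the signature leaves the reduced word $A^pR^q$ unchanged, and in particular sends the rightmost $A$ to the rightmost $A$ and the leftmost $R$ to the leftmost $R$; equivalently, it preserves the good addable and good removable $i$-nodes. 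Granted this matching, an induction on $|\ulambda^l|$ using the recursive definition of Uglov multipartitions via good removable nodes yields $\sigma\circ\tilde{f}_i=\tilde{f}_i\circ\sigma$ on $\Phi^{{\bf s}^l}$, and the uniqueness of the crystal isomorphism gives $\sigma=\Psi^{{\bf s}^l \to {\bf s}^l.\tau}$.
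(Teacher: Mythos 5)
The paper itself offers no proof of this Proposition (it is imported from \cite{JL}), so I judge your proposal on its own terms. Your overall strategy is the right one and is the one used in \cite{JL0}: characterize $\Psi^{{\bf s}^l\to{\bf s}^l.\tau}$ as the unique crystal isomorphism sending $\uemptyset$ to $\uemptyset$, and check that the cyclic shift $\sigma$ intertwines $\widetilde{e}_i$ and $\widetilde{f}_i$; your identification of the shifted $l$-symbol as $(X_1,\ldots,X_{l-1},X_0[e])$ and of the induced residue-preserving bijection on nodes is also correct. The gap is in the step you yourself flag as ``the key combinatorial verification'' and then do not carry out: for the permutation of the $i$-signature you describe (the component-$0$ letter of each content-$k$ block transplanted to the end of the content-$(k+e)$ block), the assertion that the reduction and the extremal letters are preserved is \emph{false}, already on Uglov multipartitions. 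Take $l=e=2$, ${\bf s}^l=(0,1)$ and $\ulambda^l=((1),\emptyset)=\widetilde{f}_0\,\uemptyset$. Its addable $1$-nodes are $(2,1,0),(1,2,0),(1,1,1)$ with contents $-1,1,1$; with the order as printed in the paper (ties broken by $c<c'$) the word is $A_{(2,1,0)}A_{(1,2,0)}A_{(1,1,1)}$, whose rightmost letter is $(1,1,1)$. For $\sigma\ulambda^l=(\emptyset,(1))$ with multicharge $(1,2)$ the addable $1$-nodes are $(1,1,0),(2,1,1),(1,2,1)$ with contents $1,1,3$, giving the word $A_{(1,1,0)}A_{(2,1,1)}A_{(1,2,1)}$, whose rightmost letter is $(1,2,1)$, which corresponds under your node bijection to $(1,2,0)$, not to $(1,1,1)$. (Neither is the leftmost letter preserved.) Consequently $\widetilde{f}_1\ulambda^l=((1),(1))$ while $\widetilde{f}_1(\sigma\ulambda^l)=(\emptyset,(2))\neq\sigma(((1),(1)))$: the commutation you need fails for the permutation you describe, and no induction can recover it.

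The source of the trouble is the tie-breaking in $<_{({\bf s}^l,e)}$. Under the convention actually used in \cite{JL0} and \cite{GJ}, nodes of equal content are ordered by \emph{decreasing} component index (the displayed definition in the present paper appears to have this reversed). With that convention the component-$0$ letter sits at the \emph{end} of its content block, and $\sigma$ sends it to the component-$(l-1)$ slot of content $k+e$, i.e.\ to the \emph{head} of the immediately following block --- so it does not move at all inside the linear word. The $i$-signature of $(\sigma\ulambda^l,{\bf s}^l.\tau)$ is then letter-for-letter that of $(\ulambda^l,{\bf s}^l)$ under the canonical bijection of nodes, all of $\varepsilon_i$, $\varphi_i$ and the good nodes are preserved on the nose, and your induction closes immediately. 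So the proof is salvageable, but the salvage consists precisely in getting the order right, after which your ``key verification'' is vacuous rather than the nontrivial rearrangement lemma you appeal to; as written, that lemma is the missing (and, for your permutation, false) step.
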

Now assume that $i\in \{1,\ldots,l-1\}$, then we will describe $\Psi^{{\bf s}^l \to {\bf s}^l.\sigma_{i} }$ as it is stated in \cite{JL}. We have 
$$\Psi^{{\bf s}^l \to {\bf s}^l. \sigma_{i}} (\ulambda)=(\lambda^0,\ldots,\widetilde{\lambda}^{i},\widetilde{\lambda}^{i-1},\ldots,
\lambda^{l-1})$$
and we explain now how one can obtain $(\widetilde{\lambda}^{i},\widetilde{\lambda}^{i-1})$ 
 from   $({\lambda}^{i-1}, {\lambda}^{i})$.   To do this, consider the symbol associated 
  with $\ulambda^l$:
  $$(X^0,\ldots,X^{l-1})$$
The description of the bijections   
essentially rests on the following basic procedure on the pair $(X^{i-1} ,X^{i})$.
  Let $X^{i-1}=[,\ldots, x_{1},\ldots ,x_{r}]$ and 
$X^{i}=[\ldots, y_{1},\ldots ,y_{s}]$ where we assume that $x_k=y_k$ if $k<1$. 
 We
compute from $(X^{i-1} ,X^{i})$  a new pair $({X^{i-1}} ' ,{X^{i} }')$ with ${X^{i}} '=[\ldots,x_{-1},x_0,x_{1}^{\prime },\ldots ,x_{r}^{\prime }]$ and ${X^{i-1}}'=[\ldots,y_{-1},y_0,y_{1}^{\prime
},\ldots ,y_{s}^{\prime }]$ of such sequences by applying the following
algorithm :

\begin{itemize}
\item  Assume $r\geq s$. We associate to $y_{1}$ the integer $x_{i_{1}}\in X^{i-1} $
such that 
\begin{equation}
x_{i_{1}}=\left\{ 
\begin{array}{l}
\mathrm{min}\{x\in X^i \mid y_{1}\leq x\}\text{ if }y_{1}\geq x_{1} \\ 
x_{r}\text{ otherwise}
\end{array}
\right. .  \label{algo1}
\end{equation}
We say that $y_1$ and $x_{i_1}$ are in pairs. 
We repeat the same procedure to the ordered pair $(X^{i-1} \setminus
\{x_{i_{1}}\},X^{i}\setminus \{y_{1}\}).$ By induction this yields a subset $%
\{x_{i_{1}},\ldots ,x_{i_{s}}\}\subset X^{i-1} .\;$Then we define ${X^{i-1}} '$ as
the increasing reordering $\{x_{i_{1}},\ldots ,x_{i_{s}}\}$ and ${X^{i}} '$
as the increasing reordering of $X^{i-1} \setminus \{x_{i_{1}},\ldots
,x_{i_{s}}\}\sqcup X^{i}.$

\item  Assume $r<s.\;$We associate to $x_{1}$ the integer $y_{i_{1}}\in X^{i}$
such that 
\begin{equation}
y_{i_{1}}=\left\{ 
\begin{array}{l}
\mathrm{max}\{y\in X^{i}\mid x_{1}\geq y\}\text{ if }x_{1}\geq y_{1} \\ 
y_{1}\text{ otherwise}
\end{array}
\right. .  \label{algo2}
\end{equation}
We say that $x_1$ and $y_{i_1}$ are in pairs. 
We repeat the same procedure to the ordered sequences $X^{i-1} \setminus \{x_{1}\}$
and $X^{i} \setminus \{y_{i_{1}}\}$ and obtain a subset $\{y_{i_{1}},\ldots
,y_{i_{r}}\}\subset X^{i}.\;$Then we define ${X^{i}} '$ as the increasing
reordering $\{y_{i_{1}},\ldots ,y_{i_{r}}\}$ and ${X^{i-1}  }'$ as the
increasing reordering of $X^{i} \setminus \{y_{i_{1}},\ldots ,y_{i_{r}}\}\sqcup
U. $
\end{itemize}

\begin{Prop}[J-Lecouvey \cite{JL}]
Under the above notation, if $\ulambda^l \in \Phi^{{\bf s}^l}$, then we have $\umu^l=\Psi^{{\bf s}^l \to{\bf s}^l .\sigma_{i} } (\ulambda^l)$ 
 where the $el$-symbol of $\umu^l$ is $(X^0,\ldots, {X^{i}}',{X^{i-1}} ',\ldots, X^{l-1})$.
\end{Prop}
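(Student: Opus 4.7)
The strategy I would adopt is to characterize $\Psi^{{\bf s}^l \to {\bf s}^l.\sigma_i}$ as the unique isomorphism of $\widehat{\mathfrak{sl}}_e$-crystals between $\Phi^{{\bf s}^l}$ and $\Phi^{{\bf s}^l.\sigma_i}$ that sends $\uemptyset$ to $\uemptyset$. Existence and uniqueness of such an isomorphism is standard: the Uglov crystal $\Phi^{{\bf s}^l}$ is the connected component of the empty multipartition in the crystal of the Fock space associated with ${\bf s}^l$, and its isomorphism class depends only on the $\widehat{\mathfrak{S}}_l$-orbit of ${\bf s}^l$. It then suffices to prove that the map $\Theta$ defined on $l$-symbols by applying the algorithm to the pair $(X_{i-1},X_i)$ and leaving the other $X_c$ fixed satisfies these two characterizing properties.

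The verification that $\Theta$ is a well-defined bijection from $\Pi^l$ to itself sending $(\uemptyset,{\bf s}^l)$ to $(\uemptyset,{\bf s}^l.\sigma_i)$ is a direct computation. When $\ulambda^l=\uemptyset$, each $X_c$ is the trivial symbol of charge $s_c$, and the insertion procedure reduces to swapping $X_{i-1}$ and $X_i$ entirely, producing trivial symbols of charges $s_i$ and $s_{i-1}$. A short count based on the rules (\ref{algo1})--(\ref{algo2}) also shows that for general $\ulambda^l$ the resulting subsymbols $X'_{i-1}$ and $X'_i$ have the desired charges $s_i$ and $s_{i-1}$, so the resulting multicharge is indeed ${\bf s}^l.\sigma_i$. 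Invertibility of the procedure (applied with the roles of the two subsymbols reversed) shows that $\Theta$ is a bijection on $\Pi^l$.

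The hard part, and the core of the proof, is showing that $\Theta$ commutes with the Kashiwara operators $\widetilde{f}_j$, $\widetilde{e}_j$ for every $j\in\{0,\ldots,e-1\}$. For a fixed residue $j$, the action of $\widetilde{f}_j$ is determined by the good addable $j$-node of $\ulambda^l$ with respect to $\prec_{(e,{\bf s}^l)}$, while after applying $\Theta$ one must use $\prec_{(e,{\bf s}^l.\sigma_i)}$. Since $\Theta$ only modifies the beads in runners $i-1$ and $i$, one reduces to checking that the normal and good $j$-nodes supported on these two runners are correctly tracked through (\ref{algo1})--(\ref{algo2}): the matching produced by the algorithm precisely pairs beads of equal content across the two subsymbols, while the swap of $s_{i-1}$ and $s_i$ in the new order exchanges their relative position in the $j$-signature. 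A careful case analysis then shows that the reduced $j$-signature $\widetilde{w}_j$ is preserved under $\Theta$ and that good addable (resp.\ removable) $j$-nodes are sent to good addable (resp.\ removable) $j$-nodes. Once this commutation is established, induction on $|\ulambda^l|$ via the $\widetilde{f}_j$ shows that $\Theta$ restricts to a crystal bijection $\Phi^{{\bf s}^l}\to \Phi^{{\bf s}^l.\sigma_i}$, hence coincides with $\Psi^{{\bf s}^l\to{\bf s}^l.\sigma_i}$.
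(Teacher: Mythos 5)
The paper does not prove this Proposition: it is quoted verbatim from Jacon--Lecouvey (\cite{JL}, building on \cite{JL0}), so there is no internal proof to compare against. Your strategy --- characterize $\Psi^{{\bf s}^l\to{\bf s}^l.\sigma_i}$ as the unique $\widehat{\mathfrak{sl}}_e$-crystal isomorphism sending $\uemptyset$ to $\uemptyset$, then verify that the combinatorial map $\Theta$ on the pair $(X_{i-1},X_i)$ is a bijection with the right multicharge and commutes with all $\widetilde{e}_j,\widetilde{f}_j$ --- is exactly the approach of the cited source, and it is the right one. Two caveats. First, the entire mathematical content lives in the step you defer to ``a careful case analysis'': showing that the reduced $j$-signature and the good $j$-nodes are preserved under $\Theta$ for every residue $j$ (including $j$ not equal to $i$, since moving beads between runners $i-1$ and $i$ changes which contents carry addable/removable nodes in those components). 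As written, this is a plan rather than a proof. Second, your description of the mechanism is inaccurate: the algorithm does not pair beads of equal content; it pairs $y_1$ with $\max\{x\in X_{i-1}\mid x\leq y_1\}$ (or with the largest remaining element when no such $x$ exists), and it is precisely this greedy, content-decreasing matching --- not an equal-content matching --- that makes the signature bookkeeping work. That misstatement would derail the case analysis if carried out literally, so it should be corrected before the verification is attempted.
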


\begin{exa}
Take $\ulambda=  ((8, 2, 1),( 4, 3, 2 ))$ with ${\bf s}=(0,1)$ then the associates symbol is :
$$\left( \begin{array}{ccccccc}
                            \ldots &-5&-4&\textcolor{blue}{-3}&\textcolor{red}{0}&2&\textcolor{yellow}{4} \\                        
\ldots &-5 &  -4&\textcolor{red}{-2}&\textcolor{blue}{0}&\textcolor{yellow}{7 }
\end{array} \right)$$
So $-2$ in $X^0$  is in pair with $0$ in $X^1$, $0$ in $X^0$  is in pair with $-3$ in $X^1$ and $7$ in $X^0$ is in pair with $4$. 
 The new symbol is:
 $$\left( \begin{array}{ccccccc}
                            \ldots &-5&-4&-3&0&4 \\                        
\ldots &-5 &  -4&-2&0& 2 & 7 
\end{array} \right)$$
which is the symbol of $((7,3,2,1),(5,2))$. Thus we have 
$$\Psi^{(0,1) \to (1,0)} ((8, 2, 1),( 4, 3, 2 ))= ((7,3,2,1),(5,2))$$

\end{exa}

\subsection{Blocks} We fix a multicharge ${\bf s}^l$ and the associated Ariki-Koike algebra $\mathcal{H}_n^{{\bf s}^l}(\eta)$ . 
By definition, two $l$-partitions $\ulambda^l$ and $\umu^l$ lie in the {\it same  ordinary block} if 
there exists a sequence $(M_1,\ldots,M_r)$  of simple ${\mathbb{F}}\mathcal{H}^{{\bf s}}_n(\eta)$-modules and 
a sequence  of $l$-partitions $(\ulambda[1],\ldots,\ulambda[{r+1}])$ with $\ulambda[1]=\ulambda^l$, $\ulambda[{r+1}]=\umu^l$ and 
for all $i\in \{1,\ldots,r\}$, we have $[S^{\ulambda[i]}:M_i]\neq 0$ and 
$[S^{\ulambda[{i+1}]}:M_i]\neq 0$.  
The ordinary blocks are the thus equivalence classes under the above equivalence class. Thus, we have a partition of the set of $l$-partitions by the ordinary blocks:
$$\Pi^l (n)=\mathcal{B}_1 \sqcup \ldots \sqcup \mathcal{B}_r.$$
Similarly, we have a notion of {\it modular blocks} : two $l$-partitions in $\Phi^{{\bf s}^l} (n) $ are in the same ordinary block 
 if and only  there exist a sequence of 
 $(M_1,\ldots,M_r)$ of simple $\mathcal{H}_n^{{\bf s}}(\eta)$-modules
  with $\mathcal{F} (M_1)=\ulambda^l$ and $\mathcal{F} (M_r)=\umu^l$ 
  and 
a sequence  of $l$-partitions $(\ulambda[1],\ldots,\ulambda[{r+1}])$ $\ulambda[1]=\ulambda^l$, $\ulambda[{r+1}]=\umu^l$ such that we have $[S^{\ulambda[i]}:M_i]\neq 0$ and 
$[S^{\ulambda[{i+1}]}:M_i]\neq 0$.  
Hence a block may be think as a couple $(\mathcal{B},\mathfrak{B})$ where $\mathcal{B}$ is a set of $l$-partitions and 
$\mathfrak{B}$ is a set of  Uglov $l$-partitions. We have $\mathfrak{B}\subset \mathcal{B}$. So the datum of the ordinary block suffices to obtain the modular blocks.  
  We will now see in details how one can describe these blocks.  The following is the main Theorem of \cite{JL}.

 \begin{Th}
 Let ${\bf s}^l\in  \mathcal{A}^l_e$ then 
 two $l$-partitions  $\ulambda^l$ and $\umu^l$ are in the same block if and only if they have the same $e$-core.   Moreover, all the $l$-partitions in the same block have the same weight which is thus called the weight of the block. 
 \end{Th}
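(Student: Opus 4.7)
The strategy is to combine the Lyle--Mathas description of blocks of Ariki--Koike algebras \cite{LM} with a careful bookkeeping of residue counts under the elementary operations used to compute the generalized $e$-core, and to deduce the weight statement from Proposition~\ref{rank}.

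By \cite{LM}, two $l$-partitions of $n$ lie in the same block of $\mathcal{H}_n^{{\bf s}^l}(\eta)$ if and only if, for every residue class $i\in\mathbb{Z}/e\mathbb{Z}$, the numbers $N_i:=N_i(\ulambda^l,{\bf s}^l)$ of $i$-nodes coincide. The combinatorial heart of the proof is the identity $N_i(\ulambda^l,{\bf s}^l)=c_i+w$ for every $i$, where $(c_i)$ are the residue counts of the $e$-core of $(\ulambda^l,{\bf s}^l)$ and $w$ is its weight. To prove this, I would show that each elementary operation on the $l$-abacus used in the reduction to the core lowers every $N_i$ by exactly one. Via the rotation procedure of Subsection~\ref{proc} and the bijection $\tau^l$, such an elementary operation corresponds to the removal of an $e$-ribbon from the single partition $\lambda:=(\tau^l)^{-1}(\ulambda^l,{\bf s}^l)$. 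Since every $e$-ribbon consists of $e$ cells with consecutive contents and therefore contains exactly one cell of each residue class, and since $\tau^l$ preserves residues of cells (a direct consequence of its definition in terms of $\beta$-numbers), each $N_i$ drops by one. Iterating gives the claimed identity.

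From this identity the main equivalence follows. If $\ulambda^l,\umu^l\in\Pi^l(n)$ share the same $e$-core, they share the same $(c_i)$ and, having the same size, the same weight $w$ (by Proposition~\ref{rank}), hence the same $(N_i)$, placing them in the same block by \cite{LM}. Conversely, if they lie in the same block, then the $(N_i)$ coincide and, since the map from $e$-core data (parametrised by the $e$-core multicharge in $\mathbb{Z}^e[m]$) to residue counts is injective modulo a common diagonal shift, the cores must coincide. The final assertion about weights is then immediate from the identity: $w = N_i - c_i$ depends only on the block.

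The main obstacle is establishing that each elementary operation on the $l$-abacus corresponds to the removal of an $e$-ribbon from $\lambda$. This amounts to tracing the rotation procedure of Subsection~\ref{proc} across a single bead move and verifying that, on the single-partition abacus, it enacts the move of a bead $e$ positions to the left --- which is the abacus realization of an $e$-ribbon removal. Once this geometric step is carried out, together with the injectivity of the core-to-residue-count map used above, the remainder is routine bookkeeping with residue counts.
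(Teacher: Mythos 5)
Your overall strategy---reduce to the Lyle--Mathas criterion that blocks are determined by the residue contents $N_i$, then relate the $N_i$ to the generalized $e$-core and the weight---is the natural one (the paper itself offers no proof, quoting the theorem from \cite{JL}). However, the combinatorial identity your argument hinges on, namely $N_i(\ulambda^l,{\bf s}^l)=c_i+w$ for every $i$, is false, and the step producing it does not survive scrutiny. An elementary operation on the $l$-abacus does correspond, via $\tau^l$, to decreasing one $\beta$-number of the single partition $\lambda=(\tau^l)^{-1}(\ulambda^l,{\bf s}^l)$ by $e$, i.e.\ to removing an $e$-ribbon from $\lambda$; that part of your geometric picture is correct. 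What fails is the assertion that ``$\tau^l$ preserves residues of cells'': there is no residue-preserving bijection between the cells of $\lambda$ and those of $\ulambda^l$---they do not even have the same number of cells (cf.\ the identity $|\lambda|-l|\lambda^{\circ}|=|\ulambda^l|+elN_0(\ulambda^l,l)$ invoked in the proof of Proposition~\ref{rank}). Concretely, an elementary operation moving a bead from runner $c$ (current charge $s_c$) to runner $c+1$ (current charge $s_{c+1}$) changes the content multiset of the $l$-partition by deleting exactly one cell of each content $s_c,s_c+1,\dots,s_{c+1}$, hence $s_{c+1}-s_c+1$ cells whose residues form a block of consecutive classes---not $e$ cells, one of each residue. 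On the paper's own running example ($\ulambda^l=((3,1),(2,1))$, ${\bf s}^l=(0,0)$, $e=3$, core $((1),(2))$ with multicharge $(-1,1)$) one computes $(N_0,N_1,N_2)=(2,2,3)$ for $\ulambda^l$ and $(c_0,c_1,c_2)=(0,1,2)$ for its core, so $N_i-c_i=(2,1,1)$ is not constant in $i$.

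This defect propagates to every subsequent step: the forward implication (same core and same size $\Rightarrow$ same $N_i$) no longer follows, the injectivity claim in the converse (``the core-to-residue-count map is injective modulo a common diagonal shift'') is asserted for the wrong map, and the weight statement $w=N_i-c_i$ is meaningless since the right-hand side depends on $i$. The theorem is true and the reduction to Lyle--Mathas is the standard first move, but the bridge between residue contents and the generalized core must be built differently---for instance by observing (as the computation above suggests) that the change of the residue content under an elementary operation depends only on the multicharges before and after the move, so that the total residue content of $\ulambda^l$ is a function of $n$ and of the $e$-core multicharge ${\bf s}_e$ alone, and then proving that this function determines ${\bf s}_e$. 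That is exactly the delicate part of the argument, and it is missing from your proposal.
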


This theorem shows that one can also parameterize a block with its core and its weight. A block thus may be denoted $B ({\bf s}_e, w)$  where ${\bf s}_e \in \mathbb{Z}^e [m]$  denotes the $e$-core multicharge. We write 
$$B ({\bf s}_e, w)=(\mathcal{B},\mathfrak{B}),$$
if $\mathcal{B}$ is the associated ordinary block and $\mathfrak{B}$ is the associated modular block.

\begin{exa}
 Assume that $l=2$, ${\bf s}=(-1,1)$  and $e=4$.  The associated Uglov $2$-partitions are given by:
$$\Phi^{(0,1)}(4)=\{(\emptyset,(4)),((1),(2,1)),((1,1),(1,1)),((1),(3)),((1,1),(2)),((2),(1,1)),$$
$$ ((2),(2)),((2,1),(1)),((2,1,1),\emptyset),((2,2),\emptyset),((3),(1)),((3,1),\emptyset),((4),\emptyset)\}.$$
Thanks to \cite{J}, one can compute the associated decomposition matrix. It is given as follows:
$$
\begin{array}{c}
     \textcolor{red}{ ((4),\emptyset) }\\
     \textcolor{blue}{ ( ((3),(1)) } \\
       \textcolor{red}{  (\emptyset,(4))} \\
      \textcolor{red}{((3,1),\emptyset) } \\
   \textcolor{blue}{ (   ((2),(2)) } \\
       \textcolor{green}{  ((2,2),\emptyset) } \\
     \textcolor{red}{  ((1),(3))}  \\
   ((2,1),(1))  \\
     \textcolor{red}{  ((2,1,1),\emptyset) } \\
    ((2),(1,1))  \\
     \textcolor{red}{ ((1,1),(2)) } \\
    ((1),(2,1))  \\
     \textcolor{green}{  ((1,1),(1,1)) } \\
     \textcolor{red}{ (\emptyset,(3,1))}  \\
   \textcolor{red}{  ((1,1,1),(1))  }\\
    \textcolor{blue}{   (\emptyset,(2,2))  } \\
    \textcolor{red}{ ((1,1,1,1),\emptyset) } \\
    \textcolor{red}{ (\emptyset,(2,1,1)) } \\
    \textcolor{green}{   ((1),(1,1,1)) } \\
  \textcolor{red}{ (\emptyset,(1,1,1,1))  }
\end{array}
\left(
\begin{array}{cccccccccccccc}
  &1&.&.&.&.&.&.&.&.&.&.&.&.\\
  &.&1&.&.&.&.&.&.&.&.&.&.&.\\
  &.&.&1&.&.&.&.&.&.&.&.&.&.\\
  &1&.&.&1&.&.&.&.&.&.&.&.&.\\
  &.&1&.&.&1&.&.&.&.&.&.&.&.\\
  &.&.&.&.&.&1&.&.&.&.&.&.&.\\
 &1&.&1&.&.&.&1&.&.&.&.&.&.\\
 &.&.&.&.&.&.&.&1&.&.&.&.&.\\
  &.&.&.&1&.&.&.&.&1&.&.&.&.\\
  &.&.&.&.&.&.&.&.&.&1&.&.&.\\
  &1&.&.&1&.&.&1&.&.&.&1&.&.\\
  &.&.&.&.&.&.&.&.&.&.&.&1&.\\
  &.&.&.&.&.&1&.&.&.&.&.&.&1\\
  &.&.&1&.&.&.&1&.&.&.&.&.&.\\
  &.&.&.&1&.&.&.&.&1&.&1&.&.\\
  &.&.&.&.&1&.&.&.&.&.&.&.&.\\
  &.&.&.&.&.&.&.&.&1&.&.&.&.\\
  &.&.&.&.&.&.&1&.&.&.&1&.&.\\
  &.&.&.&.&.&.&.&.&.&.&.&.&1\\
  &.&.&.&.&.&.&.&.&.&.&1&.&.\\
\end{array}\right)$$
We see that we have the following ordinary blocks:
$$\mathcal{B}_1=\{ ((4),\emptyset), (3,1),\emptyset),((1),(3)), ((1,1),(2)), ((2,1,1),\emptyset),((1,1,1),(1)),((1,1,1,1),\emptyset),(\emptyset,(2,1,1)), $$
$$,(\emptyset,(1,1,1,1), (\emptyset,(4))),(\emptyset,(3,1))  \},$$
$$\mathcal{B}_2=\{((3),(1)), ((2),(2)), (\emptyset, (2,2))\},$$
$$\mathcal{B}_3=\{((2,2),\emptyset), ((1,1),(1,1)), ((1), (1,1,1))\},$$
$$\mathcal{B}_4=\{((1),(2,1)))\},$$
and thus the following modular blocks :
$$\mathcal{B}_1=\{ ((4),\emptyset), ((3,1),\emptyset),((1),(3)), ((1,1),(2)), ((2,1,1),\emptyset) (\emptyset,(4))) \},$$
$$\mathcal{B}_2=\{((3),(1)), ((2),(2))\},$$
$$\mathcal{B}_3=\{((2,2),\emptyset), ((1,1),(1,1))\},$$
$$\mathcal{B}_4=\{((1),(2,1)))\},$$
One can check that:
$$B((1,0,0,0),2))=(\mathcal{B}_1,\mathfrak{B}_1),
B( (0,1,1,-1), 1)=(\mathcal{B}_2,\mathfrak{B}_2),$$
$$B((2,1,-1,-1),1) )=(\mathcal{B}_3,\mathfrak{B}_3),
B((2,0,1,-1),0)=(\mathcal{B}_4,\mathfrak{B}_4).$$

\end{exa}

\section{Action of the affine Weyl group on blocks }

We now see how the affine symmetric group acts on the set of blocks and develop this action thanks to our parametrization of blocks. 


%

\subsection{Action on the set of $e$-cores}   
Assume that ${\bf s}^l\in \mathbb{Z}^l [m]$.  There is an  action of $\widehat{\mathfrak{S}}_e$ on the sets of blocks 
 $$\{ B({\bf s}_e,w)\ |\ {\bf s}_e \in \mathbb{Z}^e [m],\ w \in \mathbb{N} \}.$$
Let ${\bf s}_e \in \mathbb{Z}^e[m]$, then we define:
$$\sigma. B({\bf s}_e,w)=B(\sigma.{\bf s}_e,w).$$
 In the two following section, 
  we see how one can give two actions on the set of partitions and on the set of $l$-partitions which are compatible with this action.

\subsection{Action on ordinary blocks}

Let $\ulambda^l \in \Pi^l (n)$. Let $i\in \{0,1\ldots,e-1\}$. Let 
$$\{\gamma_1,\ldots,\gamma_k\},$$
be the set of addable $j$-nodes of $\ulambda^l$. Let 
$$\{\eta_1,\ldots,\eta_s\},$$
be the set of removable  $j$-nodes of $\ulambda^l$.  Then we define 
$$\sigma _i. \ulambda^l:=\umu^l,$$
where $$[\umu^l]=[\ulambda^l] \cup  \left\{\gamma_1,\ldots,\gamma_k\} \right\}\setminus\{\eta_1,\ldots,\eta_s\}.$$
One can easily perform the computation using the abacus configuration. 
 Let us consider the $e$-symbol ${\bf X}^{{\bf s}_e} (\ulambda_e)=(X^0,\ldots,X^{e-1})$. 
\begin{enumerate}
\item if $1\leq i\leq e-1$. Then we have  
$${\bf X}^{\sigma_i. {\bf s}_e} (\umu_e )=(X^0,\ldots,X^{i},X^{i-1},\ldots,X^{e-1})$$
\item if $i=0$ then we have:
$${\bf X}^{\sigma_i. {\bf s}_e} (\umu_e )=(X^{e-1}[l],\ldots,X^{1},\ldots, X^{e-2},\ldots,X^{0}[-l])$$
\end{enumerate}

\subsection{Action on modular blocks} 
 We now give an action of the set of Uglov $l$-partitions $\Phi^{{\bf s}^l}$. This  action is in fact the restriction of an action on the set of multipartitions. So one can take.   Let $\ulambda^l \in \Pi^l$. We give the definition of  $\sigma_i \star  \ulambda^l$. We set :
 $$\varepsilon_i (\ulambda^l)= \operatorname{max} (i\geq 0,\ \widetilde{f}_i^k.\ulambda^l \neq 0),$$
  $$\varphi_i (\ulambda^l)= \operatorname{max} (i\geq 0,\ \widetilde{e}_i^k.\ulambda^l \neq 0).$$
Then we set 
$$\sigma_i  \star  \ulambda^l=\left\{ \begin{array}{cc} \widetilde{e}_i^{\varphi (\ulambda^l)-\varepsilon (\ulambda^l)} \ulambda^l & \text{ if } \varphi (\ulambda^l)\geq \varepsilon (\ulambda^l), \\
 \widetilde{f}_i^{\varepsilon (\ulambda^l)-\varphi (\ulambda^l)} \ulambda^l & \text{ if } \varphi (\ulambda^l)\leq \varepsilon (\ulambda^l).\end{array}  \right.
$$
Contrary to the above case, it is not easy to see that the action is well defined  and this comes  in fact from a general result proved by Kashiwara \cite{Kashi}.

Consider the reduced $i$-signature of $\lambda^l$.  
$$\underbrace{A\ldots A}_{\varepsilon_i  (\ulambda^l)} \underbrace{R\ldots R}_{\varphi_i  (\ulambda^l)}$$
There exists $m\in \mathbb{N}$ such that   the number of black beads in runner $i-1$ is equal to $\varepsilon_i  (\ulambda^l)+m$
 and the number of black beads in runner $i$ is equal to $\varphi_i  (\ulambda^l)+m$ (the number $m$ corresponds to the number of occurrences ``RA'' we need to remove to reach the reduced signature from the signature). 
  If we look at the $i$-signature of $\sigma_i \star \ulambda^l$, we now have: 
$$\underbrace{A\ldots A}_{\varphi_i  (\ulambda^l)} \underbrace{R\ldots R}_{\varepsilon_i  (\ulambda^l)}$$
and the number of black beads in runner $i-1$ is  $\varphi_i  (\ulambda^l)+m$ where as the number of black beads in runner 
 $i$ is $\varepsilon_i  (\ulambda^l)+m$. We conclude that 
$$\sigma_i  (\mathcal{B},\mathcal{B})=(\sigma_i . \mathcal{B},\sigma_i \star\mathfrak{B}).$$
If $\ulambda^l \in \Phi^{{\bf s}^l}$ then $\sigma_i \star \ulambda^l \in \Phi^{{\bf s}^l}$.

It is important to note that the above action is not the same as in the last section, even if  we restrict the action to the set of  Uglov $l$-partitions, this is not even true for the set of $e$-regular partitions ! 
 We see in the action on the partition that an $e$-regular partition is not necessarily sent to an $e$-regular partition (for example 
  $\sigma_2 (3,2,1,1)=(2,2,2,1,1)$ for $p=3$).
 However, if $\ulambda^l$ is an Uglov $l$-partition then $\sigma_i  \star  \ulambda^l$ and $\sigma_i (\ulambda^l)$ have the same size. Thus, the weight of the two partitions are the same and this implies that the action on Uglov $l$-partitions also preserves the weight.

\subsection{Chuang-Rouquier equivalences}
 The above action has an interpretation in terms of the determination of blocks.  In \cite{CR},  Chuang and Rouquier have shown that 
  the action  of the affine symmetric group on the set of ordinary blocks induces an equivalence of derived categories between these blocks.  This equivalence is even a Morita equivalence in some cases. 
  Assume that we have 
 $$B ({\bf s}_e, w)=(\mathcal{B},\mathfrak{B})$$
 then we say that $B ({\bf s}_e, w)$ is an $i$-Scopes block if for all $\umu^l \in \mathcal{B}$, $\umu^l$ admits no addable  $i$-nodes. 
  If $B ({\bf s}_e, w)$  is an $i$-Scopes block then $B ({\bf s}_e, w)$ and $\sigma_i B ({\bf s}_e, w)$ are Morita-equivalent. 
  The following result is an easy extension of what happens in the case $l=1$. Such a result has been already studied by  Dell'Arciprete \cite{AD}, Lyle \cite{Ly}, Webster \cite{We} and recently by Li and Tan \cite{LiT} but without our notion of generalized core. 
  \begin{Prop}\label{scopes}  We fix a multicharge ${\bf s}^l \in \mathbb{Z}^l [m]$ and the associated Ariki-Koike algebra $\mathcal{H}_n^{{\bf s}^l}(\eta)$ .   Let ${\bf s}_e =(s_0,\ldots,s_{e-1})\in \mathbb{Z}^e [m]$ be such that $B ({\bf s}_e, w)=(\mathcal{B},\mathfrak{B})$  is a block of our Ariki-Koike algebra. Let $0\leq i\leq e-1$.
Assume that 
$$(*) \qquad s_i-s_{i-1} \geq \left\{ \begin{array}{rc} 
 w & \text{ if }i=1,\ldots,e-1 \\
 w+e & \text{ if }i=0 
 \end{array} \right.$$
then  for all $\umu^l \in \mathcal{B}$, $\umu^l$ admits no addable  $i$-nodes. 

\end{Prop}
\begin{proof}
 Assume that ${\bf s}_e =(s_0,\ldots,s_{e-1})$ satisfies the conditions in $(2)$.  Assume $i\neq 0$ and thus  that 
 $s_i-s_{i-1}\geq w$. If $\umu^l$ is in  $\mathcal{B}$, then the $l$-abacus of $\umu^l$ is obtained from the $e$-core $l$-partition associated to ${\bf s}_e$  by doing $w$ elementary moves. Because of the above condition, this implies that in the $l$-abacus of $\ulambda$, if we have a  bead in runner $i-1$ then we must have one in runner $i$. As a consequence, we do not have any addable $i$-node in $\umu^l$.
  If $s_0-s_{e-1}\geq w+e$, this is the same proof.

\end{proof}
\begin{Prop}\label{scopes2}
 Let ${\bf s}_e =(s_0,\ldots,s_{e-1})\in \mathbb{Z}^e [m]$  and let $0\leq i\leq e-1$. 
   Assume that for $w\in \mathbb{N}$,  condition $(*)$ is not satisfied  then there exists ${\bf s}^l\in  \mathbb{Z}^l [m]$,
    an  associated block  $B ({\bf s}_e, w)=(\mathcal{B},\mathfrak{B})$  for  $\mathcal{H}_n^{{\bf s}^l}(\eta)$ and $\umu^l\in \mathcal{B}$ such that $\umu^l$ has at least one addable $i$-node. 

\end{Prop}
\begin{proof}
Assume that $i\neq 0$. If  $s_i-s_{i-1}< w$ then  take the associated $e$-core $l$-partition. Take the associated $e$-abacus. Then in runner $i-1$, one can move the rightmost bead $w$ times at the right.  Taking the associated $l$-abacus, we see that this bead has no bead at its right, it is thus associated 
 with an addable $i$-node. 
\end{proof}

\begin{exa}
Take $l=2$ and $e=3$ with ${\bf s}_e=(7,0,2)$.  Write the abacus of the associated $e$-core $2$-partition:
 
 \begin{center}
\begin{tikzpicture}[scale=0.5, bb/.style={draw,circle,fill,minimum size=2.5mm,inner sep=0pt,outer sep=0pt}, rb/.style={draw,circle,fill=red,minimum size=2.5mm,inner sep=0pt,outer sep=0pt}, wb/.style={draw,circle,fill=white,minimum size=2.5mm,inner sep=0pt,outer sep=0pt}]

	\node [] at (11,-1) {10};
	\node [] at (10,-1) {9};
	\node [] at (9,-1) {8};
	\node [] at (8,-1) {7};
	\node [] at (7,-1) {6};
	\node [] at (6,-1) {5};
	\node [] at (5,-1) {4};
	\node [] at (4,-1) {3};
	\node [] at (3,-1) {2};
	\node [] at (2,-1) {1};
	\node [] at (1,-1) {0};
	\node [] at (0,-1) {-1};
	\node [] at (-1,-1) {-2};
	\node [] at (-2,-1) {-3};
	\node [] at (-3,-1) {-4};
	\node [] at (-4,-1) {-5};
	\node [] at (-5,-1) {-6};
	\node [] at (-6,-1) {-7};
	\node [] at (-7,-1) {-8};
	\node [] at (-8,-1) {-9};
	\node [] at (-9,-1) {-10};
			\draw (-10,-1) node[]{$\ldots$};

	\node [wb] at (11,0) {};
	\node [wb] at (10,0) {};
	\node [wb] at (9,0) {};
	\node [wb] at (8,0) {};
	\node [bb] at (7,0) {};
	\node [wb] at (6,0) {};
	\node [wb] at (5,0) {};
	\node [bb] at (4,0) {};
	\node [bb] at (3,0) {};
	\node [wb] at (2,0) {};
	\node [bb] at (1,0) {};
	\node [bb] at (0,0) {};
	\node [rb] at (-1,0) {};
	\node [bb] at (-2,0) {};
	\node [bb] at (-3,0) {};
	\node [bb] at (-4,0) {};
	\node [bb] at (-5,0) {};
	\node [bb] at (-6,0) {};
	\node [bb] at (-7,0) {};
	\node [bb] at (-8,0) {};
	\node [bb] at (-9,0) {};
				\draw (-10,0) node[]{$\ldots$};

	\node [wb] at (11,1) {};
	\node [bb] at (10,1) {};
	\node [wb] at (9,1) {};
	\node [wb] at (8,1) {};
	\node [bb] at (7,1) {};
	\node [wb] at (6,1) {};
	\node [wb] at (5,1) {};
	\node [bb] at (4,1) {};
	\node [bb] at (3,1) {};
	\node [wb] at (2,1) {};
	\node [bb] at (1,1) {};
	\node [bb] at (0,1) {};
	\node [bb] at (-1,1) {};
	\node [bb] at (-2,1) {};
	\node [bb] at (-3,1) {};
	\node [bb] at (-4,1) {};
	\node [bb] at (-5,1) {};
	\node [bb] at (-6,1) {};
	\node [bb] at (-7,1) {};
	\node [bb] at (-8,1) {};
	\node [bb] at (-9,1) {};
				\draw (-10,1) node[]{$\ldots$};
							\draw (-10,-1) node[]{$\ldots$};
						\draw[dashed](0.5,-0.5)--node[]{}(0.5,1.5);
			\draw (-10,1) node[]{$\ldots$};
					\draw[dashed](3.5,-0.5)--node[]{}(3.5,1.5);		
						\draw (-10,1) node[]{$\ldots$};
					\draw[dashed](-2.5,-0.5)--node[]{}(-2.5,1.5);	
						\draw[dashed](-5.5,-0.5)--node[]{}(-5.5,1.5);	
							\draw[dashed](6.5,-0.5)--node[]{}(6.5,1.5);
								\draw[dashed](9.5,-0.5)--node[]{}(9.5,1.5);
									\draw[dashed](-8.5,-0.5)--node[]{}(-8.5,1.5);
	\end{tikzpicture}
\end{center}
This is the $2$-partition $((3,1),(5,3,1))$ with multicharge ${\bf s}_l=(4,5)$. Take $i=2$ then $s_2-s_1=2$. If $w=3$ one can do $3$ elementary moves with the red bead to obtain the abacus : 
 \begin{center}
\begin{tikzpicture}[scale=0.5, bb/.style={draw,circle,fill,minimum size=2.5mm,inner sep=0pt,outer sep=0pt}, rb/.style={draw,circle,fill=red,minimum size=2.5mm,inner sep=0pt,outer sep=0pt}, wb/.style={draw,circle,fill=white,minimum size=2.5mm,inner sep=0pt,outer sep=0pt}]

	\node [] at (11,-1) {10};
	\node [] at (10,-1) {9};
	\node [] at (9,-1) {8};
	\node [] at (8,-1) {7};
	\node [] at (7,-1) {6};
	\node [] at (6,-1) {5};
	\node [] at (5,-1) {4};
	\node [] at (4,-1) {3};
	\node [] at (3,-1) {2};
	\node [] at (2,-1) {1};
	\node [] at (1,-1) {0};
	\node [] at (0,-1) {-1};
	\node [] at (-1,-1) {-2};
	\node [] at (-2,-1) {-3};
	\node [] at (-3,-1) {-4};
	\node [] at (-4,-1) {-5};
	\node [] at (-5,-1) {-6};
	\node [] at (-6,-1) {-7};
	\node [] at (-7,-1) {-8};
	\node [] at (-8,-1) {-9};
	\node [] at (-9,-1) {-10};
			\draw (-10,-1) node[]{$\ldots$};

	\node [wb] at (11,0) {};
	\node [wb] at (10,0) {};
	\node [wb] at (9,0) {};
	\node [wb] at (8,0) {};
	\node [bb] at (7,0) {};
	\node [wb] at (6,0) {};
	\node [wb] at (5,0) {};
	\node [bb] at (4,0) {};
	\node [bb] at (3,0) {};
	\node [wb] at (2,0) {};
	\node [bb] at (1,0) {};
	\node [bb] at (0,0) {};
	\node [wb] at (-1,0) {};
	\node [bb] at (-2,0) {};
	\node [bb] at (-3,0) {};
	\node [bb] at (-4,0) {};
	\node [bb] at (-5,0) {};
	\node [bb] at (-6,0) {};
	\node [bb] at (-7,0) {};
	\node [bb] at (-8,0) {};
	\node [bb] at (-9,0) {};
				\draw (-10,0) node[]{$\ldots$};

	\node [wb] at (11,1) {};
	\node [bb] at (10,1) {};
	\node [wb] at (9,1) {};
	\node [wb] at (8,1) {};
	\node [bb] at (7,1) {};
	\node [wb] at (6,1) {};
	\node [rb] at (5,1) {};
	\node [bb] at (4,1) {};
	\node [bb] at (3,1) {};
	\node [wb] at (2,1) {};
	\node [bb] at (1,1) {};
	\node [bb] at (0,1) {};
	\node [bb] at (-1,1) {};
	\node [bb] at (-2,1) {};
	\node [bb] at (-3,1) {};
	\node [bb] at (-4,1) {};
	\node [bb] at (-5,1) {};
	\node [bb] at (-6,1) {};
	\node [bb] at (-7,1) {};
	\node [bb] at (-8,1) {};
	\node [bb] at (-9,1) {};
				\draw (-10,1) node[]{$\ldots$};
							\draw (-10,-1) node[]{$\ldots$};
						\draw[dashed](0.5,-0.5)--node[]{}(0.5,1.5);
			\draw (-10,1) node[]{$\ldots$};
					\draw[dashed](3.5,-0.5)--node[]{}(3.5,1.5);		
						\draw (-10,1) node[]{$\ldots$};
					\draw[dashed](-2.5,-0.5)--node[]{}(-2.5,1.5);	
						\draw[dashed](-5.5,-0.5)--node[]{}(-5.5,1.5);	
							\draw[dashed](6.5,-0.5)--node[]{}(6.5,1.5);
								\draw[dashed](9.5,-0.5)--node[]{}(9.5,1.5);
									\draw[dashed](-8.5,-0.5)--node[]{}(-8.5,1.5);
	\end{tikzpicture}
\end{center}

The associated bipartition is $((4,2,2,1,1),(4,2,1,1,1)$ with multicharge $(3,6)$ and this partition has indeed one addable $2$-node. 
Take $i=0$. 
 \begin{center}
\begin{tikzpicture}[scale=0.5, bb/.style={draw,circle,fill,minimum size=2.5mm,inner sep=0pt,outer sep=0pt}, rb/.style={draw,circle,fill=red,minimum size=2.5mm,inner sep=0pt,outer sep=0pt}, wb/.style={draw,circle,fill=white,minimum size=2.5mm,inner sep=0pt,outer sep=0pt}]

	\node [] at (11,-1) {10};
	\node [] at (10,-1) {9};
	\node [] at (9,-1) {8};
	\node [] at (8,-1) {7};
	\node [] at (7,-1) {6};
	\node [] at (6,-1) {5};
	\node [] at (5,-1) {4};
	\node [] at (4,-1) {3};
	\node [] at (3,-1) {2};
	\node [] at (2,-1) {1};
	\node [] at (1,-1) {0};
	\node [] at (0,-1) {-1};
	\node [] at (-1,-1) {-2};
	\node [] at (-2,-1) {-3};
	\node [] at (-3,-1) {-4};
	\node [] at (-4,-1) {-5};
	\node [] at (-5,-1) {-6};
	\node [] at (-6,-1) {-7};
	\node [] at (-7,-1) {-8};
	\node [] at (-8,-1) {-9};
	\node [] at (-9,-1) {-10};
			\draw (-10,-1) node[]{$\ldots$};

	\node [wb] at (11,0) {};
	\node [wb] at (10,0) {};
	\node [wb] at (9,0) {};
	\node [wb] at (8,0) {};
	\node [bb] at (7,0) {};
	\node [wb] at (6,0) {};
	\node [wb] at (5,0) {};
	\node [bb] at (4,0) {};
	\node [rb] at (3,0) {};
	\node [wb] at (2,0) {};
	\node [bb] at (1,0) {};
	\node [bb] at (0,0) {};
	\node [bb] at (-1,0) {};
	\node [bb] at (-2,0) {};
	\node [bb] at (-3,0) {};
	\node [bb] at (-4,0) {};
	\node [bb] at (-5,0) {};
	\node [bb] at (-6,0) {};
	\node [bb] at (-7,0) {};
	\node [bb] at (-8,0) {};
	\node [bb] at (-9,0) {};
				\draw (-10,0) node[]{$\ldots$};

	\node [wb] at (11,1) {};
	\node [bb] at (10,1) {};
	\node [wb] at (9,1) {};
	\node [wb] at (8,1) {};
	\node [bb] at (7,1) {};
	\node [wb] at (6,1) {};
	\node [wb] at (5,1) {};
	\node [bb] at (4,1) {};
	\node [bb] at (3,1) {};
	\node [wb] at (2,1) {};
	\node [bb] at (1,1) {};
	\node [bb] at (0,1) {};
	\node [bb] at (-1,1) {};
	\node [bb] at (-2,1) {};
	\node [bb] at (-3,1) {};
	\node [bb] at (-4,1) {};
	\node [bb] at (-5,1) {};
	\node [bb] at (-6,1) {};
	\node [bb] at (-7,1) {};
	\node [bb] at (-8,1) {};
	\node [bb] at (-9,1) {};
				\draw (-10,1) node[]{$\ldots$};
							\draw (-10,-1) node[]{$\ldots$};
						\draw[dashed](0.5,-0.5)--node[]{}(0.5,1.5);
			\draw (-10,1) node[]{$\ldots$};
					\draw[dashed](3.5,-0.5)--node[]{}(3.5,1.5);		
						\draw (-10,1) node[]{$\ldots$};
					\draw[dashed](-2.5,-0.5)--node[]{}(-2.5,1.5);	
						\draw[dashed](-5.5,-0.5)--node[]{}(-5.5,1.5);	
							\draw[dashed](6.5,-0.5)--node[]{}(6.5,1.5);
								\draw[dashed](9.5,-0.5)--node[]{}(9.5,1.5);
									\draw[dashed](-8.5,-0.5)--node[]{}(-8.5,1.5);
	\end{tikzpicture}
\end{center}
If we perform $4$ elementary moves with the read bead we obtain the following abacus:
 \begin{center}
\begin{tikzpicture}[scale=0.5, bb/.style={draw,circle,fill,minimum size=2.5mm,inner sep=0pt,outer sep=0pt}, rb/.style={draw,circle,fill=red,minimum size=2.5mm,inner sep=0pt,outer sep=0pt}, wb/.style={draw,circle,fill=white,minimum size=2.5mm,inner sep=0pt,outer sep=0pt}]

	\node [] at (11,-1) {10};
	\node [] at (10,-1) {9};
	\node [] at (9,-1) {8};
	\node [] at (8,-1) {7};
	\node [] at (7,-1) {6};
	\node [] at (6,-1) {5};
	\node [] at (5,-1) {4};
	\node [] at (4,-1) {3};
	\node [] at (3,-1) {2};
	\node [] at (2,-1) {1};
	\node [] at (1,-1) {0};
	\node [] at (0,-1) {-1};
	\node [] at (-1,-1) {-2};
	\node [] at (-2,-1) {-3};
	\node [] at (-3,-1) {-4};
	\node [] at (-4,-1) {-5};
	\node [] at (-5,-1) {-6};
	\node [] at (-6,-1) {-7};
	\node [] at (-7,-1) {-8};
	\node [] at (-8,-1) {-9};
	\node [] at (-9,-1) {-10};
			\draw (-10,-1) node[]{$\ldots$};

	\node [wb] at (11,0) {};
	\node [wb] at (10,0) {};
	\node [rb] at (9,0) {};
	\node [wb] at (8,0) {};
	\node [bb] at (7,0) {};
	\node [wb] at (6,0) {};
	\node [wb] at (5,0) {};
	\node [bb] at (4,0) {};
	\node [wb] at (3,0) {};
	\node [wb] at (2,0) {};
	\node [bb] at (1,0) {};
	\node [bb] at (0,0) {};
	\node [bb] at (-1,0) {};
	\node [bb] at (-2,0) {};
	\node [bb] at (-3,0) {};
	\node [bb] at (-4,0) {};
	\node [bb] at (-5,0) {};
	\node [bb] at (-6,0) {};
	\node [bb] at (-7,0) {};
	\node [bb] at (-8,0) {};
	\node [bb] at (-9,0) {};
				\draw (-10,0) node[]{$\ldots$};

	\node [wb] at (11,1) {};
	\node [bb] at (10,1) {};
	\node [wb] at (9,1) {};
	\node [wb] at (8,1) {};
	\node [bb] at (7,1) {};
	\node [wb] at (6,1) {};
	\node [wb] at (5,1) {};
	\node [bb] at (4,1) {};
	\node [bb] at (3,1) {};
	\node [wb] at (2,1) {};
	\node [bb] at (1,1) {};
	\node [bb] at (0,1) {};
	\node [bb] at (-1,1) {};
	\node [bb] at (-2,1) {};
	\node [bb] at (-3,1) {};
	\node [bb] at (-4,1) {};
	\node [bb] at (-5,1) {};
	\node [bb] at (-6,1) {};
	\node [bb] at (-7,1) {};
	\node [bb] at (-8,1) {};
	\node [bb] at (-9,1) {};
				\draw (-10,1) node[]{$\ldots$};
							\draw (-10,-1) node[]{$\ldots$};
						\draw[dashed](0.5,-0.5)--node[]{}(0.5,1.5);
			\draw (-10,1) node[]{$\ldots$};
					\draw[dashed](3.5,-0.5)--node[]{}(3.5,1.5);		
						\draw (-10,1) node[]{$\ldots$};
					\draw[dashed](-2.5,-0.5)--node[]{}(-2.5,1.5);	
						\draw[dashed](-5.5,-0.5)--node[]{}(-5.5,1.5);	
							\draw[dashed](6.5,-0.5)--node[]{}(6.5,1.5);
								\draw[dashed](9.5,-0.5)--node[]{}(9.5,1.5);
									\draw[dashed](-8.5,-0.5)--node[]{}(-8.5,1.5);
	\end{tikzpicture}
\end{center}
This is the $2$-abacus of the bipartition $((5,4,2),(5,3,1,1))$  with multicharge $(4,5)$ and we indeed have one addable $0$-node.

\end{exa}

\begin{Rem}
Assume that $\mathcal{B}$  satisfies one of the above property $(*)$. Let 
$\mathfrak{B}$ be the associated modular block. Let $\ulambda^l \in\mathfrak{B}$ then   $\ulambda^l$ has no removable $i$-nodes.  So we have $\sigma_i \star \ulambda=\sigma_i \ulambda$.

\end{Rem}
\begin{Rem}
Proposition \ref{scopes2} is not the converse of Proposition \ref{scopes}. It can happen that there exists one  multicharge   and $i$-Scopes block of the associated Ariki-Koike algebra   which does not satisfied $(*)$.  

\end{Rem}

\subsection{Explicit description}
Now we want to give  an interpretation of the action on the modular blocks via the level-rank duality. In fact, such an interpretation can already be derived from  \cite[th. 2.19]{LG} in the case $l>1$ and $i\neq 0$  using the crystal basis theory. Here, we give a purely combinatorial and self-contained proof which is also available for $l=1$.  
\begin{Th}\label{duality}
Assume that $0\leq i\leq e-1$.  Let $\lambda \in \Pi$ and let 
$$\tau^l (\lambda)=(\ulambda^l,{\bf s}^l), {\tau}_e (\lambda)=({\ulambda}_e,{\bf s}_e).$$
Assume that $\ulambda^l\in \Pi^l$.  For $i=1,\ldots,e-1$, we have:
 $$(\tau^l)^{-1}(\sigma_i \star  \ulambda^l, {\bf s}^l) =
{\tau}_e^{-1} ( \Psi^{ {{\bf s}}_e \to  {\bf s}_e.\sigma_{i}} ({\ulambda}_e),  \sigma_{i}.{\bf s}_e ).$$

\end{Th}

\begin{proof}
Let
$${\bf X}^l:=(X^0,\ldots,X^{l-1})$$
be the $l$-symbol associated to $(\ulambda^l,{\bf s}^l)$, and consider the $e$-symbol associated to 
$({\ulambda}_e,{\bf s}_e)$:
$${\bf Y}=(Y^0,\ldots,Y^{e-1}).$$
Recall that one can go from one abacus to the other thanks to the procedure described in \S \ref{proc}.

Assume that $0\leq i\leq e-1$. 
We want to compute $\sigma_i \star  \ulambda^l$. So we need to look at the set of removable and addable $i$-nodes of $\ulambda^l$.  
 In ${\bf X}^l$,  the removable $i$-nodes correspond to elements $\alpha \in X^k$ (for all $0\leq k\leq l-1$) such that $\alpha \equiv i+e\mathbb{Z}$ and $\alpha-1\notin X^k$. The addable $i$-nodes 
  corresponds to elements $\alpha \in X^k$ (for all $0\leq k\leq e-1$) such that $\alpha+1 \equiv i+e\mathbb{Z}$ and $\alpha+1\notin X^k$.
  
Thus, one can also determine the addable and removable $i$-nodes of $\ulambda$ in $Y$. 
\begin{itemize}
\item If $i\neq 0$, the removable $i$-nodes correspond to elements in $\alpha \in X^i$ such that $\alpha \notin X^{i-1}$, 
 the addable $i$-nodes correspond to the elements $\alpha \in X^{i-1}$ such that $\alpha \notin X{i}$.

\item If $i=0$, the removable $i$-nodes correspond to elements in $\alpha \in X^0$ such that $\alpha-e \notin X^{e-1}$, 
 the addable $i$-nodes correspond to the elements $\alpha \in X^{e-1}$ such that $\alpha+e \notin X^{0}$. 
 \end{itemize}

Set ${\bf s}_e:=(t_1,\ldots,t_{e-1})$. Assume that $i\neq 0$.  Because both the action of $\sigma_i$ and 
$\Psi^{ {{\bf s}}_e \to {\bf s}_e.\sigma_i}$ are involutive, one can assume that $t_{i-1}\leq t_i$.  This means that we have more removable $i$-nodes than addable $i$-nodes. We perform the algorithm for the computation of 
 $\Psi^{ {{\bf s}}_e \to {\bf s}_e.\sigma_i} ({\ulambda}_e)$.  
 In the associated $e$-symbol, we consider the couple $(X^{i-1},X^i)$.   
  Take the $i$-signature  $w_{i}(\ulambda^l,{\bf s}^l)$ and read it from left to right.  Then one may easily 
   obtain the $i$-signature of 
  $(\tau^l)^{-1} \circ \tau_e (\Psi^{ {{\bf s}}_e \to  {\bf s}_e.\sigma_{i}} ({\ulambda}_e))$ 
   from   $w_{i}(\ulambda^l,{\bf s}^l)$ by applying our algorithm recursively, starting  with the leftmost $A$ in the signature:
   \begin{itemize}
   \item If we have a letter $R$ at the left of $A$ then, the letter $A$ is in pair with the rightmost $R$ which is at the left of $A$ and which is not already in a pair. 
   \item Otherwise, $A$ is in pair with the rightmost $R$ which is not in a pair
   \end{itemize}
    In this way, one can define pairs between some of the letters $R$ and $A$ exactly as in the algorithm.  All of the letters $A$ are in pairs with a letter $R$ whereas some of the letters $R$ are not. 
    We will say that a letter $R$ or $A$   is a marked   letter if it is in  pair with another letter.    By our algorithm in \S \ref{algoiso}, all the non  marked letters are associated 
     to elements of the symbol which will move from $X_i$ to $X_{i-1}$, that is the associated letter $R$ will change to a letter $A$. 
    
    So all our letter $A$ are marked.   Let us first assume that there are now occurence of type $RA$ in this word so that the $i$-signature is equal to the reduced 
    $i$-signature:
$$\underbrace{A\ldots A}_{\varepsilon_i  (\ulambda^l)} \underbrace{R\ldots R}_{\varphi_i  (\ulambda^l)}$$
We here have ${\varphi_i  (\ulambda^l)}\geq \varepsilon_i  (\ulambda^l)$. So by our algorithm for computing the bijections, 
the nodes associated to the letters $A$ are in pairs with the nodes associated to the $\varepsilon_i  (\ulambda^l)$ rightmost
   $R$. Then, if we want to compute $\Psi^{ {{\bf s}}_e \to  {\bf s}_e.\sigma_{i}} ({\ulambda}_e)$, we need to 
     change the non marked nodes  from removable to addable nodes (recall than we have assumed that we have more removable than addable nodes). The resulting signature is:
     $$\underbrace{A\ldots A}_{\varphi_i  (\ulambda^l)} \underbrace{R\ldots R}_{\varepsilon_i  (\ulambda^l)}$$
  which is what we wanted.  
  Now, assume that we have  an occurence of type $RA$ in the $i$-signature that we denote by $w$.
   If we remove this occurence (that is the two associated beads in the $e$-abacus) then the associated abacus and its associated word $w'$ satisfies our result, by induction.  Now if we add the two beads, two cases can occur:
   \begin{itemize}
\item The two letters associated to the beads are in   pairs in which case the result is trivial,
     \item Otherwise, the letter $R$ is in pair with another letter $A$. This letter $A$ was in pair with another letter $R$ in $w'$ which should thus be at the left of the one we have added. But then this letter must also be marked because we have a new letter $A$ at its right. 
  \end{itemize}
  As a conclusion,  We see that marked letters in $w$ are exactly the marked letters of $w'$ together with the two that we have added. 
     The result follows. 

\end{proof}

\begin{Rem}
For $i=0$,  we denote, for an $e$-partition $\ulambda^e$ and ${\bf s}_e \in \mathbb{Z}^e$, 
$(\ulambda^e)^{\ast}:=(\lambda^{e-1},\lambda^0,\ldots,\lambda^{e-2})$ , 
$(\ulambda^e)^{\triangle}:=(\lambda^{1},\ldots,\lambda^{e-1},\lambda^{0})$ , 
and ${\bf s}_e^{\ast}:=(s_{e-1}+l,s_0, \ldots,s_{e-2})$. It  follows directly using the case $i\neq 1$  together with the construction of the $e$-abacus associate to an $l$-partition that:
 $$ (\tau^l)^{-1}(\sigma_0 \star  \ulambda^l,{\bf s}^l ) =
{\tau}_e^{-1} ( (\Psi^{  {{\bf s}}_e^{\ast} \to ({\bf s}_e)^{\ast}.\sigma_1} ({\ulambda}_e^{\ast}))^\triangle , \sigma_0.  {\bf s}_e    ).$$
\end{Rem}

\begin{exa}
Assume that $e=l=3$ and let $\ulambda^l=((3,2),(3,2),(1))$ with ${\bf s}^l=(0,0,1)$. Then $\ulambda^l$ is an Uglov $l$-partition.  The Young diagram is as follows: 
$$(\ \ytableausetup{centertableaux}
\begin{ytableau}
0 & 1 & 2   \\ 
2& 0
\end{ytableau}, 
\begin{ytableau}
0& 1 & 2  \\ 
2& 0 
\end{ytableau}, \begin{ytableau}
1
\end{ytableau}\ )$$
The associated $l$-abacus is 
\begin{center}
\begin{tikzpicture}[scale=0.5, bb/.style={draw,circle,fill,minimum size=2.5mm,inner sep=0pt,outer sep=0pt}, wb/.style={draw,circle,fill=white,minimum size=2.5mm,inner sep=0pt,outer sep=0pt}]
	
	\node [] at (11,-1) {10};
	\node [] at (10,-1) {9};
	\node [] at (9,-1) {8};
	\node [] at (8,-1) {7};
	\node [] at (7,-1) {6};
	\node [] at (6,-1) {5};
	\node [] at (5,-1) {4};
	\node [] at (4,-1) {3};
	\node [] at (3,-1) {2};
	\node [] at (2,-1) {1};
	\node [] at (1,-1) {0};
	\node [] at (0,-1) {-1};
	\node [] at (-1,-1) {-2};
	\node [] at (-2,-1) {-3};
	\node [] at (-3,-1) {-4};
	\node [] at (-4,-1) {-5};
	\node [] at (-5,-1) {-6};
	\node [] at (-6,-1) {-7};
	\node [] at (-7,-1) {-8};
	\node [] at (-8,-1) {-9};
	\node [] at (-9,-1) {-10};
			\draw (-10,-1) node[]{$\ldots$};

	\node [wb] at (11,0) {};
	\node [wb] at (10,0) {};
	\node [wb] at (9,0) {};
	\node [wb] at (8,0) {};
	\node [wb] at (7,0) {};
	\node [wb] at (6,0) {};
	\node [wb] at (5,0) {};
	\node [wb] at (4,0) {};
	\node [bb] at (3,0) {};
	\node [wb] at (2,0) {};
	\node [bb] at (1,0) {};
	\node [wb] at (0,0) {};
	\node [wb] at (-1,0) {};
	\node [bb] at (-2,0) {};
	\node [bb] at (-3,0) {};
	\node [bb] at (-4,0) {};
	\node [bb] at (-5,0) {};
	\node [bb] at (-6,0) {};
	\node [bb] at (-7,0) {};
	\node [bb] at (-8,0) {};
	\node [bb] at (-9,0) {};
				\draw (-10,0) node[]{$\ldots$};

	\node [wb] at (11,1) {};
	\node [wb] at (10,1) {};
	\node [wb] at (9,1) {};
	\node [wb] at (8,1) {};
	\node [wb] at (7,1) {};
	\node [wb] at (6,1) {};
	\node [wb] at (5,1) {};
	\node [wb] at (4,1) {};
	\node [bb] at (3,1) {};
	\node [wb] at (2,1) {};
	\node [bb] at (1,1) {};
	\node [wb] at (0,1) {};
	\node [wb] at (-1,1) {};
	\node [bb] at (-2,1) {};
	\node [bb] at (-3,1) {};
	\node [bb] at (-4,1) {};
	\node [bb] at (-5,1) {};
	\node [bb] at (-6,1) {};
	\node [bb] at (-7,1) {};
	\node [bb] at (-8,1) {};
	\node [bb] at (-9,1) {};
				\draw (-10,1) node[]{$\ldots$};
	
	\node [wb] at (11,2) {};
	\node [wb] at (10,2) {};
	\node [wb] at (9,2) {};
	\node [wb] at (8,2) {};
	\node [wb] at (7,2) {};
	\node [wb] at (6,2) {};
	\node [wb] at (5,2) {};
	\node [wb] at (4,2) {};
	\node [wb] at (3,2) {};
	\node [bb] at (2,2) {};
	\node [wb] at (1,2) {};
	\node [bb] at (0,2) {};
	\node [bb] at (-1,2) {};
	\node [bb] at (-2,2) {};
	\node [bb] at (-3,2) {};
	\node [bb] at (-4,2) {};
	\node [bb] at (-5,2) {};
	\node [bb] at (-6,2) {};
	\node [bb] at (-7,2) {};
	\node [bb] at (-8,2) {};
	\node [bb] at (-9,2) {};
				\draw (-10,2) node[]{$\ldots$};
				\draw[dashed](-8.5,-0.5)--node[]{}(-8.5,2.5);\draw (-10,2) node[]{$\ldots$};
				\draw[dashed](-5.5,-0.5)--node[]{}(-5.5,2.5);\draw (-10,2) node[]{$\ldots$};
				\draw[dashed](-2.5,-0.5)--node[]{}(-2.5,2.5);\draw (-10,2) node[]{$\ldots$};
				\draw[dashed](0.5,-0.5)--node[]{}(0.5,2.5);\draw (-10,2) node[]{$\ldots$};
				\draw[dashed](3.5,-0.5)--node[]{}(3.5,2.5);\draw (-10,2) node[]{$\ldots$};
				\draw[dashed](6.5,-0.5)--node[]{}(6.5,2.5);\draw (-10,2) node[]{$\ldots$};
				\draw[dashed](9.5,-0.5)--node[]{}(9.5,2.5);\draw (-10,2) node[]{$\ldots$};				
	\end{tikzpicture}
\end{center}
The associated $e$-abacus is thus:
\begin{center}
\begin{tikzpicture}[scale=0.5, bb/.style={draw,circle,fill,minimum size=2.5mm,inner sep=0pt,outer sep=0pt}, wb/.style={draw,circle,fill=white,minimum size=2.5mm,inner sep=0pt,outer sep=0pt}]
	
	\node [] at (11,-1) {10};
	\node [] at (10,-1) {9};
	\node [] at (9,-1) {8};
	\node [] at (8,-1) {7};
	\node [] at (7,-1) {6};
	\node [] at (6,-1) {5};
	\node [] at (5,-1) {4};
	\node [] at (4,-1) {3};
	\node [] at (3,-1) {2};
	\node [] at (2,-1) {1};
	\node [] at (1,-1) {0};
	\node [] at (0,-1) {-1};
	\node [] at (-1,-1) {-2};
	\node [] at (-2,-1) {-3};
	\node [] at (-3,-1) {-4};
	\node [] at (-4,-1) {-5};
	\node [] at (-5,-1) {-6};
	\node [] at (-6,-1) {-7};
	\node [] at (-7,-1) {-8};
	\node [] at (-8,-1) {-9};
	\node [] at (-9,-1) {-10};
			\draw (-10,-1) node[]{$\ldots$};

	\node [wb] at (11,0) {};
	\node [wb] at (10,0) {};
	\node [wb] at (9,0) {};
	\node [wb] at (8,0) {};
	\node [wb] at (7,0) {};
	\node [wb] at (6,0) {};
	\node [wb] at (5,0) {};
	\node [wb] at (4,0) {};
	\node [bb] at (3,0) {};
	\node [bb] at (2,0) {};
	\node [wb] at (1,0) {};
	\node [bb] at (0,0) {};
	\node [bb] at (-1,0) {};
	\node [bb] at (-2,0) {};
	\node [bb] at (-3,0) {};
	\node [bb] at (-4,0) {};
	\node [bb] at (-5,0) {};
	\node [bb] at (-6,0) {};
	\node [bb] at (-7,0) {};
	\node [bb] at (-8,0) {};
	\node [bb] at (-9,0) {};
				\draw (-10,0) node[]{$\ldots$};

	\node [wb] at (11,1) {};
	\node [wb] at (10,1) {};
	\node [wb] at (9,1) {};
	\node [wb] at (8,1) {};
	\node [wb] at (7,1) {};
	\node [wb] at (6,1) {};
	\node [wb] at (5,1) {};
	\node [wb] at (4,1) {};
	\node [wb] at (3,1) {};
	\node [wb] at (2,1) {};
	\node [bb] at (1,1) {};
	\node [wb] at (0,1) {};
	\node [wb] at (-1,1) {};
	\node [bb] at (-2,1) {};
	\node [bb] at (-3,1) {};
	\node [bb] at (-4,1) {};
	\node [bb] at (-5,1) {};
	\node [bb] at (-6,1) {};
	\node [bb] at (-7,1) {};
	\node [bb] at (-8,1) {};
	\node [bb] at (-9,1) {};
				\draw (-10,1) node[]{$\ldots$};
	
	\node [wb] at (11,2) {};
	\node [wb] at (10,2) {};
	\node [wb] at (9,2) {};
	\node [wb] at (8,2) {};
	\node [wb] at (7,2) {};
	\node [wb] at (6,2) {};
	\node [wb] at (5,2) {};
	\node [wb] at (4,2) {};
	\node [bb] at (3,2) {};
	\node [bb] at (2,2) {};
	\node [wb] at (1,2) {};
	\node [wb] at (0,2) {};
	\node [wb] at (-1,2) {};
	\node [bb] at (-2,2) {};
	\node [bb] at (-3,2) {};
	\node [bb] at (-4,2) {};
	\node [bb] at (-5,2) {};
	\node [bb] at (-6,2) {};
	\node [bb] at (-7,2) {};
	\node [bb] at (-8,2) {};
	\node [bb] at (-9,2) {};
				\draw (-10,2) node[]{$\ldots$};
				\draw[dashed](-8.5,-0.5)--node[]{}(-8.5,2.5);\draw (-10,2) node[]{$\ldots$};
				\draw[dashed](-5.5,-0.5)--node[]{}(-5.5,2.5);\draw (-10,2) node[]{$\ldots$};
				\draw[dashed](-2.5,-0.5)--node[]{}(-2.5,2.5);\draw (-10,2) node[]{$\ldots$};
				\draw[dashed](0.5,-0.5)--node[]{}(0.5,2.5);\draw (-10,2) node[]{$\ldots$};
				\draw[dashed](3.5,-0.5)--node[]{}(3.5,2.5);\draw (-10,2) node[]{$\ldots$};
				\draw[dashed](6.5,-0.5)--node[]{}(6.5,2.5);\draw (-10,2) node[]{$\ldots$};
				\draw[dashed](9.5,-0.5)--node[]{}(9.5,2.5);\draw (-10,2) node[]{$\ldots$};				
	\end{tikzpicture}
\end{center}
We thus have $\ulambda_e=((1,1),(2),(3,3))$ and ${\bf s}_e=(2,-1,0)$. 
 We compute $\sigma_1 \star \ulambda^l$. We have:
$$w_{i}(\ulambda^l,{\bf s}^l) =AARAA$$
and thus 
$$w_{i}(\sigma \star \ulambda^l,{\bf s}^l)=RRRAR$$
We thus have $\sigma_1 \star \ulambda^l=((3,3,1),(3,2,1),(1))$. On the other hand, we have that the $e$-abacus of $\Psi^{ {{\bf s}}_e \to  {\bf s}_e.\sigma_{1}} ({\ulambda}_e)$ is 
\begin{center}
\begin{tikzpicture}[scale=0.5, bb/.style={draw,circle,fill,minimum size=2.5mm,inner sep=0pt,outer sep=0pt}, wb/.style={draw,circle,fill=white,minimum size=2.5mm,inner sep=0pt,outer sep=0pt}]
	
	\node [] at (11,-1) {10};
	\node [] at (10,-1) {9};
	\node [] at (9,-1) {8};
	\node [] at (8,-1) {7};
	\node [] at (7,-1) {6};
	\node [] at (6,-1) {5};
	\node [] at (5,-1) {4};
	\node [] at (4,-1) {3};
	\node [] at (3,-1) {2};
	\node [] at (2,-1) {1};
	\node [] at (1,-1) {0};
	\node [] at (0,-1) {-1};
	\node [] at (-1,-1) {-2};
	\node [] at (-2,-1) {-3};
	\node [] at (-3,-1) {-4};
	\node [] at (-4,-1) {-5};
	\node [] at (-5,-1) {-6};
	\node [] at (-6,-1) {-7};
	\node [] at (-7,-1) {-8};
	\node [] at (-8,-1) {-9};
	\node [] at (-9,-1) {-10};
			\draw (-10,-1) node[]{$\ldots$};

	\node [wb] at (11,0) {};
	\node [wb] at (10,0) {};
	\node [wb] at (9,0) {};
	\node [wb] at (8,0) {};
	\node [wb] at (7,0) {};
	\node [wb] at (6,0) {};
	\node [wb] at (5,0) {};
	\node [wb] at (4,0) {};
	\node [wb] at (3,0) {};
	\node [bb] at (2,0) {};
	\node [wb] at (1,0) {};
	\node [wb] at (0,0) {};
	\node [wb] at (-1,0) {};
	\node [bb] at (-2,0) {};
	\node [bb] at (-3,0) {};
	\node [bb] at (-4,0) {};
	\node [bb] at (-5,0) {};
	\node [bb] at (-6,0) {};
	\node [bb] at (-7,0) {};
	\node [bb] at (-8,0) {};
	\node [bb] at (-9,0) {};
				\draw (-10,0) node[]{$\ldots$};

	\node [wb] at (11,1) {};
	\node [wb] at (10,1) {};
	\node [wb] at (9,1) {};
	\node [wb] at (8,1) {};
	\node [wb] at (7,1) {};
	\node [wb] at (6,1) {};
	\node [wb] at (5,1) {};
	\node [wb] at (4,1) {};
	\node [bb] at (3,1) {};
	\node [wb] at (2,1) {};
	\node [bb] at (1,1) {};
	\node [bb] at (0,1) {};
	\node [bb] at (-1,1) {};
	\node [bb] at (-2,1) {};
	\node [bb] at (-3,1) {};
	\node [bb] at (-4,1) {};
	\node [bb] at (-5,1) {};
	\node [bb] at (-6,1) {};
	\node [bb] at (-7,1) {};
	\node [bb] at (-8,1) {};
	\node [bb] at (-9,1) {};
				\draw (-10,1) node[]{$\ldots$};
	
	\node [wb] at (11,2) {};
	\node [wb] at (10,2) {};
	\node [wb] at (9,2) {};
	\node [wb] at (8,2) {};
	\node [wb] at (7,2) {};
	\node [wb] at (6,2) {};
	\node [wb] at (5,2) {};
	\node [wb] at (4,2) {};
	\node [bb] at (3,2) {};
	\node [bb] at (2,2) {};
	\node [wb] at (1,2) {};
	\node [wb] at (0,2) {};
	\node [wb] at (-1,2) {};
	\node [bb] at (-2,2) {};
	\node [bb] at (-3,2) {};
	\node [bb] at (-4,2) {};
	\node [bb] at (-5,2) {};
	\node [bb] at (-6,2) {};
	\node [bb] at (-7,2) {};
	\node [bb] at (-8,2) {};
	\node [bb] at (-9,2) {};
				\draw (-10,2) node[]{$\ldots$};
				\draw[dashed](-8.5,-0.5)--node[]{}(-8.5,2.5);\draw (-10,2) node[]{$\ldots$};
				\draw[dashed](-5.5,-0.5)--node[]{}(-5.5,2.5);\draw (-10,2) node[]{$\ldots$};
				\draw[dashed](-2.5,-0.5)--node[]{}(-2.5,2.5);\draw (-10,2) node[]{$\ldots$};
				\draw[dashed](0.5,-0.5)--node[]{}(0.5,2.5);\draw (-10,2) node[]{$\ldots$};
				\draw[dashed](3.5,-0.5)--node[]{}(3.5,2.5);\draw (-10,2) node[]{$\ldots$};
				\draw[dashed](6.5,-0.5)--node[]{}(6.5,2.5);\draw (-10,2) node[]{$\ldots$};
				\draw[dashed](9.5,-0.5)--node[]{}(9.5,2.5);\draw (-10,2) node[]{$\ldots$};				
	\end{tikzpicture}
\end{center}
and the associated $l$-abacus is:
\begin{center}
\begin{tikzpicture}[scale=0.5, bb/.style={draw,circle,fill,minimum size=2.5mm,inner sep=0pt,outer sep=0pt}, wb/.style={draw,circle,fill=white,minimum size=2.5mm,inner sep=0pt,outer sep=0pt}]
	
	\node [] at (11,-1) {10};
	\node [] at (10,-1) {9};
	\node [] at (9,-1) {8};
	\node [] at (8,-1) {7};
	\node [] at (7,-1) {6};
	\node [] at (6,-1) {5};
	\node [] at (5,-1) {4};
	\node [] at (4,-1) {3};
	\node [] at (3,-1) {2};
	\node [] at (2,-1) {1};
	\node [] at (1,-1) {0};
	\node [] at (0,-1) {-1};
	\node [] at (-1,-1) {-2};
	\node [] at (-2,-1) {-3};
	\node [] at (-3,-1) {-4};
	\node [] at (-4,-1) {-5};
	\node [] at (-5,-1) {-6};
	\node [] at (-6,-1) {-7};
	\node [] at (-7,-1) {-8};
	\node [] at (-8,-1) {-9};
	\node [] at (-9,-1) {-10};
			\draw (-10,-1) node[]{$\ldots$};

	\node [wb] at (11,0) {};
	\node [wb] at (10,0) {};
	\node [wb] at (9,0) {};
	\node [wb] at (8,0) {};
	\node [wb] at (7,0) {};
	\node [wb] at (6,0) {};
	\node [wb] at (5,0) {};
	\node [wb] at (4,0) {};
	\node [bb] at (3,0) {};
	\node [bb] at (2,0) {};
	\node [wb] at (1,0) {};
	\node [wb] at (0,0) {};
	\node [bb] at (-1,0) {};
	\node [wb] at (-2,0) {};
	\node [bb] at (-3,0) {};
	\node [bb] at (-4,0) {};
	\node [bb] at (-5,0) {};
	\node [bb] at (-6,0) {};
	\node [bb] at (-7,0) {};
	\node [bb] at (-8,0) {};
	\node [bb] at (-9,0) {};
				\draw (-10,0) node[]{$\ldots$};

	\node [wb] at (11,1) {};
	\node [wb] at (10,1) {};
	\node [wb] at (9,1) {};
	\node [wb] at (8,1) {};
	\node [wb] at (7,1) {};
	\node [wb] at (6,1) {};
	\node [wb] at (5,1) {};
	\node [wb] at (4,1) {};
	\node [bb] at (3,1) {};
	\node [wb] at (2,1) {};
	\node [bb] at (1,1) {};
	\node [wb] at (0,1) {};
	\node [bb] at (-1,1) {};
	\node [wb] at (-2,1) {};
	\node [bb] at (-3,1) {};
	\node [bb] at (-4,1) {};
	\node [bb] at (-5,1) {};
	\node [bb] at (-6,1) {};
	\node [bb] at (-7,1) {};
	\node [bb] at (-8,1) {};
	\node [bb] at (-9,1) {};
				\draw (-10,1) node[]{$\ldots$};
	
	\node [wb] at (11,2) {};
	\node [wb] at (10,2) {};
	\node [wb] at (9,2) {};
	\node [wb] at (8,2) {};
	\node [wb] at (7,2) {};
	\node [wb] at (6,2) {};
	\node [wb] at (5,2) {};
	\node [wb] at (4,2) {};
	\node [wb] at (3,2) {};
	\node [bb] at (2,2) {};
	\node [wb] at (1,2) {};
	\node [bb] at (0,2) {};
	\node [bb] at (-1,2) {};
	\node [bb] at (-2,2) {};
	\node [bb] at (-3,2) {};
	\node [bb] at (-4,2) {};
	\node [bb] at (-5,2) {};
	\node [bb] at (-6,2) {};
	\node [bb] at (-7,2) {};
	\node [bb] at (-8,2) {};
	\node [bb] at (-9,2) {};
				\draw (-10,2) node[]{$\ldots$};
				\draw[dashed](-8.5,-0.5)--node[]{}(-8.5,2.5);\draw (-10,2) node[]{$\ldots$};
				\draw[dashed](-5.5,-0.5)--node[]{}(-5.5,2.5);\draw (-10,2) node[]{$\ldots$};
				\draw[dashed](-2.5,-0.5)--node[]{}(-2.5,2.5);\draw (-10,2) node[]{$\ldots$};
				\draw[dashed](0.5,-0.5)--node[]{}(0.5,2.5);\draw (-10,2) node[]{$\ldots$};
				\draw[dashed](3.5,-0.5)--node[]{}(3.5,2.5);\draw (-10,2) node[]{$\ldots$};
				\draw[dashed](6.5,-0.5)--node[]{}(6.5,2.5);\draw (-10,2) node[]{$\ldots$};
				\draw[dashed](9.5,-0.5)--node[]{}(9.5,2.5);\draw (-10,2) node[]{$\ldots$};				
	\end{tikzpicture}
\end{center}
which is the abacus that we wanted.
\end{exa}
Let us give a consequence of the above Theorem.  The above result shows that one can interpret the Chuang-Rouquier equivalence between two blocks of the same weight as a crystal isomorphism after the level-rank duality. 
 More precisely, after this duality,  the equivalence between two blocks labelled by the $e$-core multicharge ${\bf s}_e$  
  and $\sigma. {\bf s}_e$ can be interpreted via the crystal isomorphism between Fock spaces associated to the two multicharges ${\bf s}_e$     and $\sigma. {\bf s}_e$.  Quite remarkably, this result is also available in the case of the  Hecke algebra of type $A$ (and the symmetric group). This phenomenon can also be noted in the context of cyclotomic Cherednik algebras. In this case, 
   the crystal isomorphism may themselves be interpreted as wall crossing maps for Cherednik algebras \cite{L,JL3} and the action on blocks of the Cherednik algebras may be seen as  perverse equivalences between the two category $\mathcal{O}$ (see \cite{L} again) defined via level-rank duality. 
  
On the other hand, one can interpret the crystal isomorphism between the two Fock spaces associated with two multicharges ${\bf s}_e$     and $\sigma_i . {\bf s}_e$ 
 ($i=1,\ldots,l-1$) as the action on the associated $l$-partitions after an inverse level-rank duality. Note that this in particular show that this isomorphism do not depend on $e$ (which was already remarked in \cite{JL}).

\section{Orbits of blocks}

At the moment, all the main results for blocks of the symmetric groups seem to have a natural generalization  in the case of Ariki-Koike algebras. However, this is not really the case.  When $l=1$, the action is transitive.   When $l>1$, this is no more the case.
This explain the result found in \cite{LQ} that two blocks of the same weight are non necessarily derived equivalent.

\subsection{Orbits of blocks}  The first problem  is to understand when two blocks $B({\bf s}_e,w)$ and $B({\bf s}_e ',w')$ are in the same orbit.
 We first need to have $w=w'$ because the action preserves the weight. Denote by 
 $(\ulambda^l,{\bf s}^l)$ and $({\ulambda'}^l,{{\bf s}'}^l)$  the  $e$-cores associated to ${\bf s}_e$ and ${\bf s}_e '$, because the action of the affine symmetric group  $\widetilde{\mathfrak{S}}_e$ preserves the $l$-multicharge,  
we must have 
   ${\bf s}^l={{\bf s}'}^l$ . 
   
   Reciprocally,  assume that $(\ulambda^l,{\bf s}^l)$ and $({\ulambda'}^l,{{\bf s}}^l)$  are two $e$-cores.
    Denote by ${\bf s}_e$ and ${\bf s}_e ' $  the associated $e$-core multicharges. There exists $\sigma \in \widehat{\mathfrak{S}}_e$ such that 
      $\sigma. {\bf s}_e =(s_0,\ldots,s_{e-1})$ satisfies $l>s_0 \geq s_1 \geq \ldots \geq s_{e-1} \geq 0$.  This implies that 
       the $e$-core associated with the  $e$-core multicharge $\sigma. {\bf s}_e$  is $(\uemptyset,{\bf s}^l)$ by Remark \ref{mvide}.
      We can do the same for $({\ulambda'}^l,{\bf s}^l)$, 
     Thus both $(\ulambda^l,{\bf s}^l)$ and $({\ulambda'}^l,{{\bf s}}^l)$ have  $(\uemptyset,{\bf s}_l)$ in their orbits.    We thus proved:

 \begin{Prop}
 $B({\bf s}_e,w)$ and  $B({\bf s}_e',w)$  are in the same orbit if and only if the associated $l$-multicharges 
  of  ${\bf s}_e$ and ${\bf s}_e'$ are the same.
 
 \end{Prop}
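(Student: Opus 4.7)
My plan is to handle the two directions separately, using the level-rank duality dictionary of \S\ref{proc}.

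\emph{Forward direction.} By definition $\sigma.B({\bf s}_e,w) = B(\sigma.{\bf s}_e,w)$, so the weight $w$ is already an orbit invariant of $\widehat{\mathfrak{S}}_e$ on the set of blocks. It remains to show the associated $l$-multicharge is also invariant under $\widehat{\mathfrak{S}}_e$. It suffices to check on each Coxeter generator $\sigma_i$ (and on $\tau$ in the extended group): the formulas of the ``Action on ordinary blocks'' subsection translate each such generator into a simple operation on the $e$-abacus (swap of adjacent runners for $\sigma_i$ with $i\neq 0$, twisted swap for $\sigma_0$, cyclic shift for $\tau$). Applying the rectangle-rotation dictionary to these operations, one sees that the resulting rearrangement on the $l$-abacus is internal to each $l\times e$ rectangle and preserves the total bead count on each $l$-runner. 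Since the $l$-multicharge is precisely that tuple of bead counts, it is $\widehat{\mathfrak{S}}_e$-invariant.

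\emph{Backward direction.} Suppose the associated $l$-multicharges of ${\bf s}_e$ and ${\bf s}'_e$ both equal ${\bf s}^l$. Write ${\bf s}_l \in \mathbb{Z}^e[m]$ for the unique $e$-multicharge whose level-rank dual is $(\uemptyset,{\bf s}^l)\in\Pi^l\times\mathbb{Z}^l[m]$. The claim is that ${\bf s}_e$ and ${\bf s}'_e$ both lie in the $\widehat{\mathfrak{S}}_e$-orbit of ${\bf s}_l$, which will then give the conclusion.

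For ${\bf s}_e$, let $(\ulambda,{\bf s}^l)$ denote its level-rank dual. If $\ulambda = \uemptyset$ then ${\bf s}_e = {\bf s}_l$ and there is nothing to show. Otherwise the $l$-abacus of $(\ulambda,{\bf s}^l)$ differs from that of $(\uemptyset,{\bf s}^l)$, and one may slide a black bead one step leftwards within its $l$-runner into a white neighbour; this elementary operation strictly decreases $|\ulambda|$. Tracing the move back through the rectangle-rotation dictionary, it corresponds on the $e$-abacus to exactly one generator of $\widehat{\mathfrak{S}}_e$ acting on ${\bf s}_e$: a swap $\sigma_i$ of adjacent $e$-runners when the moved bead stays within one $l\times e$ rectangle, and the twisted $\sigma_0$ respectively a $\tau$-shift when it crosses a rectangle boundary. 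Iterating, we reach ${\bf s}_l$ after finitely many generators, so ${\bf s}_e$ lies in the orbit of ${\bf s}_l$. The same argument applied to ${\bf s}'_e$ completes the proof.

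\emph{Main obstacle.} The delicate point is the bookkeeping in the backward step: one must verify carefully that every elementary leftward slide on the $l$-abacus corresponds to a single generator of $\widehat{\mathfrak{S}}_e$ acting on the $e$-side multicharge ${\bf s}_e$, and that iterating these slides does reach the canonical ${\bf s}_l$. Once the dictionary is pinned down, the induction on $|\ulambda|$ is routine, and the matching of weights is automatic since $w$ is $\widehat{\mathfrak{S}}_e$-invariant by construction.
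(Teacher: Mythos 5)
Your forward direction is essentially the paper's argument and is fine: under the rectangle-rotation dictionary each generator of $\widehat{\mathfrak{S}}_e$ moves beads only horizontally within their own $l$-runners, so the $l$-multicharge is an orbit invariant. (One small correction: for $\sigma_0$ and $\tau$ a bead does cross a rectangle boundary, so the rearrangement is not ``internal to each $l\times e$ rectangle''; what is true, and what you actually need, is that the bead never changes $l$-runner.)

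The backward direction has a genuine gap. Your key claim --- that a single leftward bead-slide on the $l$-abacus corresponds to a single generator of $\widehat{\mathfrak{S}}_e$ acting on ${\bf s}_e$ --- is false. Such a slide moves exactly one bead of the $e$-abacus from one runner to an adjacent one, hence changes the $e$-core multicharge by $(\ldots,+1,-1,\ldots)$ in two consecutive entries; a generator $\sigma_i$ instead swaps the \emph{entire} runners $i-1$ and $i$ of the core, i.e.\ toggles all addable and removable $i$-nodes simultaneously, moving $|s_i-s_{i-1}|$ beads at once. Since the $\widetilde{\mathfrak{S}}_e$-orbit of ${\bf s}_e\in\mathbb{Z}^e[m]$ is determined by the multiset of residues of its entries modulo $l$, a $(\ldots,+1,-1,\ldots)$ change generically leaves the orbit. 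Concretely, take $l=e=2$ and ${\bf s}_e=(2,-2)$: the associated $e$-core partition is $(3,2,1)$, with level-rank dual the $2$-partition $((1),(1))$ of $2$-multicharge $(0,0)$. Sliding the black bead at position $0$ of a runner one step left removes the box of that component and sends the underlying partition to $(3)$, whose $e$-core multicharge is $(1,-1)$; but $(1,-1)$ is not in the $\widehat{\mathfrak{S}}_2$-orbit of $(2,-2)$ (residues mod $2$ are $\{1,1\}$ versus $\{0,0\}$), and neither $\sigma_0.(2,-2)=(0,0)$ nor $\tau.(2,-2)=(0,2)$ equals $(1,-1)$. So your induction exits the orbit at the very first step. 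The repair is to reduce with the generators themselves, each performing all $i$-node toggles at once (this is the block-level action described in the paper), which brings the $e$-multicharge to the canonical representative dual to $(\uemptyset,{\bf s}_l)$; equivalently, one verifies directly that two elements of $\mathbb{Z}^e[m]$ are $\widetilde{\mathfrak{S}}_e$-conjugate exactly when their residue multisets modulo $l$ agree, and that this multiset is equivalent data to the $l$-multicharge ${\bf s}^l$.
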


So now, it is a natural question to ask which types of $l$-multicharges appear in the above proposition. 

\subsection{Classification of Orbits}
 We want to understand  which types of blocks appear for the Ariki-Koike algebra  $\mathcal{H}_n^{{\bf s}^l}(\eta)$ for all $n\geq 0$.  
  Thus by our previous discussion, 
Wwe want  to find all the multicharge  ${{\bf s} '}^l$ such that $(\ulambda^l,{{\bf s} '}^l)$ appears as a $e$-core  
   for the Ariki-Koike algebra $\mathcal{H}_n^{{\bf s}^l}(\eta)$, for all $n\geq 0$.  
  We claim that this is exactly $\overline{\mathcal{A}}^l_e[m]$.  First,  note that we already know that we indeed have ${{\bf s}'}^l\in \overline{\mathcal{A}}_e^l[m]$.
 Now  assume that we get ${\bf s}^l=(s_0,\ldots,s_{l-1})$ in  $\overline{\mathcal{A}}_e^l[m]$. 
  
We start with the empty $l$-partition together with the multicharge 
  ${\bf s}^l$. We need to show that there exists $\ulambda^l$ such that the $l$-multicharge associated to  the $e$-core multicharge 
   of $(\ulambda^l,{\bf s}^l)$ is precisely ${{\bf s}'}^l$. To do this,  we argue algorithmically. First, we start  with the $l$-abacus of $(\uemptyset,{\bf s}^l)$ and we move black beads in this abacus to obtain the desired abacus using the following algorithm. We start to study the runner $l-1$:  If $s_{l-1}=s_{l-1}'$, then we  consider runner $l-2$ and so on. If $s_{l-1}<s_{l-1}'$,
  Let  $j$ be maximal  such that $s_j>s_j '$ and $s_{j-1}\neq s_j$ (recall that $\sum_{0\leq i\leq l-1} s_i=\sum_{0\leq i\leq l-1} s_i'=m$). 
   We move the rightmost  black bead in runner $j$ to a position $s_{l-1}+1.$  Then, 
    we continue this process changing ${\bf s}^l$ with the $(s_0, \ldots,s_j-1,\ldots,s_{l-1}+1)$.
          
      Assume that $s_{l-1}>s_{l-1}'$ then there exists $k$ such that $s_{l-1}=s_{l-2}=\ldots=s_k>s_{k-1}$.  Note that $s_k>s_k '$. Moreover,  there exists $j$  such that $s_j<s_j'$. We choose $s_j$ maximal for this property.  Then we move the rightmost black bead of runner $k$ to the position $s_{j}+1+e$.  We then continue this process by considering the $l$-partition $(s_0,\ldots,s_j+1,\ldots,s_k-1,\ldots,s_{l-1})$. 
 At the end, by construction, the $l$-partition has the desired property. 
      
      Here is an example: let $l=5$ and $e=7$, we take ${\bf s}^l=(-3,-1,0,0,4)$ and ${{\bf s}^l }'=(-2,-2,0,1,3)$. We start with the $l$-abacus of $(\emptyset,{\bf s}^l)$:
       \begin{center}
\begin{tikzpicture}[scale=0.5, bb/.style={draw,circle,fill,minimum size=2.5mm,inner sep=0pt,outer sep=0pt}, wb/.style={draw,circle,fill=white,minimum size=2.5mm,inner sep=0pt,outer sep=0pt}]
	
	\node [] at (11,-1) {10};
	\node [] at (10,-1) {9};
	\node [] at (9,-1) {8};
	\node [] at (8,-1) {7};
	\node [] at (7,-1) {6};
	\node [] at (6,-1) {5};
	\node [] at (5,-1) {4};
	\node [] at (4,-1) {3};
	\node [] at (3,-1) {2};
	\node [] at (2,-1) {1};
	\node [] at (1,-1) {0};
	\node [] at (0,-1) {-1};
	\node [] at (-1,-1) {-2};
	\node [] at (-2,-1) {-3};
	\node [] at (-3,-1) {-4};
	\node [] at (-4,-1) {-5};
	\node [] at (-5,-1) {-6};
	\node [] at (-6,-1) {-7};
	\node [] at (-7,-1) {-8};
	\node [] at (-8,-1) {-9};
	\node [] at (-9,-1) {-10};
			\draw (-10,-1) node[]{$\ldots$};

	\node [wb] at (11,0) {};
	\node [wb] at (10,0) {};
	\node [wb] at (9,0) {};
	\node [wb] at (8,0) {};
	\node [wb] at (7,0) {};
	\node [wb] at (6,0) {};
	\node [wb] at (5,0) {};
	\node [wb] at (4,0) {};
	\node [wb] at (3,0) {};
	\node [wb] at (2,0) {};
	\node [wb] at (1,0) {};
	\node [wb] at (0,0) {};
	\node [wb] at (-1,0) {};
	\node [wb] at (-2,0) {};
	\node [bb] at (-3,0) {};
	\node [bb] at (-4,0) {};
	\node [bb] at (-5,0) {};
	\node [bb] at (-6,0) {};
	\node [bb] at (-7,0) {};
	\node [bb] at (-8,0) {};
	\node [bb] at (-9,0) {};
				\draw (-10,0) node[]{$\ldots$};

	\node [wb] at (11,1) {};
	\node [wb] at (10,1) {};
	\node [wb] at (9,1) {};
	\node [wb] at (8,1) {};
	\node [wb] at (7,1) {};
	\node [wb] at (6,1) {};
	\node [wb] at (5,1) {};
	\node [wb] at (4,1) {};
	\node [wb] at (3,1) {};
	\node [wb] at (2,1) {};
	\node [wb] at (1,1) {};
	\node [wb] at (0,1) {};
	\node [bb] at (-1,1) {};
	\node [bb] at (-2,1) {};
	\node [bb] at (-3,1) {};
	\node [bb] at (-4,1) {};
	\node [bb] at (-5,1) {};
	\node [bb] at (-6,1) {};
	\node [bb] at (-7,1) {};
	\node [bb] at (-8,1) {};
	\node [bb] at (-9,1) {};
				\draw (-10,1) node[]{$\ldots$};
	
	\node [wb] at (11,2) {};
	\node [wb] at (10,2) {};
	\node [wb] at (9,2) {};
	\node [wb] at (8,2) {};
	\node [wb] at (7,2) {};
	\node [wb] at (6,2) {};
	\node [wb] at (5,2) {};
	\node [wb] at (4,2) {};
	\node [wb] at (3,2) {};
	\node [wb] at (2,2) {};
	\node [wb] at (1,2) {};
	\node [bb] at (0,2) {};
	\node [bb] at (-1,2) {};
	\node [bb] at (-2,2) {};
	\node [bb] at (-3,2) {};
	\node [bb] at (-4,2) {};
	\node [bb] at (-5,2) {};
	\node [bb] at (-6,2) {};
	\node [bb] at (-7,2) {};
	\node [bb] at (-8,2) {};
	\node [bb] at (-9,2) {};
				\draw (-10,2) node[]{$\ldots$};
				
					\node [wb] at (11,3) {};
	\node [wb] at (10,3) {};
	\node [wb] at (9,3) {};
	\node [wb] at (8,3) {};
	\node [wb] at (7,3) {};
	\node [wb] at (6,3) {};
	\node [wb] at (5,3) {};
	\node [wb] at (4,3) {};
	\node [wb] at (3,3) {};
	\node [wb] at (2,3) {};
	\node [wb] at (1,3) {};
	\node [bb] at (0,3) {};
	\node [bb] at (-1,3) {};
	\node [bb] at (-2,3) {};
	\node [bb] at (-3,3) {};
	\node [bb] at (-4,3) {};
	\node [bb] at (-5,3) {};
	\node [bb] at (-6,3) {};
	\node [bb] at (-7,3) {};
	\node [bb] at (-8,3) {};
	\node [bb] at (-9,3) {};
				\draw (-10,3) node[]{$\ldots$};
				
					\node [wb] at (11,4) {};
	\node [wb] at (10,4) {};
	\node [wb] at (9,4) {};
	\node [wb] at (8,4) {};
	\node [wb] at (7,4) {};
	\node [wb] at (6,4) {};
	\node [wb] at (5,4) {};
	\node [bb] at (4,4) {};
	\node [bb] at (3,4) {};
	\node [bb] at (2,4) {};
	\node [bb] at (1,4) {};
	\node [bb] at (0,4) {};
	\node [bb] at (-1,4) {};
	\node [bb] at (-2,4) {};
	\node [bb] at (-3,4) {};
	\node [bb] at (-4,4) {};
	\node [bb] at (-5,4) {};
	\node [bb] at (-6,4) {};
	\node [bb] at (-7,4) {};
	\node [bb] at (-8,4) {};
	\node [bb] at (-9,4) {};
				\draw (-10,4) node[]{$\ldots$};
				
				\draw[dashed](-6.5,-0.5)--node[]{}(-6.5,4.5);\draw (-10,2) node[]{$\ldots$};
				\draw[dashed](0.5,-0.5)--node[]{}(0.5,4.5);\draw (-10,2) node[]{$\ldots$};
				\draw[dashed](7.5,-0.5)--node[]{}(7.5,4.5);\draw (-10,2) node[]{$\ldots$};				
	\end{tikzpicture}
\end{center}
The algorithm  then gives the following abacus:

            \begin{center}
\begin{tikzpicture}[scale=0.5, bb/.style={draw,circle,fill,minimum size=2.5mm,inner sep=0pt,outer sep=0pt}, rb/.style={draw,circle,fill=red,minimum size=2.5mm,inner sep=0pt,outer sep=0pt}, wb/.style={draw,circle,fill=white,minimum size=2.5mm,inner sep=0pt,outer sep=0pt}]
	
	\node [] at (11,-1) {10};
	\node [] at (10,-1) {9};
	\node [] at (9,-1) {8};
	\node [] at (8,-1) {7};
	\node [] at (7,-1) {6};
	\node [] at (6,-1) {5};
	\node [] at (5,-1) {4};
	\node [] at (4,-1) {3};
	\node [] at (3,-1) {2};
	\node [] at (2,-1) {1};
	\node [] at (1,-1) {0};
	\node [] at (0,-1) {-1};
	\node [] at (-1,-1) {-2};
	\node [] at (-2,-1) {-3};
	\node [] at (-3,-1) {-4};
	\node [] at (-4,-1) {-5};
	\node [] at (-5,-1) {-6};
	\node [] at (-6,-1) {-7};
	\node [] at (-7,-1) {-8};
	\node [] at (-8,-1) {-9};
	\node [] at (-9,-1) {-10};
			\draw (-10,-1) node[]{$\ldots$};

	\node [wb] at (11,0) {};
	\node [wb] at (10,0) {};
	\node [wb] at (9,0) {};
	\node [wb] at (8,0) {};
	\node [wb] at (7,0) {};
	\node [wb] at (6,0) {};
	\node [wb] at (5,0) {};
	\node [wb] at (4,0) {};
	\node [wb] at (3,0) {};
	\node [wb] at (2,0) {};
	\node [wb] at (1,0) {};
	\node [wb] at (0,0) {};
	\node [wb] at (-1,0) {};
	\node [wb] at (-2,0) {};
	\node [bb] at (-3,0) {};
	\node [bb] at (-4,0) {};
	\node [bb] at (-5,0) {};
	\node [bb] at (-6,0) {};
	\node [bb] at (-7,0) {};
	\node [bb] at (-8,0) {};
	\node [bb] at (-9,0) {};
				\draw (-10,0) node[]{$\ldots$};

	\node [wb] at (11,1) {};
	\node [wb] at (10,1) {};
	\node [wb] at (9,1) {};
	\node [wb] at (8,1) {};
	\node [wb] at (7,1) {};
	\node [wb] at (6,1) {};
	\node [wb] at (5,1) {};
	\node [wb] at (4,1) {};
	\node [wb] at (3,1) {};
	\node [wb] at (2,1) {};
	\node [wb] at (1,1) {};
	\node [wb] at (0,1) {};
	\node [bb] at (-1,1) {};
	\node [bb] at (-2,1) {};
	\node [bb] at (-3,1) {};
	\node [bb] at (-4,1) {};
	\node [bb] at (-5,1) {};
	\node [bb] at (-6,1) {};
	\node [bb] at (-7,1) {};
	\node [bb] at (-8,1) {};
	\node [bb] at (-9,1) {};
				\draw (-10,1) node[]{$\ldots$};
	
	\node [wb] at (11,2) {};
	\node [wb] at (10,2) {};
	\node [wb] at (9,2) {};
	\node [wb] at (8,2) {};
	\node [wb] at (7,2) {};
	\node [wb] at (6,2) {};
	\node [wb] at (5,2) {};
	\node [wb] at (4,2) {};
	\node [wb] at (3,2) {};
	\node [wb] at (2,2) {};
	\node [wb] at (1,2) {};
	\node [bb] at (0,2) {};
	\node [bb] at (-1,2) {};
	\node [bb] at (-2,2) {};
	\node [bb] at (-3,2) {};
	\node [bb] at (-4,2) {};
	\node [bb] at (-5,2) {};
	\node [bb] at (-6,2) {};
	\node [bb] at (-7,2) {};
	\node [bb] at (-8,2) {};
	\node [bb] at (-9,2) {};
				\draw (-10,2) node[]{$\ldots$};
				
					\node [wb] at (11,3) {};
	\node [wb] at (10,3) {};
	\node [wb] at (9,3) {};
	\node [wb] at (8,3) {};
	\node [wb] at (7,3) {};
	\node [wb] at (6,3) {};
	\node [wb] at (5,3) {};
	\node [wb] at (4,3) {};
	\node [wb] at (3,3) {};
	\node [wb] at (2,3) {};
	\node [wb] at (1,3) {};
	\node [bb] at (0,3) {};
	\node [bb] at (-1,3) {};
	\node [bb] at (-2,3) {};
	\node [bb] at (-3,3) {};
	\node [bb] at (-4,3) {};
	\node [bb] at (-5,3) {};
	\node [bb] at (-6,3) {};
	\node [bb] at (-7,3) {};
	\node [bb] at (-8,3) {};
	\node [bb] at (-9,3) {};
				\draw (-10,3) node[]{$\ldots$};
				
					\node [wb] at (11,4) {};
	\node [wb] at (10,4) {};
	\node [wb] at (9,4) {};
	\node [rb] at (8,4) {};
	\node [wb] at (7,4) {};
	\node [wb] at (6,4) {};
	\node [wb] at (5,4) {};
	\node [wb] at (4,4) {};
	\node [bb] at (3,4) {};
	\node [bb] at (2,4) {};
	\node [bb] at (1,4) {};
	\node [bb] at (0,4) {};
	\node [bb] at (-1,4) {};
	\node [bb] at (-2,4) {};
	\node [bb] at (-3,4) {};
	\node [bb] at (-4,4) {};
	\node [bb] at (-5,4) {};
	\node [bb] at (-6,4) {};
	\node [bb] at (-7,4) {};
	\node [bb] at (-8,4) {};
	\node [bb] at (-9,4) {};
				\draw (-10,4) node[]{$\ldots$};
				
				\draw[dashed](-6.5,-0.5)--node[]{}(-6.5,4.5);\draw (-10,2) node[]{$\ldots$};
				\draw[dashed](0.5,-0.5)--node[]{}(0.5,4.5);\draw (-10,2) node[]{$\ldots$};
				\draw[dashed](7.5,-0.5)--node[]{}(7.5,4.5);\draw (-10,2) node[]{$\ldots$};				
	\end{tikzpicture}
\end{center}
 The new multicharge is  ${\bf s}^l=(-3,-1,0,1,3)$

            \begin{center}
\begin{tikzpicture}[scale=0.5, bb/.style={draw,circle,fill,minimum size=2.5mm,inner sep=0pt,outer sep=0pt}, rb/.style={draw,circle,fill=red,minimum size=2.5mm,inner sep=0pt,outer sep=0pt}, wb/.style={draw,circle,fill=white,minimum size=2.5mm,inner sep=0pt,outer sep=0pt}]
	
	\node [] at (11,-1) {10};
	\node [] at (10,-1) {9};
	\node [] at (9,-1) {8};
	\node [] at (8,-1) {7};
	\node [] at (7,-1) {6};
	\node [] at (6,-1) {5};
	\node [] at (5,-1) {4};
	\node [] at (4,-1) {3};
	\node [] at (3,-1) {2};
	\node [] at (2,-1) {1};
	\node [] at (1,-1) {0};
	\node [] at (0,-1) {-1};
	\node [] at (-1,-1) {-2};
	\node [] at (-2,-1) {-3};
	\node [] at (-3,-1) {-4};
	\node [] at (-4,-1) {-5};
	\node [] at (-5,-1) {-6};
	\node [] at (-6,-1) {-7};
	\node [] at (-7,-1) {-8};
	\node [] at (-8,-1) {-9};
	\node [] at (-9,-1) {-10};
			\draw (-10,-1) node[]{$\ldots$};

	\node [wb] at (11,0) {};
	\node [wb] at (10,0) {};
	\node [wb] at (9,0) {};
	\node [wb] at (8,0) {};
	\node [wb] at (7,0) {};
	\node [wb] at (6,0) {};
	\node [wb] at (5,0) {};
	\node [wb] at (4,0) {};
	\node [wb] at (3,0) {};
	\node [wb] at (2,0) {};
	\node [wb] at (1,0) {};
	\node [wb] at (0,0) {};
	\node [wb] at (-1,0) {};
	\node [wb] at (-2,0) {};
	\node [bb] at (-3,0) {};
	\node [bb] at (-4,0) {};
	\node [bb] at (-5,0) {};
	\node [bb] at (-6,0) {};
	\node [bb] at (-7,0) {};
	\node [bb] at (-8,0) {};
	\node [bb] at (-9,0) {};
				\draw (-10,0) node[]{$\ldots$};

	\node [wb] at (11,1) {};
	\node [wb] at (10,1) {};
	\node [wb] at (9,1) {};
	\node [wb] at (8,1) {};
	\node [wb] at (7,1) {};
	\node [wb] at (6,1) {};
	\node [rb] at (5,1) {};
	\node [wb] at (4,1) {};
	\node [wb] at (3,1) {};
	\node [wb] at (2,1) {};
	\node [wb] at (1,1) {};
	\node [wb] at (0,1) {};
	\node [wb] at (-1,1) {};
	\node [bb] at (-2,1) {};
	\node [bb] at (-3,1) {};
	\node [bb] at (-4,1) {};
	\node [bb] at (-5,1) {};
	\node [bb] at (-6,1) {};
	\node [bb] at (-7,1) {};
	\node [bb] at (-8,1) {};
	\node [bb] at (-9,1) {};
				\draw (-10,1) node[]{$\ldots$};
	
	\node [wb] at (11,2) {};
	\node [wb] at (10,2) {};
	\node [wb] at (9,2) {};
	\node [wb] at (8,2) {};
	\node [wb] at (7,2) {};
	\node [wb] at (6,2) {};
	\node [wb] at (5,2) {};
	\node [wb] at (4,2) {};
	\node [wb] at (3,2) {};
	\node [wb] at (2,2) {};
	\node [wb] at (1,2) {};
	\node [bb] at (0,2) {};
	\node [bb] at (-1,2) {};
	\node [bb] at (-2,2) {};
	\node [bb] at (-3,2) {};
	\node [bb] at (-4,2) {};
	\node [bb] at (-5,2) {};
	\node [bb] at (-6,2) {};
	\node [bb] at (-7,2) {};
	\node [bb] at (-8,2) {};
	\node [bb] at (-9,2) {};
				\draw (-10,2) node[]{$\ldots$};
				
					\node [wb] at (11,3) {};
	\node [wb] at (10,3) {};
	\node [wb] at (9,3) {};
	\node [wb] at (8,3) {};
	\node [wb] at (7,3) {};
	\node [wb] at (6,3) {};
	\node [wb] at (5,3) {};
	\node [wb] at (4,3) {};
	\node [wb] at (3,3) {};
	\node [wb] at (2,3) {};
	\node [wb] at (1,3) {};
	\node [bb] at (0,3) {};
	\node [bb] at (-1,3) {};
	\node [bb] at (-2,3) {};
	\node [bb] at (-3,3) {};
	\node [bb] at (-4,3) {};
	\node [bb] at (-5,3) {};
	\node [bb] at (-6,3) {};
	\node [bb] at (-7,3) {};
	\node [bb] at (-8,3) {};
	\node [bb] at (-9,3) {};
				\draw (-10,3) node[]{$\ldots$};
				
					\node [wb] at (11,4) {};
	\node [wb] at (10,4) {};
	\node [wb] at (9,4) {};
	\node [rb] at (8,4) {};
	\node [wb] at (7,4) {};
	\node [wb] at (6,4) {};
	\node [wb] at (5,4) {};
	\node [wb] at (4,4) {};
	\node [bb] at (3,4) {};
	\node [bb] at (2,4) {};
	\node [bb] at (1,4) {};
	\node [bb] at (0,4) {};
	\node [bb] at (-1,4) {};
	\node [bb] at (-2,4) {};
	\node [bb] at (-3,4) {};
	\node [bb] at (-4,4) {};
	\node [bb] at (-5,4) {};
	\node [bb] at (-6,4) {};
	\node [bb] at (-7,4) {};
	\node [bb] at (-8,4) {};
	\node [bb] at (-9,4) {};
				\draw (-10,4) node[]{$\ldots$};
				
				\draw[dashed](-6.5,-0.5)--node[]{}(-6.5,4.5);\draw (-10,2) node[]{$\ldots$};
				\draw[dashed](0.5,-0.5)--node[]{}(0.5,4.5);\draw (-10,2) node[]{$\ldots$};
				\draw[dashed](7.5,-0.5)--node[]{}(7.5,4.5);\draw (-10,2) node[]{$\ldots$};				
	\end{tikzpicture}
\end{center}
   The new multicharge is  ${\bf s}^l=(-2,-2,0,1,3)$, as desired.


\begin{thebibliography}{99}    


\addcontentsline{toc}{chapter}{Bibliography}
%
%
\bibitem{Ar}
\textsc{S.~Ariki},
Representations of quantum algebras and combinatorics of Young tableaux, 
University Lecture Series, 26. American Mathematical Society, Providence, RI, 2002. 

%
%
%
%
%
%
%
%
%
%
%
%
%
%
%
%
%
%
%
%
%
%
%
%
%
%
%
%
%
%
%
%
%
%
%
%
%
%
%
%
%

%

\bibitem{CR}
\textsc{J.Chuang} and  \textsc{R.Rouquier}, 
Derived equivalences for symmetric groups and $sl_2$-categorification,
Annals Math., 245--298, Vol. 167 (2008). 


\bibitem{AD}
\textsc{A.Dell'Arciprete},
Equivalence of decomposition matrices for blocks of Ariki-Koike algebras,
Preprint, {\bf arXiv:2301.05153v2.}

\bibitem{Fa} \textsc{M. Fayers}, 
Weights of multipartitions and representations of Ariki-Koike algebras,
Adv. Math. 206 (2006), no. 1, 112--144.


\bibitem{GJ}
\textsc{M.Geck} and  \textsc{N.Jacon}, Representations of Hecke algebras at roots of unity. Algebra and Applications, 
 15. Springer-Verlag London, Ltd., London, 2011. 
 
 
 \bibitem{LG}   \textsc{T.Gerber} and   \textsc{C.Lecouvey},
Duality and bicrystals on infinite binary matrices,
Annales de l'Institut Henri Poincar\'e D., Comb. Phys. Interact. 10 (2023), 715--779

%
%
%
%
\bibitem{J} \textsc{N. Jacon},
An algorithm for the computation of the decomposition matrices for Ariki-Koike algebras,
Journal of Algebra (section Computational Algebra) 292 (2005), 100-109,

\bibitem{Jsize}  \textsc{N. Jacon},
Size of multipartitions through level-rank duality, 
Preprint, {\bf https://hal.science/hal-04781479v1}.

\bibitem {JL0} \textsc{N. Jacon} and \textsc{C. Lecouvey}, Crystal isomorphisms for irreducible
highest weight $\mathcal{U}_{v}(\widehat{\mathfrak{sl}}_{e})$-modules of
higher level, 
Algebras and Representation theory 13, 467-489, 2010.

\bibitem {JL3} \textsc{N. Jacon} and \textsc{C. Lecouvey},
Crystal Isomorphisms and wall-crossing maps for rational Cherednik algebras,
Transformation Groups, vol 23 page 101-117, (2018)

\bibitem {JL} \textsc{N. Jacon} and \textsc{C. Lecouvey},
Cores of Ariki-Koike algebras, 
Doc. Math. 26, 103-124 (2021)







\bibitem{J}  \textsc{R.Jenkins}, 
Representations of Rational Cherednik Algebras: Koszulness and Localisation,
PhD thesis, University of Edinburgh. 

\bibitem{L} \textsc{I.Losev},
Supports of simple modules in cyclotomic Cherednik categories,
Advances in Mathematics, 
Volume 377, 22 January 2021, 107491

\bibitem{Ly}  \textsc{S. Lyle}, 
Decomposition numbers for Rouquier blocks of Ariki-Koike algebras I,
Journal of Combinatorial Algebra, to appear, 2024.  

\bibitem{LM}  \textsc{S. Lyle} and \textsc{A.Mathas},
Blocks of cyclotomic Hecke algebras,
Adv. Math. 216 (2007), no. 2, 854--878.



\bibitem{Kashi}
\textsc{Kashiwara, M.},
\newblock Crystal bases of modified quantized enveloping algebra, 
\newblock Duke Math. J., {\bf 73} (1994), 383--413. 




\bibitem{LQ} 
\textsc{Y. Li} and \textsc{X. Qi},
Representation type of blocks of cyclotomic Hecke algebras of type $G(r,1,n)$,
Preprint, {\bf arxiv :2202.06107}

\bibitem{LiT} 
\textsc{Y. Li} and \textsc{M. Tan},
Cores and weights of multipartitions and blocks of Ariki-Koike algebras,
Preprint {\bf arXiv:2408.10626  }

\bibitem{Sc}
\textsc{J. Scopes},
 Cartan matrices and Morita equivalence for blocks of the symmetric groups, 
 J. Algebra 142 (1991) 441--455.


\bibitem{U}
\textsc{D. Uglov},
\newblock {Canonical bases of higher-level $q$-deformed Fock spaces and
  Kazhdan-Lusztig polynomials},
 {\em Progr. Math.}, 191:249--299, 1999.



%
%
%
%
%
%
%
%
%
%
%
%

\bibitem{We}
\textsc{B. Webster},
Decomposition numbers for Rouquier blocks of Ariki-Koike algebras I, 
to appear in Journal of Combinatorial Algebra.

\bibitem{Y}  \textsc{X. Yvonne}, Bases canoniques d'espaces de Fock de niveau sup\'erieur, 
th\`ese de doctorat, Universit\'e de Caen, 2005, \emph{https://theses.hal.science/tel-00137705}. 


\end{thebibliography}
\end{document}